\newcommand{\interpolimage}{\mathcal{U}} 
\newcommand{\imagecurve}{u} 
\newcommand{\image}{u} 
\newcommand{\imagevec}{\mathbf{u}} 
\newcommand{\imageinnervec}{{ \mathbf{\hat u}}} 
\newcommand{\imageinnervectest}{{ \mathbf{\hat v}}} 
\newcommand{\defvec}{\mathbf{\Phi}}
\newcommand{\WEner}{{\mathcal{W}}}
\newcommand{\Image}{U} 
\newcommand{\basisfct}{\Theta}
\newcommand{\Right}{{\mathbf{R}}}
\newcommand{\U}{{\mathbf{U}}}
\newcommand{\Matrix}{A}
\newcommand{\MatrixBF}{\mathbf{A}}
\newcommand{\ESPhi}[1]{{\mathbf{E}^D_{#1}}}
\newcommand{\V}{{\mathcal{V}}}
\newcommand{\W}{{\mathcal{W}^D}}
\newcommand{\set}[1]{{\{{#1}\}}}
\newcommand{\setof}[2]{{\{{#1}\,|\,{#2}\}}}
\newcommand{\metric}{{g}}
\renewcommand{\div}{{\mathrm{div}}}
\newcommand{\R}{{\mathbb{R}}}
\newcommand{\N}{{\mathbb{N}}}
\DeclareMathOperator{\argmin}{argmin}
\renewcommand{\d}{{\,\mathrm{d}}}
\newcommand{\tr}{{\mathrm{tr}}}
\newcommand{\Id}{{\mathds{1}}}
\newcommand{\compdom}{{D}}
\newcommand{\Ih}{{\mathcal{I}_h}}
\newcommand{\mass}{{\mathbf{M}}}
\newcommand{\stiff}{{\mathbf{S}}}
\newcommand\restr[2]{{\left.\kern-\nulldelimiterspace #1 \vphantom{\big|}\right|_{#2}}}
\newcommand{\funcnorm}[2]{\left\|#1\right\|_{#2}}
\newcommand{\pathenergy}{{\mathbf{E}}}
\newcommand{\continuouspathenergy}{\mathcal{E}}
\newcommand{\imagespace}{L^2(D)}
\newcommand{\admset}{\mathcal{A}}
\newcommand{\notinclude}[1]{}
\def\Xint#1{\mathchoice
{\XXint\displaystyle\textstyle{#1}}
{\XXint\textstyle\scriptstyle{#1}}
{\XXint\scriptstyle\scriptscriptstyle{#1}}
{\XXint\scriptscriptstyle\scriptscriptstyle{#1}}
\!\int}
\def\XXint#1#2#3{{\setbox0=\hbox{$#1{#2#3}{\int}$}
\vcenter{\hbox{$#2#3$}}\kern-.5\wd0}}
\def\dashint{\Xint-}
\newcommand{\beq}{\begin{equation*}}
\newcommand{\eeq}{\end{equation*}}
\newcommand{\beqn}{\begin{equation}}
\newcommand{\eeqn}{\end{equation}}
\newcommand{\beqa}{\begin{eqnarray*}}
\newcommand{\eeqa}{\end{eqnarray*}}
\newcommand{\beqan}{\begin{eqnarray}}
\newcommand{\eeqan}{\end{eqnarray}}
\newcounter{counter}
\DeclareRobustCommand\onedot{\futurelet\@let@token\@onedot}
\def\@onedot{\ifx\@let@token.\else.\null\fi\xspace}
\def\eg{\emph{e.g}\onedot} 
\def\ie{\emph{i.e}\onedot} 
\def\cf{\emph{cf}\onedot}
\theoremstyle{plain}\newtheorem{theorem}{Theorem}[section]
\newtheorem{proposition}[theorem]{Proposition}
\theoremstyle{definition}
\newtheorem{definition}{Definition}[section]
\theoremstyle{remark}
\newtheorem*{remark}{Remark}
\begin{document}

\title{Time Discrete Geodesic Paths in the Space of Images}
\author{B. Berkels, A. Effland, M. Rumpf}
\maketitle

\begin{abstract}
In this paper the space of images is considered as a Riemannian manifold using the metamorphosis approach 
\cite{MiYo01,TrYo05a,TrYo05}, where the underlying Riemannian metric simultaneously measures
the cost of image transport and intensity variation. 
A robust and effective variational time discretization of geodesics paths is proposed. 
This requires to minimize a discrete path energy consisting of a sum of consecutive image matching functionals 
over a set of image intensity maps and pairwise matching deformations.
For square-integrable input images the existence of discrete, connecting geodesic paths 
defined as minimizers of this variational problem is shown. Furthermore, 
$\Gamma$-convergence of the underlying discrete path energy to the continuous path energy is proved.
This includes a diffeomorphism property for the induced transport and the existence of a
square-integrable weak material derivative in space and time.
A spatial discretization via finite elements combined with an alternating descent scheme in the set of image intensity maps and the set of matching deformations is presented to approximate discrete geodesic paths numerically.
Computational results underline the efficiency of the proposed approach
and demonstrate important qualitative properties.
\end{abstract}

\section{Introduction}\label{sec:intro}
The study of spaces of shapes from the perspective of a Riemannian manifold  allows to transfer many important  
concepts from classical geometry to these usually infinite-dimensional spaces.
During the past decade, this Riemannian approach had an increasing impact on the development of new methods in 
computer vision and imaging, ranging from shape morphing and modeling, \eg \cite{KiMiPo07}, and shape statistics, \eg 
\cite{FlLuPi04}, to computational anatomy \cite{BeMiTrYo02}. A variety of Riemannian shape spaces has been investigated 
in the literature. Some of them are finite-dimensional and consider polygonal curves or triangulated surfaces as shapes 
\cite{KiMiPo07,LiShDi10}, but most approaches deal with infinite-dimensional spaces of shapes. Prominent examples with a full-fledged geometric theory are spaces of planar curves with curvature-based metric \cite{MiMu04}, 
elastic metric \cite{SrJaJo06} or Sobolev-type metric \cite{ChKePo05,MiMu07,SuYeMe07}.
The concept of optimal transport was used to study the space of images, where image intensity functions are considered as probability measures, \eg Zhang et al. \cite{ZhYaHa07} minimize the 
Monge-Kantorovich functional $\int_{\!\compdom}\!|\psi(x)\!-\!x|^2\rho_0(x)\d x$ over all mass preserving
mappings $\psi\!:\!\!\compdom\!\!\to\!\!\compdom$.
Benamou and Brenier \cite{BeBr00} used a flow reformulation of optimal transport, which nicely fits into the Riemannian context.

For only a few nontrivial application-oriented Riemannian spaces geodesic paths can be computed in closed form (\eg \cite{YoMiSh08,SuMeSo11}), else the system of geodesic ODEs has to be solved using numerical time stepping schemes (\eg \cite{KlSrMi04,BeMiTr05}).
Alternatively, geodesic paths connecting shapes 
can also be approximated via the minimization of discretized path length \cite{ScClCr06}
or path energy functionals \cite{FuJuScYa09,WiBaRu10}. In this paper, we will develop such a variational time discretization on the space of images using the metamorphosis approach proposed by Trouv\'e and Younes \cite{TrYo05,TrYo05a,HoTrYo09}. This approach is a generalization of the flow of diffeomorphism approach 
initiated by Dupuis, Grenander and Miller \cite{DuGrMi98}. 

The concept of variational time discretization is a powerful tool  in the discretization of gradient flows
and for Hamiltonian mechanical systems. The analog of the time discrete path energy considered here is a discrete action sum.
For a historic account we refer to  \cite{HaLuWa06}.  Numerical analysis was exploited 
from the $\Gamma$-convergence perspective in \cite{MuOr04}, and from the ODE-discretization perspective under the name of variational integrators in \cite{LeMaOr04,ObJuMa10}. Thereby, the time continuous Lagrangian on some time interval is
replaced by a time discrete functional  related to our functional $\WEner$ and defined directly on configuration variables and not involving momentum variables. 

Instead of discretizing the underlying flow and incorporating the target configuration at the end time via a constraint, the variational discretization is based on
the direct minimization of a discrete path energy subject to data given at the initial and the end time.  This approach turned out to be very stable and robust, and 
even for very small numbers of time steps one obtains qualitatively good results. Furthermore, proceeding from coarse to fine time discretization, an efficient  
cascadic  minimization strategy can be implemented. 
In the context of shape spaces, this concept has already been used 
in the space of viscous objects  \cite{FuJuScYa09,WiBaRu10, RuWi12}, but without a rigorous mathematical foundation. In \cite{RuWi12b}, a discrete geodesic calculus on finite- and on certain infinite-dimensional shape spaces with the structure of a Hilbert manifolds was developed and a full-fledged convergence analysis could be established. This theory immediately applies for instance to the (finite-dimensional) Riemannian manifold of discrete shells \cite{HeRuWa12,HeRuSc14}. In this paper, we expand part of this theory to the metamorphosis model, which lacks a Hilbert manifold structure. 

In what follows, we will briefly review both the flow of diffeomorphism and the metamorphism approaches
as a basis for the discussion of our time discrete metamorphosis model and the $\Gamma$-convergence analysis to be presented in this paper.

\paragraph{Flow of diffeomorphism} Here, we give a very short exposition and refer to \cite{DuGrMi98,BeMiTr05,JoMi00,MiTrYo02} for more details.
Following the classical paradigm by Arnold \cite{Ar66a,ArKh98}, one studies the temporal change of image intensities from the perspective of a family of diffeomorphisms $(\psi(t))_{t\in [0,1]}: \bar D \to \R^d$ on the closure of the image domain $D\subset \R^d$ for $d=2,3$ describing a flow, which transports image intensities along particle paths. In what follows, we suppose that $D$ is a bounded domain with Lipschitz boundary. 
A path energy 
\beq
\pathenergy[(\psi(t))_{t\in [0,1]}] = \int^1_0\int_D L[v(t),v(t)] \d x \d t
\eeq
is associated which each path $(\psi(t))_{t\in [0,1]}$ in the space of images, where $v(t) = \dot \psi(t) \circ \psi^{-1}(t)$ represents the Eulerian velocity of the underlying flow and $L$ is a quadratic form corresponding to a higher order elliptic operator. 
Physically, the metric $g_{\psi(t)}(\dot \psi(t),\dot \psi(t)) = \int_D L[v(t),v(t)] \d x$ describes the viscous dissipation in a multipolar fluid model as investigated by Ne\v{c}as and \v{S}ilhav\'y \cite{NeSi91}.
From this perspective, a suitable choice for the viscous dissipation is given by a combination of a classical Newtonian flow and a simple multipolar dissipation model, namely
\beqn
L[v(t),v(t)] :=  \tfrac{\lambda}{2} (\tr\varepsilon[v])^2+ \mu\tr(\varepsilon[v]^2) + \gamma |D^m v|^2\,,
\label{definitionEllipticOperator}
\eeqn
where $\varepsilon[v]= \frac12(\nabla v +  \nabla v^T)$, $m>1+\frac{d}{2}$ and $\lambda, \, \mu,\, \gamma >0$ (throughout this paper gradient $\nabla$, divergence $\div$, and higher order derivatives $D^m$ are always evaluated with respect to the spatial variables). The first two terms of the integrand represent the usual dissipation density in a Newtonian fluid, whereas the third term represents a higher order measure for friction. Under suitable assumptions on $L$ it is shown in \cite[Theorem 2.5]{DuGrMi98} that paths of finite energy, which connect two diffeomorphisms $\psi(0)=\psi_A$ and $\psi(1)=\psi_B$, are indeed one-parameter families of diffeomorphisms. Furthermore, for any minimizing sequence of paths a subsequence converges uniformly to an energy minimizing path, in particular the minimizing path solves $\dot \psi(t,\cdot) = v(t,\psi(t,\cdot))$ for every $t\in [0,1]$, where $v$ is the energy minimizing velocity
(cf. \cite[Theorem 3.1]{DuGrMi98}).
Given two image intensity functions 
$\image_A,\image_B\in\imagespace$, an associated geodesic path is a family of images $\imagecurve= (\imagecurve(t): D \to \R)_{t\in [0,1]}$ with 
$\imagecurve(0) = \image_A$ and $\imagecurve(1) = \image_B$, which minimizes the path energy. 
The associated flow of images is given by $u(t) = \image_A \circ \psi^{-1}(t)$.
In medical applications \cite{BeMiTrYo02}, the diffeomorphisms represent deformations of anatomic reference structures described by some image $\image_A$. Thus, each diffeomorphism $\psi(t):\bar D \to \R^d$ for $t\in [0,1]$ represents a particular anatomic configuration or shape of these structures.
Let us remark that this model is obviously invariant under rigid body motions, \ie rigid body motions are generated by motion fields $v$ with spatially constant, skew symmetric Jacobian, for which $\varepsilon[v]=0$ and $D^m v=0$.

\paragraph{Metamorphosis}
The metamorphosis approach was first proposed by  Miller and Younes \cite{MiYo01} and comprehensively analyzed by Trouv\'e and Younes \cite{TrYo05}. 
It allows in addition for image intensity variations along motion paths. Conceptually and under the assumption that the family of images $u$ is sufficiently smooth, the associated metric for some parameter $\delta >0$ can be written as 
\beq
\metric(\dot \imagecurve, \dot \imagecurve) = \min_{v:\bar D \to \R^d} \int_D L[v,v]  + \frac1\delta (\dot \imagecurve + \nabla \imagecurve \cdot v)^2 \d x
\eeq
and induces the path energy $\pathenergy[\imagecurve] = \int_0^1 \metric(\dot \imagecurve(t), \dot \imagecurve(t)) \d t\,$.
Let $\frac{D}{\partial t} \imagecurve = \dot \imagecurve + \nabla \imagecurve \cdot v$ denote the material derivative of $\imagecurve$.  Obviously, the same temporal change $\dot \imagecurve(t)$ in the image intensity can be implied by different motion fields $v(t)$ and different associated material derivatives $\frac{D}{\partial t} \imagecurve$, \ie $\dot \imagecurve(t) = \frac{D}{\partial t} \imagecurve - \nabla \imagecurve \cdot v$. 
In fact, one introduces a nonlinear geometric structure on the space of images by considering 
equivalence classes of pairs $(v,\frac{D}{\partial t} \imagecurve)$ as tangent vectors in the space of images,
where such pairs are supposed to be equivalent iff they imply the same temporal change $\dot \imagecurve$.
Hence, to evaluate the metric on such tangent vectors one has to minimize over the elements of the equivalence class and computing a geodesic path requires to optimize both the temporal change of the image intensity and the motion field. Thereby, the first term $L[v,v]$ reflects the cost of the underlying transport
and the term $\frac1\delta (\frac{D}{\partial t} \imagecurve)^2$ penalizes the variation of the image intensity along motion paths. 

However, typically images are not smooth and paths in image space are neither smooth in time nor in space.
Thus, the classical notion of the material derivative $\dot \imagecurve + \nabla \imagecurve \cdot v$ is not well-defined. In \cite{TrYo05a} Trouv\'e and Younes established a suitable  generalization of the above nonlinear geometric structure on $L^2(D):=L^2(D, \R)$, which is used as the space of images,
based on a proper notion of weak material derivatives. 
Here, we recall the fundamental ingredients of this approach. In fact, for  $v \in L^2((0,1),W^{m,2}(D,\R^d)\cap W^{1,2}_0(D,\R^d))$ the function $z \in L^2((0,1),L^2(D))$ is defined as a weak material derivative of a function $\imagecurve \in L^2((0,1),L^2(D))$ if 
\begin{equation}
\int_0^1 \int_D \eta z \d x \d t = -  \int_0^1 \int_D (\partial_t \eta + \div (v \eta)) \imagecurve \d x \d t
\label{eq:definitionMaterialDerivative}
\end{equation}
for $\eta\in C^{\infty}_c((0,1)\times D)$. Here, $W^{m,2}$ denotes the usual Sobolev space of functions with square-integrable derivatives up to order $m$, and $W^{1,2}_0$ is the 
space of functions in $W^{1,2}$ with vanishing trace on the boundary. 
In terms of Riemannian manifolds, Trouv\'e and Younes equipped
the space of images $L^2(D)$ with the following nonlinear structure: 
Let
\[
N_{\image}=\left\{ w=( v,z)\in W:\ \int_D z \eta + \image \div(\eta v) \d x =0\ \forall\eta\in C^{\infty}_c(D)\right\}\,.
\]
For $W = (W^{m,2}(D,\R^d)\cap W^{1,2}_0(D,\R^d)) \times L^2(D)$ 
the tangent space at $\image \in L^2(D)$ is defined as 
$T_\image L^2(D)= \{\image\}\times W / N_{\image}$ and elements in this tangent space, which are equivalence classes, are denoted by $(u,\overline{(v,z)})$.
The tangent bundle is given by
\beq
T L^2(D)=\bigcup_{\image\in L^2(D)} T_{\image} L^2(D).
\eeq
Furthermore, let $\pi(u,\overline{(v,z)}) = u$ be the projection onto the image manifold.
Indeed, this is a weak formulation of the above notion of a tangent space as an equivalence class.
Following the usual Riemannian manifold paradigm, a curve $\imagecurve \in C^0([0,1],L^2(D))$ in the space of images is called \textit{continuously differentiable},
iff there is a continuous curve $t\mapsto w(t)=(v(t),z(t))$ in $W$ such that for any $\eta\in C^{\infty}_c(D)$ the mapping
$t\mapsto \int_D \imagecurve(t) \eta \d x$ is continuously differentiable (denoted by $\imagecurve \in C^1([0,1],L^2(D))$) and
 \beqn\label{eq:contpath}
\frac{\mathrm{d}}{\mathrm{d} t} \left(\int_D \imagecurve(t) \eta \d x\right) 
= \int_D z(t) \eta + \imagecurve(t) \div(\eta v(t)) \d x\,.
\eeqn
In fact, for a curve $t\to \gamma(t) = \left(\imagecurve(t), \overline{(v(t),z(t))}\right)$  in $T L^2(D)$ the function
$z$ is the (weak) material derivative if \eqref{eq:contpath} holds for all test functions 
$\eta \in C^\infty_c([0,1] \times D)$  and all times $t\in (0,1)$. Furthermore, a curve 
$\imagecurve \in C^0([0,1],L^2(D))$ is defined to be \textit{regular} in the space of images (denoted by $\imagecurve\in H^1((0,1),L^2(D))$), if there exists a measurable path $\gamma:[0,1]\rightarrow TL^2(D)$ with $\pi(\gamma)=\imagecurve$ and bounded $L^2$-norm in space and time, such that
\beqn\label{eq:weakmaterial1}
-\int_0^1 \int_D \imagecurve \partial_t \eta \d x \d t = 
\int_0^1\int_D z \eta + \imagecurve \div(\eta v) \d x \d t
\eeqn
for all $\eta\in C^{\infty}_c((0,1)\times D)$. 
In fact, a continuously differentiable path $\imagecurve \in C^1([0,1],L^2(D))$ is always regular, \ie
$\imagecurve \in  H^1((0,1),L^2(D))$ (cf. \cite[Proposition 4]{TrYo05a}).
Now, for a regular path 
$\imagecurve \in H^1((0,1),L^2(D))$  and for the quadratic form $L[v,v]$ being coercive on 
$W^{m,2}(D,\R^d)\cap W^{1,2}_0(D,\R^d)$ (which can be easily verified for $L$ given in \eqref{definitionEllipticOperator}
using Korn's Lemma) one can rigorously define the path energy
\begin{equation}
\continuouspathenergy[\imagecurve] = \int_0^1 \inf_{\overline{(v,z)}\in T_{\imagecurve(t)} L^2(D)}  \int_D L[v,v]  + \frac1\delta z^2 \d x \d t\,.
\label{eq:DefinitionPathenergy}
\end{equation}
In \cite{TrYo05a}, Trouv\'e and Younes proved the existence of minimizing paths
for given boundary data in time. Adapted to our notion, they have shown that for
$m > 1+\frac{d}{2}$ and $\gamma,\delta>0$ and
given images $\image_A,\image_B\in L^2(D)$ there exists a curve $\imagecurve\in H^1((0,1),L^2(D))$ with $\imagecurve(0)=\image_A$ and
$\imagecurve(1)=\image_B$ such that
\beq
\continuouspathenergy[\imagecurve]=\inf\{\continuouspathenergy[\tilde\imagecurve]:\ \tilde\imagecurve\in H^1((0,1),L^2(D)),\
\tilde\imagecurve(0)=\image_A,\ \tilde\imagecurve(1)=\image_B\}\,.
\eeq
Moreover, the infimum in \eqref{eq:DefinitionPathenergy} is attained for all $t\in [0,1]$, \ie there exist minimizing $(v,z)\in T_{\imagecurve(t)} L^2(D)$.

The proof relies on the observation that $W^{m,2}(D)\cap W^{1,2}_0(D)$ compactly embeds into $C^{1,\alpha}_0(D)$ for $\alpha < m-1-\tfrac{d}{2}$. 
The existence of a geodesic path then follows from \cite[Theorem 6]{TrYo05a}, whereas the addendum is a consequence of \cite[Theorem 2]{TrYo05a}.

\section{The variational time discretization}\label{section:DiscreteModel}
In what follows, we develop a variational approach for the time discretization of geodesic paths 
in the metamorphosis model. This will be based on a time discrete approximation of the above time continuous 
path energy \eqref{eq:DefinitionPathenergy}. 
In what follows, we suppose that $\gamma,\delta>0$, $m>1+\frac{d}{2}$, and define for arbitrary images 
$\image,\,\tilde \image \in \imagespace$ and for a particular energy density $W$ a \textit{discrete energy} 
\begin{equation}
\WEner[\image,\tilde \image]= \min_{\phi\in\admset} \int_{D} W(D \phi) + \gamma |D^m \phi|^2+\frac{1}{\delta} |\tilde \image \circ \phi-\image|^2 \d x\,,
\label{eq:WEnerDefinition}
\end{equation}
where $\admset$ is the \textit{set of admissible deformations}. 
Throughout  this paper, we make the following assumptions with regard to the energy density function $W$:
\setcounter{counter}{0}
\begin{list}{(W\arabic{counter})}{\usecounter{counter}}
\item $W$ is non-negative and polyconvex,
\item $W(A)\geq\beta_0(\det A)^{-s}-\beta_1$ for $\beta_0,\beta_1,s>0$ and every invertible matrix $A$ with $\det A >0$, $W(A) = \infty$ for $\det A \leq 0$, and 
\item $W$ is sufficiently smooth and the following consistency assumptions with respect to the differential operator $L$ hold true:
$W(\Id) = 0$, $\ DW(\Id) = 0$ and 
\begin{equation*}
\frac12 D^2W(\Id)(B,B)=\frac{\lambda}{2}(\tr B)^2+\mu\tr\!\left(\left(\frac{B+B^T}{2}\right)^2\right)\quad\forall B\in\R^{d,d}\,.
\end{equation*}
\end{list}
Furthermore, the set of admissible deformations is
\begin{equation*}
\admset = \{\phi\in W^{m,2}(D,D): \det D\phi > 0 \text{ a.e. in }D , \phi = \Id\text{ on }\partial D\}\,.
\end{equation*}
Note that we use the symbol $\Id$ both for the identity mapping $x\mapsto x$ and the identity matrix.
The first two assumptions ensure the existence of a minimizing deformation in \eqref{eq:WEnerDefinition} and thus the well-posedness of the discrete energy $\WEner[\image,\tilde \image]$ for $\image,\,\tilde \image \in \imagespace$. 
Note that \cite[Theorem 1]{Ba81} already implies the global invertibility (a.e.) of 
every $\phi\in\admset$ because $\admset\subset W^{1,p}(D)$ for a $p>d$.
The third assumption states that the definition of $\WEner$ is consistent with the underlying dissipation described by the quadratic form $L$. 

Now, we consider discrete curves $\imagevec=(\image_0,\ldots, \image_K) \in(\imagespace)^{K+1}$  in image space
and define a \emph{discrete path energy} as the sum of pairwise matching functionals $\WEner$ evaluated on consecutive images  of these discrete curves as follows 
\begin{equation}\label{eq:pathenergy}
\pathenergy_{K}[\imagevec]:=
 K \sum_{k=1}^{K}\WEner[\image_{k-1},\image_k]\,.
\end{equation}
We refer to \cite{RuWi12b} for the introduction of such a variational time discretization on shape manifolds.
Based on this path energy, we can define \emph{discrete geodesic paths} as follows.
\begin{definition}
Let $\image_A,\image_B\in\imagespace$ and $K\geq 1$. A \textit{discrete geodesic} connecting $\image_A$ and $\image_B$ is a discrete curve
in image space that minimizes  $\pathenergy_{K}$ over all discrete curves
$\imagevec=(\image_0,\ldots,\image_K)\in(\imagespace)^{K+1}$ with
$\image_0 = \image_A$ and $\image_K = \image_B$. 
\end{definition}

Due to the assumption (W3), the energy on the right-hand side of \eqref{eq:WEnerDefinition} scales quadratically in the displacement $\phi-\Id$, which itself is expected to scale linearly in the time step 
$\tau =\frac1K$. This already motivates the coefficient $K$ in front of the discrete path energy. For the rigorous justification, we refer to the proof of 
Theorem \ref{thm:gamma} on the $\Gamma$-convergence estimates.

In general, we want the energy density to fulfill two desirable properties: isotropy and rigid body motion invariance.
A suitable choice for an isotropic and rigid body motion invariant energy density $W$ in the case $d=2$, which fulfills the assumptions (W1-3), is given by 
\begin{equation}
W(D\phi)=
a_1\left(\tr(D\phi^TD\phi)\right)^q+
a_2(\det D\phi)^r
+a_3(\det D\phi)^{-s}
+a_4\
\label{eq:energyDensityExOne}
\end{equation}
with coefficients $a_1=\tfrac{2^{-q} \mu}{q}$,
$a_2=\tfrac{\lambda+\mu-\mu q-\mu s}{r^2+rs}$,
$a_3=\tfrac{\lambda+\mu-\mu q+\mu r}{rs+s^2}$ and
$a_4=\tfrac{\mu \left(q^2-rs-q (1+r-s)\right)-\lambda q}{q r s}$ and $q,r \geq 1$,
which is a special case of an Ogden material.  
Indeed, it is possible to choose for given $\lambda, \mu >0$ 
the parameters $q,\,r,\,s$ in such a way that the resulting coefficients  $a_1$, $a_2$ and $a_3$ are positive.
Obviously, $D\phi^T D \phi$ and $\det D \phi$ are invariant with respect to rotations of the 
observer frame. The third term of the energy density ensures the required response of the energy on strong compression.
Both rigid body motion invariance and also this compression response cannot be realized with a simple quadratic energy density.
For the definition of a corresponding energy density in the case $d=3$, we refer to \cite[Section 4.9/4.10]{Ci97}.

In the discrete path energy, two opposing effects can be observed. For a given discrete curve $\imagevec$ and (minimizing) deformations
$\phi_1,\ldots, \phi_K$, the last term penalizes intensity variations along the discrete motion path 
$(x, \phi_1(x), (\phi_2\circ \phi_1)(x), \ldots, (\phi_K \circ \ldots \circ \phi_1)(x))$, 
whereas the first two terms penalize deviations of the (discrete) flow along these discrete motion paths 
from rigid body motions. We will see that $K (\image_k \circ \phi_k-\image _{k-1})$ 
reflects a time discrete material derivative along the above discrete motion path, whereas the first two terms 
represent a discrete dissipation density.
Let us remark that minimizers of the discrete energy reversed in order are in general no minimizers of $\pathenergy_{K}$ 
for the reversed boundary constraint $\image_0 = \image_B$ and $\image_K = \image_A$. Only asymptotically in the limit for $K\to \infty$, we will obtain this symmetry based on our convergence theory below.

\section{Well-posedness of the discrete path energy and existence of discrete geodesics}
\label{sec:wellposedness}
In this section, we will show that for images $\image,\,\tilde \image \in \imagespace$ a minimizing deformation in the definition of 
$\WEner[\image,\,\tilde \image]$ exists, which renders the definition of the discrete path energy well-posed. Furthermore, we will prove existence of a
minimizing path $\imagevec$ of the discrete path energy $\pathenergy_{K}$ and thereby establish the existence of a discrete geodesic.
 
\begin{proposition}[Well-posedness of $\WEner$]\label{wellPosednessWEner}
Under the above assumptions (W1-2) and for $\image,\tilde\image\in\imagespace$, there
exists a deformation $\phi\in \admset$ depending on $\image$ and $\tilde\image$ such that 
$\WEner[\image,\tilde\image]=\W[\image,\tilde\image,\phi]$, where
\begin{equation*}
\W[\image,\tilde\image,\phi]:=\int_D W(D \phi)+\gamma |D^m \phi|^2+\frac1{\delta}|\tilde\image \circ \phi-\image|^2 \d x\,.
\end{equation*}
Moreover, $\phi$ is a diffeomorphism and $\phi^{-1}\in C^{1,\alpha}(\bar D)$ for $\alpha\in(0,m-1-\frac{d}{2})$.
\end{proposition}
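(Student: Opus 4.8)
\emph{Proof sketch.} The plan is to run the direct method of the calculus of variations on $\W[\image,\tilde\image,\cdot]$ over $\admset$, and then to upgrade the resulting minimizer to a diffeomorphism using the Sobolev embedding together with a global invertibility result. First I would fix a minimizing sequence $(\phi_j)_j\subset\admset$; the class is nonempty since $\Id\in\admset$ has finite energy (note $\det D\Id=1$, $D^m\Id=0$, and $|\tilde\image-\image|^2\in L^1$ because $\image,\tilde\image\in\imagespace$), so the infimum is finite. From the uniform energy bound and (W2) the quantity $\int_D(\det D\phi_j)^{-s}\d x$ is bounded, while the regularizing term bounds $\|D^m\phi_j\|_{L^2}$; since every $\phi_j$ equals $\Id$ on $\partial D$, a Poincar\'e--type (iterated) estimate then bounds $\phi_j$ in $W^{m,2}(D,\R^d)$. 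By reflexivity a subsequence converges weakly in $W^{m,2}$ to some $\phi$, and because $m>1+\tfrac d2$ the compact embedding $W^{m,2}(D)\hookrightarrow C^{1,\alpha}(\bar D)$ upgrades this, along a further subsequence, to $\phi_j\to\phi$ in $C^1(\bar D)$. In particular $\phi=\Id$ on $\partial D$, $\phi(\bar D)\subseteq\bar D$, and $D\phi_j$ together with $\cof D\phi_j$ and $\det D\phi_j$ converge uniformly.

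Next I would check lower semicontinuity of the three contributions. The term $\int_D\gamma|D^m\phi|^2\d x$ is convex, hence weakly lower semicontinuous on $W^{m,2}$. For $\int_D W(D\phi)\d x$ I would combine polyconvexity (W1) with the uniform convergence of $D\phi_j$ and its minors; Fatou's lemma applied to the bound in (W2) gives $\int_D(\det D\phi)^{-s}\d x\le\liminf_j\int_D(\det D\phi_j)^{-s}\d x<\infty$, which forces $\det D\phi>0$ almost everywhere, so that $\phi\in\admset$ and $W(D\phi)$ is the almost-everywhere limit of $W(D\phi_j)$. The fidelity term is the delicate one: since the energies are bounded, $\tilde\image\circ\phi_j$ is bounded in $\imagespace$, so a subsequence converges weakly in $\imagespace$ to some $\xi$. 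I would identify $\xi=\tilde\image\circ\phi$ by passing to the limit in $\int_D(\tilde\image\circ\phi_j)\eta\d x$ for $\eta\in C^\infty_c(D)$ through the change-of-variables (area) formula --- legitimate because, by \cite{Ba81}, each $\phi_j$ is almost-everywhere injective and orientation preserving, the inverse Jacobian factors being controlled via the uniform $L^1$-bound on $(\det D\phi_j)^{-s}$. Weak lower semicontinuity of the $\imagespace$-norm then yields $\int_D|\tilde\image\circ\phi-\image|^2\d x\le\liminf_j\int_D|\tilde\image\circ\phi_j-\image|^2\d x$. Adding the three estimates shows $\W[\image,\tilde\image,\phi]\le\WEner[\image,\tilde\image]$, so $\phi$ is a minimizer.

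It remains to establish the diffeomorphism property. At this stage $\phi\in C^{1,\alpha}(\bar D)$ with $\phi|_{\partial D}=\Id$ and $\det D\phi>0$ a.e.; Ball's theorem \cite{Ba81}, applicable since $\admset\subset W^{1,p}(D)$ for some $p>d$, shows that $\phi$ is globally invertible, and together with the boundary condition and a degree argument that it is a homeomorphism of $\bar D$ onto itself. To conclude $\phi^{-1}\in C^{1,\alpha}(\bar D)$ one needs the \emph{pointwise} bound $\det D\phi\ge\varepsilon>0$ on all of $\bar D$: then the inverse function theorem gives $\phi^{-1}\in C^1$ with $D\phi^{-1}=(D\phi)^{-1}\circ\phi^{-1}$, and the asserted H\"older regularity of $D\phi^{-1}$ follows from that of $D\phi$ and $\phi^{-1}$.

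I expect this pointwise positivity to be the main obstacle. Continuity of $\det D\phi$ and positivity almost everywhere do \emph{not} suffice: the one-dimensional prototype $x\mapsto x^3$ has positive derivative away from a single point and finite $(\det)^{-s}$ energy for small $s$, yet a non-$C^1$ inverse. The argument must therefore exploit minimality rather than the a priori bound alone. I would rule out degeneracy of the Jacobian through the Euler--Lagrange system, where the higher-order regularization $\gamma|D^m\phi|^2$ plays the decisive role in controlling $\det D\phi$ from below, in the spirit of Healey--Kr\"omer-type regularity for second-gradient elasticity.
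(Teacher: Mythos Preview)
Your direct-method outline matches the paper's proof in broad strokes, but two places diverge.

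\textbf{Fidelity term.} You argue via weak $L^2$-convergence of $\tilde\image\circ\phi_j$, identification of the weak limit through the area formula, and then weak lower semicontinuity of the norm. The paper takes a shorter route: once the $\phi_j$ and $\phi$ are known to be $C^{1,\alpha}$-diffeomorphisms (see below), one forms $\psi^j:=\phi\circ(\phi_j)^{-1}$ and observes $\psi^j\to\Id$ in $C^{1,\alpha}(\bar D)$. Approximating $\tilde\image\in L^2(D)$ by $C^1$ functions and using the uniform Jacobian control then gives \emph{strong} convergence $\tilde\image\circ\phi_j\to\tilde\image\circ\phi$ in $L^2(D)$, hence actual convergence of the matching term rather than only lower semicontinuity. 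This is both simpler and stronger than your weak-limit identification, and it avoids the delicate passage to the limit under the area formula that you sketch.

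\textbf{Inverse regularity.} You flag the pointwise lower bound $\det D\phi\ge\varepsilon>0$ as the main obstacle and propose to extract it from minimality via the Euler--Lagrange system, in the spirit of Healey--Kr\"omer. The paper does \emph{not} use minimality here at all: after showing $\det D\phi>0$ a.e.\ and invoking Ball's global invertibility theorem to obtain a homeomorphism, it simply cites ``Sard's theorem for H\"older spaces'' (reference \cite{BoHaSt05}) to conclude that $(\phi_j)^{-1}$ and $\phi^{-1}$ are uniformly bounded in $C^{1,\alpha}(\bar D)$. This is applied to every element of the minimizing sequence, not just the minimizer, and is treated as a black-box consequence of that reference rather than something requiring the variational structure. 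Your $x\mapsto x^3$ concern is reasonable in the abstract, but in the paper's framework the issue is delegated entirely to \cite{BoHaSt05}.

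Note also that the ordering matters: the paper establishes the $C^{1,\alpha}$ control on the inverses \emph{before} treating the fidelity term, since the argument via $\psi^j=\phi\circ(\phi_j)^{-1}$ relies on it. Your ordering (fidelity first, diffeomorphism property last) would not permit the paper's shortcut for the matching functional.
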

\begin{proof}
The proof proceeds in four steps.

{\em Step 1}. Due to (W1), we know that 
$0 \leq \underline{\mathbf{W}} :=\inf_{\phi \in\admset}\W[\image,\tilde\image,\phi]$ and since
$\Id\in\admset$ we have that $\W[\image,\tilde\image,\Id]<\infty$. 
Consider a minimizing sequence $(\phi^j)_{j\in\N} \subset\admset$ with monotonously decreasing energy 
$\W[\image,\tilde\image,\phi^j]<\infty$ that converges to $\underline{\mathbf{W}}$. 
In particular, $\overline{\mathbf{W}} = \W[\image,\tilde\image,\phi^1] < \infty$ is an upper bound.
As a consequence of Korn's inequality and the Gagliardo-Nirenberg inequality for bounded domains (see \cite[Theorem 1]{Ni66}),
we can deduce that the minimizing sequence is bounded in $W^{m,2}(D)$. Hence, due to the reflexivity of this space, there is a weakly convergent subsequence in $W^{m,2}(D)$, again denoted by 
$\phi^j$, such that $\phi^j\rightharpoonup\phi$ and by the Sobolev embedding theorem we can assume uniform convergence of 
$\phi^j\rightarrow\phi$ in $C^{1,\alpha}(\bar D)$ for $\alpha\in(0,m-1-\frac{d}{2})$. 

{\em Step 2}. 
We show that the deformation $\phi$ belongs to $\admset$. To this end, we will control the measure of the set
$S_\epsilon =\setof{x\in D}{\det  D \phi \leq \epsilon}$ for sufficiently small $\epsilon>0$.
Indeed, by using (W1), (W2) and Fatou's lemma, we 
obtain 
\begin{align*}
&\beta_0 \epsilon^{-s}|S_\epsilon|
\leq \beta_0 \int_{S_\epsilon} (\det D\phi)^{-s} \d x
\leq \int_{S_\epsilon} W(D\phi) \d x+\beta_1|D|\\
&\leq  \liminf_{j\to\infty}\int_{S_\epsilon} W(D\phi^j) \d x+\beta_1|D|
\leq \overline{\mathbf{W}}+\beta_1|D|
\end{align*}
and thus  $|S_\epsilon|\leq \frac{(\overline{\mathbf{W}}+\beta_1|D|)\epsilon^s}{\beta_0}$, which shows $|S_0|=0$ and
$\det  D\phi>0$ a.~e. on $D$. This implies $\phi\in\admset$ (note $\phi\in W^{1,p}$ for a $p>d$) and 
due to \cite[Theorem 1]{Ba81} and $\phi\in W^{m,2}(D)$ the deformation $\phi$ is injective and a homeomorphism.
By Sard's theorem for H\"older spaces (cf. \cite{BoHaSt05}) we additionally know that $(\phi^j)^{-1}$,  $\phi^{-1}$ are uniformly bounded in  $C^{1,\alpha}(\bar D)$.

{\em Step 3}. Next, we consider the convergence of the matching functional. To this end, using the above diffeomorphism property, we estimate  
\beqa
&& \int_D |\tilde u \circ \phi^j- u |^2-|\tilde u \circ \phi-u|^2 \d x \leq \int_D (|\tilde u \circ \phi^j- u |+|\tilde u \circ \phi-u|) |\tilde u \circ \phi^j- \tilde u \circ \phi | \d x \\
&& \leq C \left(\|\tilde u \circ \phi^j\|_{L^2(D)} + \|\tilde u \circ \phi\|_{L^2(D)} + \|u\|_{L^2(D)}\right) \|\tilde u \circ \phi^j- \tilde u\circ \phi\|_{L^2(D)}\\
&& \leq C \left(\|\tilde u\|_{L^2(D)} +\|u\|_{L^2(D)}\right) \|\tilde u - \tilde u\circ \psi^j\|_{L^2(D)}
\eeqa
with $\psi^j = \phi\circ (\phi^{j})^{-1}$. Due to the convergence of $\psi^j$ to the identity in $C^{1,\alpha}(\bar D)$, we observe that the right hand side of the above estimate 
convergences to $0$. To see this, we can approximate $\tilde u$ in $L^2(D)$ by a sequence of $C^1$ functions $(\tilde u_i)_{i\in \N}$ and obtain 
$$ \|\tilde u - \tilde u\circ \psi^j\|_{L^2(D)} \leq \|\tilde u - \tilde u_i\|_{L^2(D)} + \|\tilde u_i - \tilde u_i\circ \psi^j\|_{L^2(D)} + \|\tilde u_i \circ \psi^j\ - \tilde u \circ \psi^j\|_{L^2(D)}\,.$$
The first and the third term on the right hand side converge to $0$ for $i\to \infty$ and fixed $j$, whereas the second term converges to zero for $j\to \infty$ and fixed $i$.
This establishes the convergence of the matching functional.

{\em Step 4}.  Finally, we show the lower semicontinuity for the whole functional. Let $j(\epsilon)\in\N$ be such that
$
\W[\image,\tilde\image,\phi^j] \leq \W[\image,\tilde\image,\phi^{j(\epsilon)}] \leq \underline{\mathbf{W}} + \epsilon
$
for all  $j \geq j(\epsilon)\,$. Furthermore, we can enlarge $j(\epsilon)$ if necessary such that for all $j\geq j(\epsilon)$
\beq
\left|\int_{D} |\tilde\image \circ \phi^j{(x)} - \image (x)|^2 -
|\tilde\image \circ \phi (x) - \image (x)|^2 \d x \right|\le \epsilon\,.
\eeq
Again using (W1), (W2) and Fatou's lemma, we infer
{\allowdisplaybreaks
\begin{align*}
&\W[\image,\tilde\image,\phi] = \int_D W(D\phi) + \gamma |D^m \phi|^2 + \frac{1}{\delta}|\tilde\image \circ \phi - \image|^2 \d x\\ 
&\leq  \liminf_{j\to\infty} \int_{D}  W (D\phi^j) +\gamma |D^m \phi^j|^2 +
\frac{1}{\delta}|\tilde\image \circ \phi^j - \image|^2 \d x+\frac{\epsilon}{\delta}
\leq  \underline{\mathbf{W}} + \epsilon + \frac{\epsilon}{\delta} \,,
\end{align*}
} 
which proves the claim.
\end{proof}

Next, for a given discrete path $\imagevec=(\image_0,\ldots, \image_K)\in(\imagespace)^{K+1}$,
we define a \textit{discrete path energy} explicitly depending on a $K$-tuple of 
deformations $\defvec=(\phi_1,\ldots, \phi_K)\in\admset^K$ as follows:
\begin{equation*}
\ESPhi{K}[\imagevec,\defvec]:= K \sum_{k=1}^{K}\W[\image_{k-1},\image_k,\phi_k]\,.
\end{equation*}
As an immediate consequence of Proposition \ref{wellPosednessWEner}, there exists a vector of deformations $\defvec \in\admset^K$
such that $\ESPhi{K}[\imagevec,\defvec]=\pathenergy_{K}[\imagevec]$.
If the images $\image_0,\ldots, \image_K$ are sufficiently smooth, the corresponding system of 
Euler-Lagrange equations for $\phi_k$ is given by
\begin{equation*} 
\int_D W_{,A}(D \phi_k) : D \theta + 2 \gamma D^m\phi_k : D^m \theta + \frac{2}{\delta} (\image_k \circ \phi_k-\image_{k-1}) (\nabla \image_k \circ \phi_k) \cdot \theta \, \d x =0
\end{equation*}
for all $1\leq k\leq K$ and all test deformations $\theta\in W^{m,2}(D,\R^d) \cap W^{1,2}_0(D,\R^d)$, which is a system of nonlinear PDEs of order $2m$. Here ``:'' denotes the sum over all pairwise products of two tensors.

Before we discuss the existence of discrete geodesics, we first present the following partial result, 
which can be regarded as a counterpart of
Proposition \ref{wellPosednessWEner} because it establishes the existence of an energy minimizing vector of images $\imagevec$ for a given vector of deformations $\defvec$.

\begin{proposition}\label{existenceImageInnerVec}
Let $\image_A,\image_B\in\imagespace$ and $K\geq 2$. Assume a vector $\defvec\in\admset^K$ is given.
Then, there exists a unique $\imagevec = (\image_0,\ldots, \image_K) \in(\imagespace)^{K+1}$ with $\image_0= \image_A$, $\image_K= \image_B$ 
such that
\begin{equation*}
\ESPhi{K}[\imagevec,\defvec]=\inf_{\tilde\imagevec\in(\imagespace)^{K+1},\, \tilde \image_0= \image_A,\, \tilde \image_K= \image_B}\ESPhi{K}[\tilde\imagevec,\defvec]\,.
\end{equation*}
\end{proposition}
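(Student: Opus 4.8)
The plan is to exploit that, once $\defvec=(\phi_1,\dots,\phi_K)$ is fixed, the deformation terms $\int_D W(D\phi_k)+\gamma|D^m\phi_k|^2\d x$ in $\ESPhi{K}[\imagevec,\defvec]$ are constants independent of $\imagevec$. Minimizing $\ESPhi{K}[\,\cdot\,,\defvec]$ over the closed affine subspace $S:=\{\imagevec\in(\imagespace)^{K+1}:\image_0=\image_A,\ \image_K=\image_B\}$ is therefore equivalent to minimizing the quadratic matching functional
\[
F[\imagevec]:=\sum_{k=1}^K\int_D|\image_k\circ\phi_k-\image_{k-1}|^2\d x
\]
over the free variables $(\image_1,\dots,\image_{K-1})\in(\imagespace)^{K-1}$. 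Since $S$ is a translate of a Hilbert space and $F$ is a nonnegative convex quadratic functional, existence and uniqueness would follow from the standard fact that a coercive, strictly convex, weakly lower semicontinuous functional on a reflexive space attains a unique minimizer. The whole proof thus reduces to these three properties, each of which rests on the mapping behaviour of the composition operators $T_k\colon w\mapsto w\circ\phi_k$.

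The key structural input I would establish first is that each $\phi_k\in\admset$ is a diffeomorphism with $\det D\phi_k$ bounded above and below by positive constants on $\bar D$; this follows along the lines of Proposition \ref{wellPosednessWEner} from \cite[Theorem 1]{Ba81} together with the Hölder regularity of $\phi_k$ and $\phi_k^{-1}$. Using the area formula and $\phi_k(D)=D$ one has $\|w\|_{\imagespace}^2=\int_D|w\circ\phi_k|^2\det D\phi_k\d x$, so that $T_k$ is a bounded linear isomorphism of $\imagespace$ satisfying $\tfrac1{\sqrt C}\|w\|_{\imagespace}\le\|T_kw\|_{\imagespace}\le\tfrac1{\sqrt c}\|w\|_{\imagespace}$ for suitable $0<c\le C$; in particular $T_k$ is injective and weak-to-weak continuous. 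I regard securing these two-sided bounds as the genuine crux of the argument: one must rule out degeneration of $\det D\phi_k$ (which would even force $\image_B\circ\phi_K\notin\imagespace$ and an infinite infimum). Granting them, everything else is routine Hilbert-space convexity.

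For coercivity I would chain forward along the discrete path: if $F[\imagevec]\le M$ then $\|\image_k\circ\phi_k-\image_{k-1}\|_{\imagespace}\le\sqrt M$ for each $k$, and starting from the fixed value $\image_0=\image_A$ the lower bound $\|T_k\image_k\|_{\imagespace}\ge\tfrac1{\sqrt C}\|\image_k\|_{\imagespace}$ together with the triangle inequality bounds $\|\image_1\|_{\imagespace}$, then $\|\image_2\|_{\imagespace}$, and so on up to $\|\image_{K-1}\|_{\imagespace}$ in terms of $\|\image_A\|_{\imagespace}$ and $\sqrt M$. Since the infimum is finite (the forward transport $\image_k=\image_A\circ\phi_1^{-1}\circ\dots\circ\phi_k^{-1}$ lies in $(\imagespace)^{K-1}$ and makes all but the last summand vanish, the last being finite by boundedness of $T_K$), any minimizing sequence is bounded in $(\imagespace)^{K-1}$ and, by reflexivity, admits a weakly convergent subsequence $\image_k^{j}\rightharpoonup\image_k$. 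Weak lower semicontinuity of $F$ is then immediate: each summand is the squared norm of $T_k\image_k^{j}-\image_{k-1}^{j}\rightharpoonup T_k\image_k-\image_{k-1}$, and the norm is weakly lower semicontinuous, so the weak limit minimizes $F$.

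Uniqueness follows from strict convexity. The homogeneous quadratic part of $F$, evaluated on a difference $\mathbf w=\imagevec-\imagevec'$ of two admissible tuples (so that $w_0=w_K=0$), is $Q[\mathbf w]=\sum_{k=1}^K\|T_kw_k-w_{k-1}\|_{\imagespace}^2$. If $Q[\mathbf w]=0$ then $w_{k-1}=T_kw_k$ for every $k$; starting from $w_0=0$ and using injectivity of $T_k$ one gets successively $w_1=0,\dots,w_{K-1}=0$. Thus $Q$ is positive definite on the tangent space $\{w_0=w_K=0\}$ of $S$, $F$ is strictly convex there, and the minimizer is unique. As indicated above, the only genuinely delicate step is the second one: once the composition operators $T_k$ are shown to be isomorphisms of $\imagespace$, coercivity, lower semicontinuity, and injectivity all come for free.
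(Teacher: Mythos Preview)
Your argument is correct and, for existence, essentially mirrors the paper's: both use the direct method, obtain a uniform $L^2$-bound on a minimizing sequence by chaining the triangle inequality along the discrete path (you start from $\image_0=\image_A$ and go forward, the paper starts from $\image_K=\image_B$ and goes backward), and invoke weak lower semicontinuity of the quadratic matching term. Your explicit remark that the boundedness and injectivity of the composition operators $T_k$ hinge on two-sided bounds for $\det D\phi_k$ is well placed; the paper uses the same fact tacitly.

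The uniqueness arguments, however, diverge. You argue directly from strict convexity: the homogeneous quadratic form $Q[\mathbf w]=\sum_k\|T_kw_k-w_{k-1}\|_2^2$ vanishes only if all $w_k$ vanish, by injectivity of the $T_k$ and the boundary condition $w_0=0$. The paper instead writes out the Euler--Lagrange equations, transforms them via the change of variables $y=\phi_k(x)$ into the pointwise relation
\[
\image_k(x)=\frac{\image_{k+1}\!\circ\!\phi_{k+1}(x)+\image_{k-1}\!\circ\!\phi_k^{-1}(x)\,(\det D\phi_k)^{-1}\!\circ\!\phi_k^{-1}(x)}{1+(\det D\phi_k)^{-1}\!\circ\!\phi_k^{-1}(x)},
\]
and then, evaluating along the discrete transport paths $X_k(x)=\phi_k\!\circ\cdots\circ\!\phi_1(x)$, obtains for each $x$ a $(K{-}1)\times(K{-}1)$ tridiagonal linear system whose matrix is irreducibly diagonally dominant and hence invertible. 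Your route is shorter and conceptually cleaner for the bare existence/uniqueness statement. The paper's route costs more work but yields dividends: the explicit convex-combination formula above is precisely what drives Remark~\ref{remarklinear} on inherited $L^\infty$ and H\"older regularity of the intermediate images, and the transport-path viewpoint resurfaces in the numerics (Section~5) and in the visualization of the accumulated material derivative. If you only need Proposition~\ref{existenceImageInnerVec} as stated, your proof suffices; if you later want the pointwise structure of the minimizer, you would have to redo the paper's computation anyway.
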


\begin{proof}
Let $\imageinnervec^j=(\image_1^j,\ldots,\image_{K-1}^j)\subset(\imagespace)^{K-1}$ be a minimizing sequence for the energy
$\ESPhi{K}[(\image_A,\cdot,\image_B),\defvec]$. With $\overline{\ESPhi{K}}$ as a finite upper bound for this energy along this sequence.
This upper bound is obtained setting $u_k = \tfrac{k}{K} u_B+  (1-\tfrac{k}{K}) u_A$.
Thanks to the estimate 
\beqn
\|\image_k^j\|_{2}\leq \|\image_{k+1}^j\circ\phi_{k+1}-\image_k^j\|_{2}+\|\image_{k+1}^j\circ\phi_{k+1}\|_{2}
\leq \left(\delta\overline{\ESPhi{K}}\right)^{\frac12}K^{-\frac12}+\|\image_{k+1}^j\circ\phi_{k+1}\|_{2}
\label{eq:UniformBoundImages}
\eeqn
we can deduce via induction (starting from $k=K-1$)
that $\imageinnervec^j$ is uniformly bounded in $(L^2(D))^{K-1}$ independent of $j$.
Thus, there exists a weakly convergent subsequence in $(L^2(D))^{K-1}$ with weak limit $\imageinnervec=(\image_1,\ldots, \image_{K-1})$.

We still have to show the uniqueness of this minimizer.
To this end we take into account the transformation rule 
\beqa
&& \int_D (\image_{k} \circ \phi_{k} - \image_{k-1})^2 + (\image_{k+1} \circ \phi_{k+1} - \image_k)^2\d x \\
 && = \int_D (\image_{k}  - \image_{k-1} \circ \phi_{k}^{-1})^2  (\det D \phi_{k})^{-1} \circ \phi_k^{-1} + (\image_{k+1} \circ \phi_{k+1} - \image_k)^2\d x
\eeqa
and derive from the Euler-Lagrange equation $\partial_{\image_k} \ESPhi{K}[\imagevec,\defvec] =0$ the pointwise 
condition 
\begin{equation*}
\left((\image_k -\image_{k-1}\circ \phi_{k}^{-1}) \left((\det D \phi_{k})^{-1} \circ \phi_k^{-1}\right) +
(\image_{k}-\image_{k+1} \circ \phi_{k+1})\right)(x) = 0\text{ for a.e. }x \in D\,,
\end{equation*}
which can also be written as 
\beqn\label{eq:Linfty}
\image_{k}(x) = \frac{\image_{k+1}\circ \phi_{k+1}(x) +
(\image_{k-1}\circ \phi_{k}^{-1}(x)) ((\det D \phi_{k})^{-1} \circ \phi_{k}^{-1}(x)) }{1+(\det D \phi_{k})^{-1} \circ \phi_{k}^{-1}(x)}
\eeqn
for a.e. $x \in D$.
This leads to a linear system of equations for
$(\image_1,\ldots, \image_{K-1})$, where evaluations at deformed positions are combined with evaluations
at non-deformed positions, which we can consider as a block tridiagonal operator equation.
In fact, defining for each $x\in D$ the discrete transport path 
\[X(x) = ( X_0(x), X_1(x),X_2(x),\ldots, X_{K}(x))^T \in\R^{K+1}\]
with $X_0(x) = x$ and 
$X_k(x) = \phi_k(X_{k-1}(x))$ for $k\in\{1,\ldots, K\}$ and the vector of associated intensity values
\begin{equation}\label{eq:Xk}
U(\imageinnervec,\defvec)(x) := (\image_1(X_1(x)),\image_2(X_2(x)),\ldots, \image_{K-1}(X_{K-1}(x)))^T\in\R^{K-1}
\end{equation}
we obtain for $K\geq 3$ and a.e. $x\in D$ 
a linear system of equations 
\beqn \label{eq:linearSystem}
\Matrix[\defvec](x) U(\imageinnervec,\defvec)(x) = R[\defvec](x) 
\eeqn
on $\R^{K-1}$.
In this case, $\Matrix[\defvec](x) \in\R^{K-1,K-1}$ is a tridiagonal matrix with 
\begin{align*}
(\Matrix[\defvec](x))_{k,k+1} =& -\frac1{1+(\det D \phi_{k})^{-1}\circ \phi_{k}^{-1}(X_k(x))}=-\frac1{1+(\det D \phi_{k})^{-1}(X_{k-1}(x))}\,, \\
(\Matrix[\defvec](x))_{k,k} =& +1 \,,\\
(\Matrix[\defvec](x))_{k,k-1} =& -\frac{(\det D \phi_{k})^{-1} \circ \phi_{k}^{-1}(X_k(x))}
{1+(\det D \phi_{k})^{-1} \circ \phi_{k}^{-1}(X_k(x))}=-\frac{(\det D \phi_{k})^{-1}(X_{k-1}(x))}
{1+(\det D \phi_{k})^{-1}(X_{k-1}(x))}\,,
\end{align*}
and $R[\defvec](x)\in\R^{K-1}$ is given by
\begin{equation*}
R[\defvec](x) =
\left(\frac{\image_{A}(x)(\det D \phi_{1})^{-1}(x)}{1+(\det D \phi_{1})^{-1}(x)}\ ,\ 0\ ,\ \ldots\ ,\ 0\ ,\ \frac{\image_{B}(X_K(x))}
{1+(\det D \phi_{K-1})^{-1}(X_{K-2}(x))}\right)^T.
\end{equation*}
For any vector of regular deformations $\defvec\in\admset^K$, we recall that  $\det D\phi_k >0$ for $k=1,\ldots, K$ and 
$\defvec\in (C^1(D))^K$. From this we deduce that for a.e. $x\in D$ 
the matrix $\Matrix[\defvec](x)$ is irreducibly diagonally dominant, which implies invertibility. Thus, for all $x\in D$  there exists a unique solution $U(\imageinnervec,\defvec)(x)$ solving \eqref{eq:linearSystem}.
\end{proof}
\begin{remark}[Inherited regularity]\label{remarklinear}
(i) If the input images $u_A$ and $u_B$ are in $L^\infty(D)$, then the images $u_1,\ldots, u_{K-1} \in L^\infty(D)$ and they share the same upper and lower bound 
as the input images. This follows immediately from the fact that $u_k(X_k(x))$ can be written as a convex combination of $u_{k-1}(X_{k-1}(x))$ and $u_{k+1}(X_{k+1}(x))$ for $k=1,\ldots, K-1$ due to \eqref{eq:Linfty}.\\
(ii) If the input images $\image_A$ and $\image_B$ are in $C^{0,\alpha}(\bar D)$ for $\alpha \leq m-1-\frac{d}2$, 
then the proof of Theorem \ref{existenceImageInnerVec} also shows that $\image_k \in C^{0,\alpha}(\bar D)$ for all $k=1,\ldots, K-1$.\\
(iii) The intensity values along the discrete transport path $X(x)$ depend in a unique way on
the values at the two end points $x$ and $X_{K}(x)$ and each $\image_k(X_k(x))$ is a weighted average of the
intensities $\image_A(x)$ and $\image_B(X_K(x))$, where the weights reflect the compression and expansion associated with 
the deformations along the discrete transport paths.
\end{remark}

Now, we are in the position to prove the existence of discrete geodesics making use of the existence of a minimizing family of deformations  
for the energy $\ESPhi{K}$ and a given discrete image path as a consequence of Proposition \ref{wellPosednessWEner} and 
the existence of an optimal discrete image path for a given family of deformations as stated in Proposition \ref{existenceImageInnerVec}.
\begin{theorem}[Existence of discrete geodesics]
Let $\image_A,\image_B\in\imagespace$ and $K\geq 2$. Then there exists $\imageinnervec\in(\imagespace)^{K-1}$ such that
$$\pathenergy_{K}[(\image_A,\imageinnervec,\image_B)]=\inf_{\imageinnervectest\in(\imagespace)^{K-1}}\pathenergy_{K}[(\image_A,\imageinnervectest,\image_B)]\,.$$
\end{theorem}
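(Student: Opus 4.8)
The plan is to run the direct method in the calculus of variations, reusing the deformation compactness from the proof of Proposition~\ref{wellPosednessWEner} and the image bound \eqref{eq:UniformBoundImages} from Proposition~\ref{existenceImageInnerVec}, the crucial difference being that here the matching deformations are \emph{not} fixed but vary along the minimizing sequence. First I would choose a minimizing sequence $\imageinnervec^j=(\image_1^j,\ldots,\image_{K-1}^j)$ for $\imageinnervectest\mapsto\pathenergy_{K}[(\image_A,\imageinnervectest,\image_B)]$ and, by Proposition~\ref{wellPosednessWEner}, an accompanying vector of optimal deformations $\defvec^j=(\phi_1^j,\ldots,\phi_K^j)\in\admset^K$ with $\pathenergy_{K}[(\image_A,\imageinnervec^j,\image_B)]=\ESPhi{K}[(\image_A,\imageinnervec^j,\image_B),\defvec^j]$. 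As in Proposition~\ref{existenceImageInnerVec}, the linear interpolant $\image_k=\tfrac{k}{K}\image_B+(1-\tfrac{k}{K})\image_A$ yields a finite upper bound $\overline{\ESPhi{K}}$ for the energy along the sequence; since every summand is nonnegative this bounds $\int_D W(D\phi_k^j)+\gamma|D^m\phi_k^j|^2\d x$ uniformly in $j$ for each $k$, and the Korn and Gagliardo--Nirenberg argument from the proof of Proposition~\ref{wellPosednessWEner} gives a uniform $W^{m,2}(D)$-bound on each $\phi_k^j$. Passing to a subsequence I obtain $\phi_k^j\rightharpoonup\phi_k$ in $W^{m,2}(D)$ and $\phi_k^j\to\phi_k$ in $C^{1,\alpha}(\bar D)$ for all $k$, and the argument of that proof shows each limit $\phi_k\in\admset$ is a diffeomorphism with $\phi_k^{-1}\in C^{1,\alpha}(\bar D)$.

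Next I would secure a uniform $L^2$-bound for the images. Since each $\phi_k$ is a $C^{1,\alpha}$-diffeomorphism up to the boundary, $\det D\phi_k$ is continuous and strictly positive on $\bar D$, hence $\det D\phi_k\geq c_k>0$; the strong convergence $D\phi_k^j\to D\phi_k$ then forces $\det D\phi_k^j\geq\tfrac{c_k}{2}$ on $\bar D$ for all large $j$. Changing variables and using $\phi_k^j(D)=D$ gives
\[
\|\image_k^j\circ\phi_k^j\|_{L^2(D)}^2=\int_D|\image_k^j|^2\,\big((\det D\phi_k^j)^{-1}\circ(\phi_k^j)^{-1}\big)\d x\leq\tfrac{2}{c_k}\|\image_k^j\|_{L^2(D)}^2,
\]
while the energy bound controls $\|\image_k^j\circ\phi_k^j-\image_{k-1}^j\|_{L^2(D)}\leq(\delta\overline{\ESPhi{K}})^{1/2}K^{-1/2}$. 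Inserting both facts into the triangle inequality \eqref{eq:UniformBoundImages} and inducting downward from $\image_K^j=\image_B$ bounds $\imageinnervec^j$ uniformly in $(\imagespace)^{K-1}$. Reflexivity then furnishes a further subsequence with $\image_k^j\rightharpoonup\image_k$ weakly in $\imagespace$ for every $k$, and I set $\imageinnervec=(\image_1,\ldots,\image_{K-1})$.

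It remains to prove lower semicontinuity, i.e.\ $\ESPhi{K}[(\image_A,\imageinnervec,\image_B),\defvec]\leq\liminf_j\pathenergy_{K}[(\image_A,\imageinnervec^j,\image_B)]$ with $\defvec=(\phi_1,\ldots,\phi_K)$. For the stored-energy contributions the uniform determinant bound confines the matrices $D\phi_k^j$ to a compact set on which $W$ is continuous by (W3), so $\int_D W(D\phi_k^j)\to\int_D W(D\phi_k)$, while $\gamma\int_D|D^m\phi_k^j|^2\d x$ is weakly lower semicontinuous. \emph{The delicate step, and the main obstacle, is the nonlinear matching term}, since it couples the weakly convergent images to the deformations. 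Here I would first show $\image_k^j\circ\phi_k^j\rightharpoonup\image_k\circ\phi_k$ weakly in $\imagespace$: testing against $\psi\in\imagespace$ and changing variables rewrites the pairing as $\int_D\image_k^j\,\big(\psi\circ(\phi_k^j)^{-1}\big)\big((\det D\phi_k^j)^{-1}\circ(\phi_k^j)^{-1}\big)\d x$, where the second factor converges \emph{strongly} in $\imagespace$ by the $C^1$-approximation argument of Step~3 of the proof of Proposition~\ref{wellPosednessWEner}, so the product of a weakly and a strongly convergent sequence passes to the limit. Consequently $\image_k^j\circ\phi_k^j-\image_{k-1}^j\rightharpoonup\image_k\circ\phi_k-\image_{k-1}$ weakly, and weak lower semicontinuity of the squared $L^2$-norm handles the final summand.

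Summing the three lower-semicontinuity estimates over $k$ yields $\ESPhi{K}[(\image_A,\imageinnervec,\image_B),\defvec]\leq\liminf_j\pathenergy_{K}[(\image_A,\imageinnervec^j,\image_B)]$. Since $\WEner[\image_{k-1},\image_k]\leq\W[\image_{k-1},\image_k,\phi_k]$ by definition of $\WEner$, we have $\pathenergy_{K}[(\image_A,\imageinnervec,\image_B)]\leq\ESPhi{K}[(\image_A,\imageinnervec,\image_B),\defvec]$, so $\pathenergy_{K}[(\image_A,\imageinnervec,\image_B)]$ does not exceed the infimum along the minimizing sequence and $\imageinnervec$ is a minimizer. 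I expect the two places needing the most care to be securing the uniform lower bound on $\det D\phi_k^j$ (without which neither the image bound nor the convergence of the stored energies is available) and the weak--strong composition argument making the matching term lower semicontinuous.
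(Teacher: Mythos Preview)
Your proposal is correct and follows essentially the same direct-method strategy as the paper: extract optimal deformations for a minimizing image sequence, use the $W^{m,2}$ bound to get $C^{1,\alpha}$-compactness of the deformations, invoke \eqref{eq:UniformBoundImages} for weak compactness of the images, and pass to the limit via lower semicontinuity. The only structural difference is that the paper inserts an extra step---after choosing $\defvec^j$, it \emph{replaces} $\imageinnervec^j$ by the unique minimizer of $\ESPhi{K}[(\image_A,\cdot,\image_B),\defvec^j]$ from Proposition~\ref{existenceImageInnerVec}, which further lowers the energy---whereas you work directly with the original sequence; your route is slightly more economical, and in return you are (rightly) more explicit than the paper about securing the uniform lower bound on $\det D\phi_k^j$ needed to run the induction in \eqref{eq:UniformBoundImages} and to justify the change of variables in the weak--strong argument for the matching term.
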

\begin{proof} Let us assume that $(\imageinnervec^j)_{j\in\N} \in(\imagespace)^{K-1}$ with $\imageinnervec^j = (\image_1^j, \ldots, u^j_{K-1})$ 
is a minimizing sequence of the discrete 
path energy $\pathenergy_{K}[(\image_A,\cdot,\image_B)]$, where $\overline{\pathenergy}_{K}$ is an upper bound of the discrete path energy.
Due to Proposition \ref{wellPosednessWEner}, for every $\imageinnervec^j$ there exists 
a family of optimal deformations 
$\defvec^j = (\phi^j_1,\ldots, \phi^j_K) \in \admset^K$ with 
$\ESPhi{K}[(\image_A,\imageinnervec^j,\image_B),\defvec^j]\leq \ESPhi{K}[(\image_A,\imageinnervec^j,\image_B), \mathbf{\Phi}']$ 
for all $\mathbf{\Phi}' \in \admset^K$.
Furthermore,  we can assume (by possibly replacing $\imageinnervec^j$ and thereby further reducing the energy) that $\imageinnervec^j$ already minimizes the 
discrete path energy $\ESPhi{K}[(\image_A,\imageinnervectest,\image_B),\defvec^j]$ over all $\imageinnervectest \in (\imagespace)^{K-1}$.
We note that due to the coercivity estimate  
$\|D^m \phi_k^j\|_2^2\leq\frac{\overline{\pathenergy}_{K}}{\gamma}$ and the Gagliardo-Nirenberg inequality
the deformations $\phi^j_k$ are uniformly bounded in $W^{m,2}(D,\R^d)$ for $k=1,\ldots, K$.
Together with the compact embedding of $W^{m,2}(D,\R^d)$ into $C^{1,\alpha}(\bar D,\R^d)$ for $0<\alpha<m-1-\frac{d}{2}$,
this implies that (up to the selection of another subsequence) 
$\defvec^j$ converges to $\defvec=(\phi_1,\ldots,\phi_{K})$ 
weakly in $(W^{m,2}(D,\R^d))^K$ and uniformly in $(C^{1,\alpha}(\bar D,\R^d))^K$.
Following the same line of arguments as in Step 2 of the proof of Proposition \ref{wellPosednessWEner},
we in addition infer that $\det D\phi_k>0$ a.e. in $D$ for $k=1,\ldots, K$ and thus
$\defvec\in\admset^K$. 

Due to \eqref{eq:UniformBoundImages} we know that 
the resulting images $\image^j_k$, which are associated with the above subsequence of deformations, are uniformly bounded for $k=1,\ldots, {K-1}$
in $\imagespace)$. Hence, a subsequence of $(\image^j_k)_{j\in \N}$ converges weakly in $L^2(D)$ to some $\image_k$.
Finally, we deduce from the strong convergence of $\defvec^j$ in $(C^{1,\alpha}(\bar D,\R^d))^K$ 
that 
\beq
\sum_{k=1}^K\int_D (\image_{k} \circ \phi_{k} - \image_{k-1})^2 \d x \leq
\liminf_{j \to \infty} \sum_{k=1}^K \int_D (\image^j_{k} \circ \phi^j_{k} - \image^j_{k-1})^2 \d x\,.
\eeq
Together with the weak lower semi-continuity of $\phi\mapsto \int_D W(D \phi)+\gamma |D^m \phi|^2 \d x$
we obtain with $\imageinnervec=(\image_1,\ldots, \image_{K-1})$  
that
\begin{align*}
&\pathenergy_{K}[\image_A, \imageinnervec, \image_B] = \ESPhi{K}[(\image_A, \imageinnervec,\image_B),\defvec] \\
\leq&  \liminf_{j\to \infty} \ESPhi{K}[(\image_A,\imageinnervec^j,\image_B), \defvec^j]
= \liminf_{j\to \infty} \pathenergy_{K}[\image_A,  \imageinnervec^j, \image_B]\,.
\end{align*}
This proves the claim.
\end{proof}

\section{Convergence of discrete geodesic paths}\label{section:Gamma}
In what follows, we will study the convergence of minimizers of our discrete variational model 
\eqref{eq:pathenergy} for $K\to \infty$ to minimizers of the 
continuous model \eqref{eq:DefinitionPathenergy} and thus the convergence of discrete geodesic paths to continuous geodesic paths.
To this end, we prove $\Gamma$-convergence estimates for a natural extension of the discrete path energy. 
For an introduction to $\Gamma$-convergence, we refer to \cite{Da93}.

At first, let us discuss a suitable interpolation of continuous paths.
 For fixed $K\geq 2$ and time step size $\tau = \frac1K$, let $t_k = k\tau$
denote the time step corresponding to a vector of images 
$\imagevec=(\image_0,\ldots,\image_K) \in (\imagespace)^{K+1}$.  For a vector  
$\defvec=(\phi_1,\ldots,\phi_K)\in\admset^K$ of optimal deformations resulting from the 
minimization in \eqref{eq:WEnerDefinition}, we define for 
$k=1,\ldots, K$ the motion field $v_k = K (\phi_{k}-\Id)$ and the induced transport map
$y_k(t,x) = x + (t-t_{k-1}) v_k(x)$ with $t\in [t_{k-1},t_k]$. Note that
$y_k(t_{k-1},x)=x$ and $y_k(t_k,x) = \phi_k(x)$.
If one assumes that $\funcnorm{D\phi_k-\Id}{\infty} := \sup_{x\in D} \max_{|v|=1}|(D\phi(x)-\Id)v|<1$, then 
$y_k(t,\cdot) = \Id + K (t-t_{k-1}) (\phi_k-\Id)$ is invertible. Thus, denoting the inverse of $y_k(t,\cdot)$
by $x_k(t,\cdot)$ one obtains the image interpolation $\imagecurve = \interpolimage_{K}[\imagevec,\defvec]$ with 
\begin{equation}
\interpolimage_{K}[\imagevec,\defvec](t,x)=\image_{k-1}(x_k(t,x))+K(t-t_{k-1})(\image_{k}\circ\phi_k-\image_{k-1})(x_k(t,x))
\label{eq:ImagecurveRep}
\end{equation}
for $t\in [t_{k-1},t_k]$. This interpolation represents on each interval $[t_{k-1},t_k]$ the 
blending between the images $\image_{k-1}=
\interpolimage_{K}[\imagevec,\defvec](t_{k-1},\cdot)$ and 
$\image_k=\interpolimage_{K}[\imagevec,\defvec](t_{k},\cdot)$ along affine transport paths 
$$\setof{(t,y_k(t,x))}{t\in [t_{k-1},t_k]}$$ for $x\in D$. 
Based on this interpolation, a straightforward extension $\continuouspathenergy_{K}:L^2((0,1)\times D)\rightarrow[0,\infty]$ 
of the discrete path energy $\pathenergy_{K}$ is given by 
\beq
\continuouspathenergy_{K}[\imagecurve]=
\left\{
\begin{array}{cl}
\pathenergy^D_{K}[\imagevec,\defvec] & ;\; \text{if }\;\imagecurve = \interpolimage_{K}[\imagevec,\defvec]\;
\mbox{with }\; \imagevec \in (\imagespace)^{K+1} \mbox{ and } \\
& \;\; \defvec \mbox{ is a minimizer of } \pathenergy^D_{K}[\imagevec,\cdot] \mbox{ over } \admset^K \\
+ \infty & ;\; \text{else}
\end{array}
\right. \,.
\eeq
Now, we are in the position to discuss the $\Gamma$-convergence estimates. 
The statements of the theorem are sufficient to prove that subsequences of discrete geodesics converge to a continuous geodesic (cf. Theorem \ref{thm:convergence}).
\begin{theorem}[$\Gamma$-convergence estimates]\label{thm:gamma}
Under the assumptions (W1-3), the time discrete path energy $\continuouspathenergy_K$ $\Gamma$-converges to the time continuous path energy $\continuouspathenergy$ in the following sense.
The estimate $\liminf_{K\to\infty} \continuouspathenergy_K[\imagecurve^K]\geq 
\continuouspathenergy[\imagecurve]$ holds for every sequence $(\imagecurve^K)_{K\in \N}\subset L^2((0,1)\times D)$ with $\imagecurve^K\rightharpoonup \imagecurve$ (weakly) in 
$L^2((0,1)\times D)$. Furthermore, for $u\in L^2((0,1)\times D)$
there exists a sequence $(\imagecurve^K)_{K\in \N}\subset L^2((0,1)\times D)$ with $\imagecurve^K\rightarrow \imagecurve$ in 
$L^2((0,1)\times D)$ such that the estimate $\limsup_{K\to\infty} \continuouspathenergy_K[\imagecurve^K]\leq \continuouspathenergy[\imagecurve]$ holds.
\end{theorem}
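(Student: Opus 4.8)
The plan is to establish the two $\Gamma$-convergence inequalities separately, in both directions exploiting the consistency assumption (W3) to match the discrete dissipation and the discrete material derivative against their continuous counterparts.

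\emph{Lower bound.} I would first pass to a (non-relabelled) subsequence along which $\lim_K\continuouspathenergy_K[\imagecurve^K]$ exists and is finite; otherwise there is nothing to prove. Finiteness forces $\imagecurve^K=\interpolimage_K[\imagevec^K,\defvec^K]$ with $\defvec^K$ optimal, so $\continuouspathenergy_K[\imagecurve^K]=\pathenergy_K[\imagevec^K]\le C$. Setting $\tau=\tfrac1K$, $v_k^K:=K(\phi_k^K-\Id)$ and $z_k^K:=K(\image_k^K\circ\phi_k^K-\image_{k-1}^K)$, this energy bound reads
\begin{equation*}
\tau\sum_{k=1}^K\int_D \tfrac1{\tau^2}W(\Id+\tau Dv_k^K)+\gamma|D^m v_k^K|^2+\tfrac1\delta|z_k^K|^2\d x\le C.
\end{equation*}
The coercivity (W2), the consistency (W3), and Korn's and the Gagliardo--Nirenberg inequalities (as in Proposition~\ref{wellPosednessWEner}) then bound the piecewise-constant-in-time interpolations $\bar v^K(t):=v_k^K$ and $\bar z^K(t):=z_k^K$ on $(t_{k-1},t_k]$ uniformly in $L^2((0,1),W^{m,2}(D,\R^d)\cap W^{1,2}_0(D,\R^d))$ and in $L^2((0,1)\times D)$, respectively, the vanishing trace coming from $\phi_k^K=\Id$ on $\partial D$. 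I extract weak limits $v$ and $z$.

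\emph{Identification and semicontinuity.} The decisive step is to verify that $(v,z)$ is an admissible tangent field along $\imagecurve$, i.e.\ that \eqref{eq:weakmaterial1} holds. By construction the interpolant satisfies along the affine characteristics $y_k$ the exact relation $\tfrac{\d}{\d t}\imagecurve^K(t,y_k(t,x))=z_k^K(x)$, which in weak form says that $\imagecurve^K$ has weak material derivative $\bar z^K$ with respect to $\bar v^K$ up to a commutator that vanishes with $\tau$. Testing this identity against $\eta\in C^\infty_c((0,1)\times D)$ and passing to the limit, the linear terms converge by $\bar z^K\rightharpoonup z$ and $\imagecurve^K\rightharpoonup\imagecurve$; the only genuinely nonlinear term $\int_0^1\int_D\imagecurve^K\div(\eta\bar v^K)\d x\d t$ I handle by pairing \emph{strong} convergence of $\imagecurve^K$ in $L^2((0,1)\times D)$ against the weak convergence $\div(\eta\bar v^K)\rightharpoonup\div(\eta v)$. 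This yields \eqref{eq:weakmaterial1}, so $(v(t),z(t))\in T_{\imagecurve(t)}L^2(D)$ for a.e.\ $t$ and in particular $\imagecurve\in H^1((0,1),L^2(D))$. A Taylor expansion of $W$ at $\Id$ controlled by (W3), together with the polyconvexity (W1) and weak lower semicontinuity of $v\mapsto\int\gamma|D^m v|^2$ and $z\mapsto\int\frac1\delta z^2$, then gives $\liminf_K\continuouspathenergy_K[\imagecurve^K]\ge\int_0^1\int_D L[v,v]+\frac1\delta z^2\d x\d t\ge\continuouspathenergy[\imagecurve]$, the last inequality because $(v,z)$ is a competitor in the pointwise infimum defining $\continuouspathenergy$.

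\emph{Upper bound.} Here I may assume $\continuouspathenergy[\imagecurve]<\infty$, so $\imagecurve$ admits an optimal tangent field $(v,z)$. By a density/mollification argument I first reduce to a path with spatially and temporally smooth velocity and images, controlling the energy increment. I then set $\image_k:=\imagecurve(t_k)$ and choose trial deformations $\phi_k$ as the time-$\tau$ flow maps of $v$ over $[t_{k-1},t_k]$, so that $\phi_k=\Id+\tau v(t_{k-1})+o(\tau)$ in $C^{1,\alpha}$ and $\phi_k=\Id$ on $\partial D$. Using $\pathenergy_K[\imagevec]\le\ESPhi{K}[\imagevec,\defvec]$ for this admissible $\defvec$, the expansion (W3) gives $K\int_D W(D\phi_k)\d x\to\int_{t_{k-1}}^{t_k}\int_D\frac\lambda2(\tr\varepsilon[v])^2+\mu\tr(\varepsilon[v]^2)\d x\d t$, while the higher-order and matching terms form Riemann sums converging to $\int_0^1\int_D\gamma|D^m v|^2$ and $\int_0^1\int_D\frac1\delta z^2$. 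This yields $\limsup_K\continuouspathenergy_K[\imagecurve^K]\le\continuouspathenergy[\imagecurve]$, while \eqref{eq:ImagecurveRep} shows $\imagecurve^K\to\imagecurve$ strongly in $L^2((0,1)\times D)$, a final diagonal argument removing the smoothing.

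\emph{Main obstacle.} I expect the hard part to lie entirely in the lower bound, specifically in passing to the limit in the nonlinear term $\imagecurve^K\div(\eta\bar v^K)$: this needs strong $L^2$-compactness of the interpolants $\imagecurve^K$, which does not follow from the weak hypothesis and must be produced from the uniform velocity and discrete material-derivative bounds (an Aubin--Lions type argument with the subtlety that the images are only square-integrable in space). A secondary difficulty is the uniform control of the Taylor remainder of the non-quadratic, polyconvex $W$ under the rescaling $B\mapsto\tfrac1{\tau^2}W(\Id+\tau B)$, where individual $\|Dv_k^K\|_\infty$ need not be small; here one relies on lower semicontinuity under the rescaled integrand rather than on pointwise remainder estimates. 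On the upper-bound side the only real care is in constructing deformations whose matching energy reproduces $\frac1\delta z^2$ for merely $L^2$ images, which the density reduction is designed to circumvent.
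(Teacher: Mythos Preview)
Your overall architecture is right, but the proposed resolution of what you correctly flag as the main obstacle will not go through. In the liminf part you want to pass to the limit in $\int_0^1\int_D \imagecurve^K\,\div(\eta\,\bar v^K)\d x\d t$ by pairing strong $L^2$-compactness of $\imagecurve^K$ with weak convergence of $\div(\eta\,\bar v^K)$, and you suggest obtaining the former from an Aubin--Lions argument. This cannot work as stated: the images $\imagecurve^K(t,\cdot)$ carry no spatial regularity beyond $L^2(D)$, so there is no compact embedding $X\hookrightarrow L^2(D)$ to feed into Aubin--Lions, and the ``time derivative'' you control is a material derivative along a moving flow, not a genuine $\partial_t$ in a fixed Banach space. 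The paper's route is precisely designed to avoid this: instead of staying in Eulerian coordinates one passes to the equivalent Lagrangian integral formulation $\imagecurve^K(t,x)=\imagecurve^K(0,\psi^K_{t,0}(x))+\int_0^t z^K(s,\psi^K_{t,s}(x))\d s$ and shows that the discrete flow maps $\psi^K$ are uniformly bounded in $C^{0,1/2}([0,1],C^{1,\alpha}(\bar D))$ (time regularity comes from integrating $v^K$), hence strongly compact. After a change of variables the product becomes (weakly convergent $\imagecurve^K$ or $z^K$) $\times$ (strongly convergent function of $\psi^K$), and the limit identification goes through. A byproduct one actually needs is the pointwise-in-time weak convergence $\imagecurve^K(t)\rightharpoonup\imagecurve(t)$ in $L^2(D)$ for every $t$, which the paper extracts from the same Lagrangian estimate; mere weak convergence in $L^2((0,1)\times D)$ is not enough here.

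Two smaller points. First, your worry about the Taylor remainder is unfounded: from the energy bound one gets $\|\phi_k^K-\Id\|_{C^1}\le C K^{-1/2}$ uniformly in $k$, so $K^{-2}\sum_k\int_D|Dv_k^K|^3\d x\le C K^{-1/2}$ and a pointwise third-order remainder estimate suffices; no abstract lower-semicontinuity of the rescaled integrand is needed. Second, your recovery sequence differs from the paper's: you mollify and take $\image_k=\imagecurve(t_k)$ with $\phi_k$ the exact flow, whereas the paper takes $v_k^K$ to be the \emph{time averages} $K\int_{t_{k-1}}^{t_k}v\,\d t$ and defines $\image_k^K$ by the discrete integral formula along the resulting flow. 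The averaging is what makes Jensen's inequality deliver $\int_D L[v_k^K,v_k^K]\d x\le K\int_{t_{k-1}}^{t_k}\int_D L[v,v]\d x\d t$ directly for non-smooth $v\in L^2((0,1),W^{m,2})$, and the integral definition of $\image_k^K$ makes the matching term reproduce $\int z^2$ without any density step. Your mollification route can be made to work, but you then owe a stability statement for $\continuouspathenergy$ under that approximation, which is not obvious for merely $L^2$ images and is exactly what the paper's construction sidesteps.
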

\medskip

Let us at first briefly outline the structure of the proof to facilitate the reading.
The proof itself refers to the outline with corresponding paragraph headlines.
To verify the $\liminf$ estimate we proceed as follows:
\begin{itemize}
\item[(i)] {\it Reconstruction of a flow and a weak material derivative.} 
For a sequence of images $\imagecurve^{K} = \interpolimage_K[\imagevec^K,\defvec^K]$ in $L^2((0,1)\times D)$ with 
$\imagevec^K=(\image_0^K,\ldots,\image_K^K) \in (\imagespace)^{K+1}$, we consider a set of associated optimal matching deformations and construct the induced 
underlying motion field, for which the mismatch energy turns out to be the weak material derivative in the limit.
\item[(ii)] {\it Weak lower semicontinuity of the path energy.}
Using a priori bounds for the sequence of motion fields and material derivatives, we obtain weakly convergent subsequences and using a Taylor expansion of 
the energy density function $W$, we show a lower semicontinuity result required for the $\liminf$ inequality.
\item[(iii)] {\it Identification of the limit of the material derivatives as the material derivative for the limit image sequence.}
We still have to show that the pair of the weak limits of the velocity fields and the material derivatives is indeed an instance of a tangent vector at the limit image.
The core insight is that instead of taking the limit in the defining equation \eqref{eq:weakmaterial1} of the weak material derivative in Eulerian coordinates 
one has to use an equivalent flow formulation in Lagrangian coordinates.
\item[(iv)] {\it Convergence of the discrete image sequences pointwise everywhere in time.} In step (iii) we need that an image sequence with bounded path energy  
converges not only weakly in $L^2((0,1)\times D)$, but for every time $t\in [0,1]$ the image sequence evaluated at that time converges already weakly in $L^2(D)$. 
We use a trace theorem type argument to verify this.
\end{itemize}
The proof of the $\limsup$ estimate consists of the following steps:
\begin{itemize}
\item[(i)] {\it Construction of the recovery sequence.}  The key observation is that the construction of a recovery sequence is not based on some (time-averaged) interpolation of the given image path $\imagecurve \in L^2((0,1)\times D)$. In fact, one considers for fixed $K$ a local time averaging of an underlying motion field leading to a bounded path energy,
and constructs from this via integration of the associated material derivative along the induced transport path a discrete family of images $(u^K_0,\cdots, u^K_K)$.
\item[(ii)] {\it Proof of the the $\limsup$ inequality.} The key ingredient for the proof of the $\limsup$ inequality is the convexity of the total viscous dissipative functional, 
which we exploit based on the above construction of the recovery sequence via an application of Jensen's inequality. This requires that the discrete motion fields are 
indeed defined via local time averaging of the given continuous motion field. Furthermore, we again use a Taylor expansion of the energy density function $W$.
\item[(iii)] {\it Convergence of the discrete image sequences.}
Due to the fact that the recovery sequence of images $(\imagecurve ^K)_{K\in \N}$ is defined via integration of the material derivative and not by simple time averaging, 
we are still left to verify that $\imagecurve^K$ converges to $\imagecurve$ in  $L^2((0,1)\times D)$. 
\end{itemize}
\medskip
 
\clearpage

\begin{proof}
Throughout the proof we will use a generic constant $C$ independent of $K$.\medskip

{\it The liminf---estimate:}\medskip

{\it (i) Reconstruction of a flow and a weak material derivative.}
Let $\{\imagecurve^{K}\}_{K\in\N}\subset L^2((0,1)\times D)$ be any sequence of images
that converges weakly in $L^2((0,1)\times D)$ to $\imagecurve\in L^2((0,1)\times D)$. To exclude trivial
cases, \ie $\liminf_{K\to\infty} \continuouspathenergy_K[\imagecurve^K]=\infty$, we may assume $\continuouspathenergy_K[\imagecurve^{K}]\leq \overline\continuouspathenergy<\infty$ for all $K\in\N$, which implies $\imagecurve^{K} = \interpolimage_K[\imagevec^K,\defvec^K]$ for $\imagevec^K=(\image_0^K,\ldots,\image_K^K) \in (\imagespace)^{K+1}$ and
an associated vector of deformations 
$\defvec^K=(\phi^K_1, \ldots, \phi^K_K)$, which is defined as a vector of (not necessarily unique) solutions of the pairwise matching problems \eqref{eq:WEnerDefinition}. 
Each $\defvec^K$ generates on each time interval $[t_{k-1},t_k)$ affine transport paths with motion velocity $\tilde v^K_k(t,y)=K(\phi^K_k-\Id)(x^K_k(t,y))$. Here, we use the notation $t_k=\tfrac{k}{K}$ (for the sake of brevity without explicit reference to the  sequence index $K$) and $x^K_k$ is the above defined pullback associated with the deformation $\phi^K_k$ on the interval $[t_{k-1},t_{k}]$.
As it will be shown below in \eqref{eq:uniformBoundPhi}, for sufficiently large $K$ a piecewise affine reconstruction of 
$\imagecurve^K$ along straight line segments from $x$ to $\phi^K_k(x)$  can be performed using \eqref{eq:ImagecurveRep}. Thus, the difference quotient $K \left(\image^K_{k}(\phi^K_{k}(x))-\image^K_{k-1}(x)\right)$ is the material derivative of $\imagecurve^K$ for all $y^k(t,x)$ with $t\in (t_{k-1},t_k)$, \ie
\begin{equation}
z^K(t,y)=\frac{\d}{\d s} \imagecurve^{K}(t+s,y+s \tilde v^{K}_k(t,y))\big|_{s=0}=K 
\left(\image^K_{k}\circ\phi^K_{k}-\image^K_{k-1}\right)(x^K_k(t,y))
\label{eq:material}
\end{equation}
is the classical material derivative of $\imagecurve^K$.
Hence, the regularity of $\defvec^K$ stated in Proposition \ref{wellPosednessWEner} implies that  $z^K$ fulfills the equation for the weak material derivative \eqref{eq:definitionMaterialDerivative}, \ie
\beqn\label{eq:weakmaterial}
\int_D \int_0^1 z^K\vartheta\d t \d x
=-\int_D \int_0^1 (\partial_t\vartheta + \operatorname{div}(\tilde v^K\vartheta))\imagecurve^K \d t \d x
\eeqn
for all $\vartheta\in W^{1,2}_0((0,1)\times D)$ and with $\tilde v^K(t,y)=\tilde v^K_k(t,y)$ for $t\in [t_{k-1},t_{k})$.
Let us remark that $\tilde v^K(t,\cdot)$ vanishes on the boundary $\partial D$ for $t\in(0,1)$, which corresponds to the assumption on the continuous velocity $v$ in the metamorphosis model from the introduction.
As a next step we show
\beqn
\lim_{K\rightarrow\infty}\int_D \int_0^1 \left|z^K\right|^2\d t\d x=
\lim_{K\rightarrow\infty}K\sum_{k=1}^K \int_D |\image^K_{k}\circ\phi^K_{k}-\image^K_{k-1}|^2 \d x\,.
\label{ReexpressSquareMaterialDerivative}
\eeqn
Indeed, using \eqref{eq:material} one obtains 
\begin{align*}
&\int_D \int_{t_{k-1}}^{t_{k}}\left|z^K\right|^2\d t\d x=
\int_D \int_{t_{k-1}}^{t_{k}} K^2\left(\left(\image^K_{k}\circ\phi^K_{k}-\image^K_{k-1}\right)(x^K_k(t,x))\right)^2\d t\d x\\
=&\int_D \int_{t_{k-1}}^{t_{k}} K^2\left(\left(\image^K_{k}\circ\phi_{k}-\image^K_{k-1}\right)(x)\right)^2
\det D y_k^K(t,x)\d t\d x\,,
\end{align*}
where $D y_k^K(t,x) = \Id+K(t-t_{k-1})(D\phi^K_k(x)-\Id)$. From the uniform bound on the energy, we deduce
\begin{equation}
\sum_{k=1}^K \int_D K(\image^K_k\circ\phi^K_k-\image^K_{k-1})^2\d x \leq \delta\overline\continuouspathenergy\,.\label{eq:defestimate}
\end{equation}
Furthermore, we can estimate 
\begin{equation*} 
\funcnorm{\det\!\left(\Id+K(\cdot \!-\!t_{k-1})(D\phi^K_k\!-\!\Id)\right)\!-\!1}{L^{\infty}((\frac{k-1}{K},\frac{k}{K})\times D)} \leq{} C \|\phi^K_k-\Id\|_{C^1(\bar D)}\,.
\end{equation*}
The Sobolev estimate $\funcnorm{\phi-\Id}{C^{1,\alpha}(\bar D)} \leq C \funcnorm{\phi-\Id}{W^{m,2}(D)}$ for $\alpha \leq 
m-1-\tfrac{d}{2}$ and the Gagliardo-Nirenberg interpolation inequality $\funcnorm{\phi-\Id}{W^{m,2}(D)}\leq C\funcnorm{D^m\phi}{L^2(D)}$ (cf. \cite{Ni66}) for $\phi \in W^{m,2}(D)\cap W^{1,2}_0(D)$ imply
\begin{equation} \label{eq:uniformBoundPhi}
\funcnorm{\phi^K_k-\Id}{C^{1,\alpha}(\bar D)}^2
\leq \sum_{l=1}^K C\funcnorm{D^m \phi^K_l}{L^{2}(D)}^2
\leq\frac{C\overline\continuouspathenergy}{\gamma K}\,.
\end{equation}
Together with \eqref{eq:defestimate} this proves \eqref{ReexpressSquareMaterialDerivative}.

{\it(ii)  Weak lower semicontinuity of the path energy.}
Next, from  \eqref{eq:defestimate} and \eqref{ReexpressSquareMaterialDerivative} we deduce that the material derivatives $z^K$ are uniformly bounded in $L^2((0,1)\times D)$ independent of $K$.
Thus, there exists a subsequence, again denoted by $(z^K)_{K\in \N}$, which converges weakly in $L^2((0,1)\times D)$ to some $z\in L^2((0,1)\times D)$ as 
$K\rightarrow\infty$. By the lower semicontinuity of the $L^2$-norm, one achieves 
\beq
\int_D \int_0^1 \left|z\right|^2\d t\d x\leq
\liminf_{K\rightarrow\infty}\int_D \int_0^1 \left|z^K\right|^2\d t\d x\,.
\eeq
Now, we will prove that there exists a velocity field $v\in L^2((0,1),W^{1,2}_0(D)\cap W^{m,2}(D))$ such that $\overline{(v,z)}\in T_{\imagecurve}L^2$ and
\beq
\int_0^1\int_D  L[v,v] \d x\d t \leq
\liminf_{K\rightarrow\infty}
K\sum_{k=1}^K
\int_D W(D \phi_k^K) + \gamma |D^m \phi_k^K|^2\d x\,.
\eeq
The second order Taylor expansion around $t_{k-1}$ of the function 
$t\mapsto W(\Id+(t-t_{k-1})Dv_k^K)$ at  $t=t_k$ gives
\begin{align*}
W(D \phi^K_k)=&W(\Id)+\frac{1}{K}DW(\Id)(Dv^K_k)+\frac{1}{2K^2}D^{2}W(\Id)(Dv^K_k,Dv^K_k)+O(K^{-3}|Dv^K_k|^3)\\
=&\frac{1}{K^2}\!\left(\frac{\lambda}{2}\left(\tr\varepsilon[v^K_k]\right)^2+\mu\tr(\varepsilon[v^K_k]^2)\right)+O(K^{-3}|Dv^K_k|^3)
\end{align*}
with $v^K_k(x) = K (\phi^K_k(x)-x)$. The second equality follows from (W3).
Then 
\begin{eqnarray*}
&& K\sum_{k=1}^K \int_D W(D \phi_k^K) + \gamma |D^m \phi_k^K|^2\d x \\
&& \leq \frac1K \sum_{k=1}^K \int_D \frac{\lambda}{2}(\tr \varepsilon[v^K_k])^2+\mu\tr(\varepsilon[v^K_k]^2)+\gamma\left|D^m v^K_k\right|^2\d x
+ C \sum_{k=1}^K K\int_D \!\! K^{-3}|Dv_k^K|^3 \d x.
\end{eqnarray*}
The last term is of order $K^{-\frac{1}{2}}$, which follows from the boundedness of the energy and by applying \eqref{eq:uniformBoundPhi}, \ie
\begin{align*}
\sum_{k=1}^K \!K\!\int_D \! K^{-3}|Dv^K_k|^3 \d x
\leq C \! \max_{k=1,\ldots,K}\|\phi_k^K-\Id\|_{C^1(\bar{D})}
\sum_{k=1}^K K\!\funcnorm{\phi_k^K\!-\!\Id}{W^{m,2}(D)}^2
\leq C K^{-\frac{1}{2}}.
\label{eq:miscOrderEstimate}
\end{align*}
Next, for $K\to \infty$ the limes inferior of the remainder can be estimated as follows.
We define $v^K \in L^2((0,1)\times D)$ via $v^K(t,\cdot) = v^K_k$ for $t\in [t_{k-1},t_k)$.
Due to the uniform bound of the discrete path energy $v^K$ is uniformly bounded in $L^2((0,1),W^{m,2}(D))$ and up to the selection of a subsequence $v^K$
converges weakly in $L^2((0,1),W^{m,2}(D,\R^d) \cap W^{1,2}_{0}(D,\R^d))$ to some $v\in L^2((0,1),W^{m,2}(D,\R^d) \cap W^{1,2}_{0}(D,\R^d))$ for $K\to \infty$.
Then, by a standard weak lower semicontinuity argument we obtain
\beqa
&&\liminf_{K\to \infty} \frac1K \sum_{k=1}^K \int_D \frac{\lambda}{2}(\tr \varepsilon[v_k^K])^2+\mu\tr(\varepsilon[v_k^K]^2)+\gamma\left|D^m v_k^K\right|^2 \d x \\
&& = \liminf_{K\to \infty} \int_0^1 \int_D \frac{\lambda}{2}(\tr \varepsilon[v^K])^2+\mu\tr(\varepsilon[v^K]^2)+\gamma\left|D^m v^K\right|^2 \d x \d t\\
&& \geq \int_0^1 \int_D \frac{\lambda}{2}(\tr \varepsilon[v])^2+\mu\tr(\varepsilon[v]^2)+\gamma\left|D^m  v\right|^2 \d x \d t\,.
\eeqa

{\it (iii) Identification of the limit of the material derivatives as the material derivative for the limit image sequence.}
It remains to verify that we can pass to the limit in \eqref{eq:weakmaterial} for $K\to \infty$ with $v$ also being  the weak limit of $\tilde v^K$ in $L^2((0,1)\times D)$. This will indeed imply that 
$z$ is the weak material derivative for the image path $\imagecurve$ and the velocity field $v$ fulfilling \eqref{eq:weakmaterial1}
and hence $\overline{(v,z)} \in T_\imagecurve L^2(D)$.
To this end, the main difficulty is to prove the weak continuity of $(u,v) \mapsto u\div(v\eta)$. 
In  \cite[Theorem 2]{TrYo05a} (with the essential ingredient, which we actually required here, given in \cite[Lemma 6]{TrYo05a}) it is shown that 
for the family of diffeomorphisms $\psi:[0,1] \to C^{1}(\bar D)$  
resulting from the transport 
\beqn
\dot \psi(t,\cdot) = v(t,\psi(t,\cdot)) \label{eq:ODE}
\eeqn
for some velocity field $v\in L^2((0,1),W^{m,2}(D) \cap W^{1,2}_0(D))$ and for given 
initial data $\psi(0) = \Id$ the integral formula \beqn\label{eq:Int}
\imagecurve(t,x) = \imagecurve(0,\psi_{t,0}(x)) + \int_0^t z(s,\psi_{t,s}(x)) \d s 
\eeqn
for an image path $u$,  a function $z \in L^2((0,1),L^2(D))$ 
and for a.\,e. $x\in D$ with $\psi_{t,s} = \psi(s,(\psi(t,\cdot))^{-1})$ is equivalent to \eqref{eq:weakmaterial1}. 
We refer to \cite[Lemma 2.2]{DuGrMi98} for the existence of a unique solution $\psi$ of \eqref{eq:ODE}.
From \eqref{eq:material} we deduce that $(\imagecurve^K,\tilde v^K, z^K)$ obeys 
\beqn\label{eq:discreteInt}
\imagecurve^K(t,x) = \imagecurve^K(0,\psi^K_{t,0}(x)) + \int_0^t z^K(s,\psi^K_{t,s}(x)) \d s\,,
\eeqn
where $\psi^K_{t,s} = \psi^K(s,(\psi^K)^{-1}(t,\cdot))$ with $\psi^K:[0,1] \to C^{1}(\bar D)$ denoting the time discrete family of diffeomorphisms induced by the motion field $\tilde v^K$ 
and solving  
\beqn
\dot \psi^K(t,x) = \tilde v^K(t, \psi^K(t,x)) \label{eq:discreteODE}
\eeqn
for all $x\in D$. 
In what follows, we will show strong convergence of $\psi^K$ to $\psi$, for which \eqref{eq:ODE} holds.
At first, we observe that $\funcnorm{y^K_k(t,\cdot)}{C^{1,\alpha}(\bar D)}\leq C (1+K^{-1} \funcnorm{v^K_k(t,\cdot)}{C^{1,\alpha}(\bar D)})$ for  
$y^K_k(t,x) = x + (t-t_k) v^K_k(x)$ and $t\in [t_{k-1},t_k)$.
By Sard's theorem in H\"older spaces \cite{BoHaSt05} and \eqref{eq:uniformBoundPhi} we deduce that $\funcnorm{x^K_k(t,\cdot)}{C^{1,\alpha}(\bar D)}\leq C (1+K^{-1} \funcnorm{v^K_k(t,\cdot)}{C^{1,\alpha}(\bar D)})$ 
for the inverse $x^K_k(t,\cdot) = y^K_k(t,\cdot)^{-1}$. 
Using the definition of $\tilde v^K_k$, the $C^{1,\alpha}$-estimate for the concatenation of $C^{1,\alpha}$-functions, and \eqref{eq:uniformBoundPhi} we get 
\beqa
\funcnorm{\tilde v^K_k(t,\cdot)}{C^{1,\alpha}(\bar D)} &\leq& C \funcnorm{v^K_k(t,\cdot)}{C^{1,\alpha}(\bar D)} 
\left(1+K^{-1} \funcnorm{v^K_k(t,\cdot)}{C^{1,\alpha}(\bar D)}\right)\\ & \leq& C  \funcnorm{v^K_k(t,\cdot)}{C^{1,\alpha}(\bar D)}\,.
\eeqa 
The uniform boundedness of $v^K$ in $L^2((0,1),W^{m,2}(D))$ and the continuity of the embedding of $W^{m,2}(D)$ into $C^{1,\alpha}(\bar D)$ imply 
that $\tilde v^K$ is uniformly bounded in $L^2((0,1),C^{1,\alpha}(\bar D))$.  Following \cite[Lemma 7]{TrYo05a} (in a straightforward generalization for velocities uniformly bounded in
$L^1((0,1),C^{1,\alpha}(\bar D))$) one shows via  Gronwall's inequality that $\psi^K$ defined in \eqref{eq:discreteODE} is uniformly bounded in $L^\infty((0,1),C^{1,\alpha}(\bar D))$.
Finally, using this bound and once again the $C^{1,\alpha}$-estimate for the concatenation of $C^{1,\alpha}$-functions we obtain from \eqref{eq:discreteODE} the estimate 
\beqa
\funcnorm{\psi^K(t,\cdot)-\psi^K(s,\cdot)}{C^{1,\alpha}(\bar D)} &\leq& C \int_s^t \funcnorm{v^K(r,\cdot)}{C^{1,\alpha}(\bar D)} \d r \\
&\leq& C (t-s)^{\frac12} \left(\int_s^t  \funcnorm{v^K(r,\cdot)}{C^{1,\alpha}(\bar D)}^2 \d r\right)^{\frac12} 
\leq C (t-s)^{\frac12}\,,
\eeqa
which proves that $\psi^K$ is uniformly bounded in $C^{0,\frac12}([0,1], C^{1,\alpha}(\bar D))$. Thus, for  some $\beta$ with $0<\beta< \min \{\frac12, \alpha\}$ and up to the selection of a subsequence $\psi^K$ converges strongly  in $C^{0,\beta}([0,1], C^{1,\beta}(\bar D))$ to some $\psi \in C^{0,\frac12}([0,1], C^{1,\alpha}(\bar D))$ and $\psi$ solves \eqref{eq:ODE} (cf. \cite[Theorem 9]{TrYo05a}). 
The mapping $\left(t\mapsto(\psi^K(t,\cdot))^{-1}\right)_{K\in \N}$, which solves \eqref{eq:discreteODE} backward in time, is uniformly bounded in  $C^{0,\frac12}([0,1], C^{1,\alpha}(\bar D))$
(cf.  \cite[Lemma 9]{TrYo05a}).
Next, we obtain from \eqref{eq:discreteInt} for functions $u^K$ with bounded energy $\continuouspathenergy_{K}$ the following estimate:
\beqan
&&\funcnorm{u^K(t+\tau, \psi^K(t+\tau,\cdot))-u^K(t, \psi^K(t,\cdot))}{L^2(D)}^2 \nonumber \\
&&  \leq \int_D \left( \int_t^{t+\tau}  z^K(s,\psi^K(s,x))  \d s\right)^2 \d x \nonumber \\
&& \leq \tau \funcnorm{\det D ((\psi^K)^{-1})}{L^\infty((0,1)\times D)} 
\int_t^{t+\tau}\!\!\!\!\! \funcnorm{z^K(s,\cdot)}{L^2(D)}^2   \d s \nonumber \\
&& \leq C \tau  \funcnorm{z^K}{L^2((0,1)\times D)}^2 \leq C \tau 
\label{eq:TimeBoundImages}
\eeqan
for all $t \geq 0$, $\tau >0$ with $t+\tau \leq 1$. The analogous estimate holds for $u^K$, $\psi^K$, and $z^K$ replaced by $u$, $\psi$, and $z$, respectively (cf. \cite{TrYo05a}).
From this and the uniform smoothness of $\psi^K$ and $\psi$ we deduce that  
for a subsequence (again denoted by $(u^K)_{K\in \N}$)  $u^K(t)\rightharpoonup u(t)$ weakly in $L^2(D)$ for all $t\in [0,1]$.
A detailed verification is given in the last step of the proof below.
Then, multiplying \eqref{eq:discreteInt} with a test function $\eta \in C^{\infty}_c(D)$ and integrating over $D$ yields
\beqan
0 &=& \int_D \imagecurve^K(t,x) \eta(x) \d x - \int_D \imagecurve^K(0,\psi^K_{t,0}(x)) \eta(x) \d x -
\int_0^t \int_D z^K(s, \psi^K_{t,s}(x)) \eta(x) \d x  d s \nonumber\\
&=& \int_D \imagecurve^K(t,x) \eta(x) \d x - \int_D \imagecurve^K(0,y) \eta((\psi^K_{t,0})^{-1}(y)) (\det D \psi^K_{t,0})^{-1} ((\psi_{t,0}^K)^{-1}(y)) \d y \nonumber \\
&& - \int_0^t \int_D z^K(s, y) \eta((\psi^K_{t,s})^{-1}(y)) (\det D \psi^K_{t,s})^{-1} ((\psi_{t,s}^K)^{-1}(y))\d y \d s\,. \label{eq:materialdiscrete}
\eeqan
Based on the weak convergence of $u^K,\, z^K$ and the strong convergence of $t\mapsto (\psi^K(t,\cdot))^{-1}$ and 
$\psi^K$ we can pass to the limit in \eqref{eq:materialdiscrete} and obtain
\beqa
0&=&\int_D \imagecurve(t,x) \eta(x) \d x - \int_D \imagecurve(0,y)\, \eta((\psi_{t,0})^{-1}(y)) \,(\det D \psi_{t,0})^{-1}((\psi_{t,0})^{-1}(y)) \d y \\
&& - \int_0^t \int_D z(s, y)\, \eta((\psi_{t,s})^{-1}(y))\, (\det D \psi_{t,s})^{-1}((\psi_{t,s})^{-1}(y)) \d y \d s\\
&=& \int_D \imagecurve(t,x) \eta(x) \d x - \int_D \imagecurve(0,\psi_{t,0}(x)) \eta(x) \d x -
\int_0^t \int_D z(s, \psi_{t,s}(x)) \eta(x) \d x  d s\,,
\eeqa
which shows that $u$ and $z$ fulfill \eqref{eq:Int} for a.\,e. $x\in D$. Since \eqref{eq:Int} is equivalent to \eqref{eq:weakmaterial1}, this finally proves \eqref{eq:weakmaterial1}.

{\it (iv) Convergence of the discrete image sequences pointwise everywhere in time.}
It remains to prove that for a subsequence of the discrete intensity functions $u^K$ (again denoted by $(u^K)_{K\in \N}$)  $u^K(t)\rightharpoonup u(t)$ weakly in $L^2(D)$ for all $t\in [0,1]$. To this end consider an arbitrary test function $\eta\in C^\infty_c(D)$, $t\in (0,1)$ and $\tau>0$ sufficiently small (in the what follows for $t=0$: $t-\tau$ is replaced by $t$ and for $t=1$: $t+\tau$ is replaced by $t$). Then, we obtain
\beqan
&&\int_D \left(\imagecurve^K(t,x)-\imagecurve(t,x)\right)\eta(x)\d x \nonumber\\
&&= \dashint_{t-\tau}^{t+\tau}\int_D\left(\imagecurve^K(t,x)-\imagecurve^K(s,x)\right)\eta(x)- \left(\imagecurve(t,x)-\imagecurve(s,x)\right)\eta(x)\d x\d s\notag\\
&&\quad +\; \dashint_{t-\tau}^{t+\tau}\int_D\left(\imagecurve^K(s,x)-\imagecurve(s,x)\right)\eta(x)\d x\d s\,. \label{eq:L2WeakConvergenceInTime}
\eeqan
Here, $\dashint_{t-\tau}^{t+\tau} f(s) \d s = \tfrac1{2\tau} \int_{t-\tau}^{t+\tau} f(s) \d s$ is the time-averaged integral of $f$ on $(t-\tau,t+\tau)$.
Due to the weak convergence of $u^K\rightharpoonup u$ in $L^2((0,1) \times D)$ the second integral on the right-hand side of \eqref{eq:L2WeakConvergenceInTime} 
vanishes as $K\rightarrow\infty$. Setting 
\[
\tilde\eta^K(t,y)=\eta(\psi^K(t,y))\det D\psi^K(t,y)\,, \quad  \tilde\eta(t,y)=\eta(\psi(t,y))\det D\psi(t,y)
\]
we can rewrite the first term in the first integral on the right-hand side of \eqref{eq:L2WeakConvergenceInTime} and get
\beqan
&&\dashint_{t-\tau}^{t+\tau}\int_D\imagecurve^K(t,\psi^K(t,y))\tilde\eta^K(t,y)-\imagecurve^K(s,\psi^K(s,x))\tilde\eta^K(s,y)\d y\d s \notag\\
&&=\dashint_{t-\tau}^{t+\tau}\int_D \left(\imagecurve^K(t,\psi^K(t,y))-\imagecurve^K(s,\psi^K(s,y))\right)\tilde\eta^K(t,y)\d y\d s \notag\\
&&\quad +\dashint_{t-\tau}^{t+\tau}\int_D\imagecurve^K(s,\psi^K(s,y))\left(\tilde\eta^K(t,y)- \tilde\eta^K(s,y)\right)\d y\d s\,.
\label{eq:eta}
\eeqan
The second integral on the right-hand side of \eqref{eq:eta} vanishes due to the smoothness of $\eta$ and $\psi^K$ as $\tau\rightarrow 0$.
Furthermore, using \eqref{eq:TimeBoundImages} the first integral can be estimated by   
\beqa
&&\left|\dashint_{t-\tau}^{t+\tau}\int_D \left(\imagecurve^K(t,\psi^K(t,y))-\imagecurve^K(s,\psi^K(s,y))\right)\tilde\eta^K(t,y)\d y\d s\right|\\
&& \leq \sup_{s\in[t-\tau, t+\tau]} 
\funcnorm{u^K(t, \psi^K(t,\cdot))-u^K(s, \psi^K(s,\cdot))}{L^2(D)}
\funcnorm{ \tilde\eta^K(t,\cdot)}{L^2(D)}\\
&& \leq C \tau^{\frac12} \funcnorm{ \tilde\eta^K(t,\cdot)}{L^2(D)}\,,
\eeqa
and thus also vanishes for $\tau \to 0$.
Analogous estimates apply to the remaining expression in \eqref{eq:L2WeakConvergenceInTime} replacing $\imagecurve^K$,
$\tilde\eta^K$, and $\psi^K$ by $\imagecurve$, $\tilde\eta$, and $\psi$, respectively. 
Altogether, this proves $u^K(t)\rightharpoonup u(t)$ weakly in $L^2(D)$ for all $t\in [0,1]$.\\[1ex]
{\it The limsup---estimate:}\medskip

{\it (i) Construction of the recovery sequence.}
Consider an image curve  $\imagecurve\in L^2((0,1)\times D)$.
Without any restriction we assume that the energy
\beq
\continuouspathenergy[u] = \int_0^1 \int_D L[v,v] + \frac1\delta |z|^2 \d x \d t
\eeq
is bounded, where $v \in L^2((0,1), W^{m,2}(D) \cap W^{1,2}_0(D))$ and $z \in L^2((0,1)\times D)$ are an optimal velocity field and a corresponding weak material derivative, respectively.
Now, we define an approximate, piecewise constant (in time) velocity field  
\begin{equation*} 
\left. v^K\right|_{[t_{k-1},t_k)} =v^K_k := K \int_{t_{k-1}}^{t_k} v \d t
\end{equation*}
for $k=1,\ldots,K$ and again denoting $t_k = \tfrac{k}{K}$. 

Obviously, $v^K$ converges to $v$ in $L^2((0,1),W^{m,2}(D))$. 
We denote by $\psi^K$ the associated flow of diffeomorphism generated by the flow equation $\dot \psi^K(t,x) = \tilde v^K(t,\psi^K(t,x))$
as in \eqref{eq:discreteODE} (for $\tilde v^K$ deduced from $\phi^K_k = \Id + K^{-1} v^K_k$)
with $\psi^K(0,x) = x$ and by $\psi^K_{t,s} = \psi^K(s,(\psi^K)^{-1}(t,\cdot))$ the induced relative deformation from time $t$ to time $s$. 
From this, we also obtain the underlying vector of consecutive deformations $\defvec^K = (\phi^K_1, \ldots, \phi^K_K)$ with $\phi^K_k = \psi^K_{t_{k-1},t_{k}}$.
Following \cite{DuGrMi98} we easily verify that the evolution equation for $\psi^K$, the uniform smoothness of $\psi^K$ and the bound on the energy $\continuouspathenergy[u]$ 
imply that $\psi^K$ is uniformly bounded in $C^{0,\frac12}([0,1],C^{1,\alpha}(\bar D))$ (cf. the proof of the $\liminf$-estimate above).

Next, the approximate discrete image path $\imagevec^K=(u^K_0,\ldots, u^K_K)$ is defined by a discrete counterpart of \eqref{eq:Int}, namely
\begin{equation} \label{eq:intK}
\imagecurve^K_k(x) = \imagecurve(0,\psi^K_{t_k,0}(x)) + \int_0^{t_k} z(s,\psi^K_{t_k,s}(x)) \d s
\end{equation}
for $k=0,\ldots, K$. 
Using \eqref{eq:ImagecurveRep} one obtains $u^K =  \interpolimage_{K}[\imagevec^K,\defvec^K]$ as the requested approximation of $u$ for given $K\in \N$.

{\it (ii) Proof of the the $\limsup$ inequality.}
At first, we verify that $\limsup_{K\to\infty} \continuouspathenergy_K[\imagecurve^K]\leq \continuouspathenergy[\imagecurve]$.
From the minimizing property of $\interpolimage_{K}[\imagevec^K,\defvec^K]$ we deduce
\begin{eqnarray*}
\continuouspathenergy_K[\imagecurve^K] = \pathenergy_{K}[\imagevec^K] \leq K \sum_{k=1}^K \int_D W(D\phi^K_k) + \gamma|D^m\phi^K_k|^2 + \frac1\delta |u^K_k\circ \phi^K_k - u_{k-1}^K|^2 \d x\,.
\end{eqnarray*}
 Using the Cauchy-Schwarz inequality we derive from \eqref{eq:intK}
\beqa
&&  \int_D  |u^K_k\circ \phi^K_k(x) - u^K_{k-1}(x)|^2 \d x 
=  \int_D \left| \int_{t_{k-1}}^{t_k} z(s,\psi^K_{t_{k-1},s}(x)) \d s \right|^2 \d x  \\
&&\leq  \frac1{K} \int_{t_{k-1}}^{t_k} \int_D |z(s,x)|^2 \det D (\psi^K_{t_{k-1},s})^{-1}(x) \d x \d s\\
&&\leq  \frac1{K}  \int_{t_{k-1}}^{t_k} \left(1+ CK^{-\frac{1}{2}}\right)\int_D |z(s,x)|^2 \d x \d s \,,
\eeqa
where we have taken into account the estimate $|1-\det D (\psi^K_{t_{k-1},s})^{-1}(x)| \leq C K^{-\frac12}$, which follows from the uniform bound for $\psi^K$ in 
$C^{0,\frac12}([0,1],C^{1,\alpha}(\bar D))$. 
Furthermore, we obtain via Taylor expansion and the consistency assumption (W3)
\beqa
&&  \int_D W(D\psi^K_{t_{k-1},t_k})+ \gamma |D^m\psi^K_{t_{k-1},t_k}|^2 \d x \\
&&\leq  \int_D  \frac{1}{2K^2}D^{2}W(\Id)(Dv^K_k,Dv^K_k)+ \frac{\gamma}{K^2}  |D^m v^K_k|^2\d x + C \int_D \frac{1}{K^{3}}|Dv^K_k|^3\d x \\
&&= \frac1{K^2}  \int_D L[v^K_k,v^K_k] \d x +\frac{C}{K^3}\int_D |Dv^K_k|^3\d x \,.
\eeqa
The definition of $v^K_k$ together with Jensen's inequality implies
$$\int_D L[v^K_k,v^K_k] \d x \leq K \int_D \int_{t_{k-1}}^{t_k} L[v,v] \d t \d x\,.$$
To estimate the remainder of the Taylor expansion we proceed as follows.
At first, we obtain
$$
\funcnorm{v_k^K}{C^1(\bar D)}^2 \leq C \sum_{l=1}^K \funcnorm{v_l^K}{W^{m,2}(D)}^2 \leq C K \int_0^1 \funcnorm{v(t,\cdot)}{W^{m,2}(D)}^2 \d t \leq C K
$$
using the Sobolev embedding theorem together with the Cauchy-Schwarz inequality and the boundedness of the energy $\continuouspathenergy[u]$.
Hence, $\max_{k=1,\ldots, K} \funcnorm{v^K_k}{C^1(\bar D)} \leq C K^{\frac12}$, which implies
\begin{eqnarray*}
\sum_{k=1}^K \int_D |Dv^K_k|^3\d x &\leq & \max_{k=1,\ldots, K} 
\funcnorm{v_k^K}{C^1(\bar D)} \sum_{k=1}^K \int_D \left( K \int_{t_{k-1}}^{t_k} D v(t,x) \d t \right)^2\d x \\
&\leq& C K^{\frac12} \frac{K^2}{K} \sum_{k=1}^K \int_D \int_{t_{k-1}}^{t_k} |D v(t,x)|^2 \d t \d x \leq C \, K^{\frac32} \,.
\end{eqnarray*} 
From these estimates we finally deduce
\beqa
\continuouspathenergy_K[\imagecurve^K] &\leq&
 \int_0^1 \int_D L[v,v] + \frac1\delta |z|^2 \d x \d t  + C K^{-\frac12}+ \frac{C}{\delta} K^{-\frac12} \,.
\eeqa
{\it (iii) Convergence of the discrete image sequences.}
We are still left to demonstrate that  $\imagecurve^K\rightarrow \imagecurve$ in $L^2((0,1)\times D)$.
To see this, we first observe that by the theorem of Arzel{\`a}-Ascoli  and after selection of  a subsequence 
 $\psi^K$ converges to $\psi$ in $C^{0,\beta}([0,1], C^{1,\alpha}(\bar D))$ with $\beta < \frac12$ and $\alpha < m-\tfrac{d}{2}-1$.
From this and the quantitative control of the inverse of the diffeomorphisms  (\cf \cite[Lemma 9]{TrYo05a}) we deduce that 
$\psi^K_{t,s}$, its inverse, and also $D \psi^K_{t,s}$ converge  uniformly in $x$, $t$, and $s$. Thus, we get that for every $t\in(0,1)$ 
\beqa
\funcnorm{z(\cdot, \psi^K_{t, \cdot}(\cdot)) - z(\cdot, \psi_{t,\cdot}(\cdot))}{L^2((0,1)\times D)} \to 0\,, \quad 
\funcnorm{u(0,\psi^K_{t,0}(\cdot))-u(0,\psi_{t,0}(\cdot))}{L^2(D)} \to 0
\eeqa
for $K\to\infty$. Indeed, in case of the first  claim we argue as follows. 
Due to the uniform bound on $z$ in $L^2((0,1)\times D)$ we only have to show that
$\int_0^1 \int_D  z(s, \psi^K_{t, s}(x))^q \eta(s,x) \d x \d s$ converges to 
$\int_0^1 \int_D  z(s, \psi_{t,s}(x))^q  \eta(s,x) \d x \d s$ for all $\eta \in C^\infty_c((0,1)\times D)$ and $q=1,2$.
This is easily seen via integral transform, \ie
\beqa
&&\int_0^1 \int_D  z(s, \psi^K_{t, s}(x))^q \eta(s,x)  - z(s, \psi_{t,s}(x))^q   \eta(s,x) \d x \d s \\
&& = \int_0^1 \int_D  z(s, y)^q  \Big( \eta(s, (\psi^K_{t,s})^{-1}(y))(\det D\psi^K_{t,s})^{-1} (\psi^K_{t,s})^{-1}(y)  \\ && \qquad \qquad \qquad \qquad -
 \eta(s, (\psi_{t,s})^{-1}(y))(\det D\psi_{t,s})^{-1} (\psi_{t,s})^{-1}(y) 
 \Big)  \d y \d s\,,
\eeqa 
where the right-hand side converges to $0$ for $K \to \infty$. The argument for $u(0,\cdot)$ is analogous.
Hence, we can pass to the limit on the right-hand side of \eqref{eq:intK} and achieve
in analogy to the corresponding argument in the proof of the $\liminf$-estimate 
\beqa
 &&  \left((t,x) \mapsto u(0,\psi^K_{t,0}(x)) + \int_0^t z(s, \psi^K_{t, s}(x) ) \d s \right)\\
&&\rightarrow \left((t,x)\mapsto u(0,\psi_{t,0}(x)) + \int_0^t z(s, \psi_{t, s}(x)) \d s \right) = u\,,
\eeqa
\vspace{-2ex}
where the convergence is in $L^2((0,1)\times D)$. From this the claim follows easily.
\end{proof}

\begin{theorem}[Convergence of discrete geodesic paths]\label{thm:convergence}
Let $u_A,\,u_B \in L^2(D)$ and suppose that (W1-3) holds.
Furthermore, for every $K\in \N$, let $\imagecurve^K$ be a minimizer of $\continuouspathenergy_K$ subject to $u^K(0) = u_A,\, u^K(1) = u_B$.
Then, a subsequence of $(\imagecurve^K)_{K\in \N}$ converges weakly in $L^2((0,1) \times D)$ to a minimizer of the continuous path energy $\continuouspathenergy$ and the associated sequence of discrete energies converges to the minimal continuous path energy.
\end{theorem}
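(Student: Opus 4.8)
The plan is to apply the fundamental theorem of $\Gamma$-convergence: combine compactness of the minimizers with the $\liminf$ and $\limsup$ estimates of Theorem~\ref{thm:gamma}. The one point that does \emph{not} follow by merely quoting Theorem~\ref{thm:gamma} is that the sequence used for the upper bound must honour the end constraints $\imagecurve(0)=u_A$, $\imagecurve(1)=u_B$; this is where I expect the main work to lie. For compactness I would first extract a uniform energy bound from an explicit admissible competitor. Taking the affine-in-time images $u_k=u_A+\tfrac{k}{K}(u_B-u_A)$ together with the trivial deformations $\phi_k=\Id$ (so that $W(\Id)=0$ and $D^m\Id=0$), minimality of $\imagecurve^K$ gives $\continuouspathenergy_K[\imagecurve^K]\le K\sum_{k=1}^K\tfrac1\delta\|u_k-u_{k-1}\|_{L^2(D)}^2=\tfrac1\delta\|u_B-u_A\|_{L^2(D)}^2=:C_0$ for every $K$. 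The a priori estimates from the proof of Theorem~\ref{thm:gamma} (boundedness of $z^K$ and $v^K$ in $L^2$, uniform smoothness of $\psi^K$, and the time-Hölder bound \eqref{eq:TimeBoundImages} together with $\imagecurve^K(0)=u_A$) then bound $\imagecurve^K$ in $L^\infty((0,1),L^2(D))$, so a subsequence satisfies $\imagecurve^K\rightharpoonup\imagecurve^\infty$ weakly in $L^2((0,1)\times D)$.

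For the lower bound and admissibility of the limit, the $\liminf$ estimate yields $\continuouspathenergy[\imagecurve^\infty]\le\liminf_K\continuouspathenergy_K[\imagecurve^K]$. Moreover, step~(iv) of that proof gives $\imagecurve^K(t)\rightharpoonup\imagecurve^\infty(t)$ weakly in $L^2(D)$ for every $t\in[0,1]$; since $\imagecurve^K(0)=u_A$ and $\imagecurve^K(1)=u_B$ independently of $K$, the limit satisfies $\imagecurve^\infty(0)=u_A$ and $\imagecurve^\infty(1)=u_B$ and is therefore admissible for the continuous problem, so in particular $\min\continuouspathenergy\le\continuouspathenergy[\imagecurve^\infty]$.

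The constrained upper bound is the heart of the matter. Let $\imagecurve^\ast$ be a minimizer of $\continuouspathenergy$ with the prescribed end data (it exists by the Trouvé--Younes result recalled in the introduction), with optimal velocity $v$ and weak material derivative $z$ obeying the Lagrangian representation \eqref{eq:Int}. The obstacle is that the recovery sequence produced in the $\limsup$ part of Theorem~\ref{thm:gamma} only achieves $u^K_K\to u_B$ with no rate, and replacing its last node by $u_B$ would cost of order $\tfrac K\delta\|u_B-u^K_K\|_{L^2(D)}^2$, which need not vanish. Instead I would build a genuinely constrained competitor by sampling the geodesic, $\imagevec^K:=(\imagecurve^\ast(t_0),\dots,\imagecurve^\ast(t_K))$, and using the continuous relative flows $\phi_k:=\psi_{t_{k-1},t_k}$ as trial deformations; these lie in $\admset$ because the flow of $v\in L^2((0,1),W^{m,2}(D)\cap W^{1,2}_0(D))$ preserves $W^{m,2}$-regularity and fixes $\partial D$. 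With $\defvec^K$ the actual minimizer of $\ESPhi{K}[\imagevec^K,\cdot]$, the path $\tilde\imagecurve^K:=\interpolimage_K[\imagevec^K,\defvec^K]$ then satisfies $\tilde\imagecurve^K(0)=u_A$, $\tilde\imagecurve^K(1)=u_B$ and $\continuouspathenergy_K[\tilde\imagecurve^K]=\pathenergy_K[\imagevec^K]\le\ESPhi{K}[\imagevec^K,(\phi_1,\dots,\phi_K)]$. I would estimate the right-hand side exactly as in the $\limsup$ proof: with $v_k:=K(\phi_k-\Id)$, a Taylor expansion of $W$ at $\Id$ via (W3) turns the deformation terms into $\tfrac1K\sum_k\int_D L[v_k,v_k]+O(K^{-1/2})$, and Jensen's inequality bounds $\tfrac1K\sum_k\int_D L[v_k,v_k]$ by $\int_0^1\!\int_D L[v,v]+o(1)$; the identity $\imagecurve^\ast(t_k)\circ\phi_k-\imagecurve^\ast(t_{k-1})=\int_{t_{k-1}}^{t_k}z(s,\psi_{t_{k-1},s})\,\d s$ together with Cauchy--Schwarz controls the mismatch terms by $\tfrac1\delta\int_0^1\!\int_D|z|^2+o(1)$. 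Hence $\limsup_K\continuouspathenergy_K[\tilde\imagecurve^K]\le\continuouspathenergy[\imagecurve^\ast]=\min\continuouspathenergy$.

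Chaining all of this, and using minimality of $\imagecurve^K$ together with $\tilde\imagecurve^K$ being admissible, gives $\min\continuouspathenergy\le\continuouspathenergy[\imagecurve^\infty]\le\liminf_K\continuouspathenergy_K[\imagecurve^K]\le\limsup_K\continuouspathenergy_K[\imagecurve^K]\le\limsup_K\continuouspathenergy_K[\tilde\imagecurve^K]\le\continuouspathenergy[\imagecurve^\ast]=\min\continuouspathenergy$. Every term therefore coincides, so $\imagecurve^\infty$ is a minimizer of $\continuouspathenergy$ and $\continuouspathenergy_K[\imagecurve^K]\to\min\continuouspathenergy$. To summarise, the only genuinely delicate point beyond a routine application of Theorem~\ref{thm:gamma} is the endpoint matching of the recovery sequence, which I resolve by exact nodal sampling of the continuous geodesic rather than by the unconstrained construction of Theorem~\ref{thm:gamma}.
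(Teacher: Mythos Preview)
Your overall strategy---uniform energy bound via the affine-in-time competitor with $\phi_k=\Id$, compactness of $(\imagecurve^K)$ in $L^2((0,1)\times D)$, and then the $\liminf$/$\limsup$ sandwich---is exactly what the paper does. The paper's own proof is in fact terser than yours: it declares the argument ``standard in $\Gamma$-convergence theory,'' invokes the recovery sequence of Theorem~\ref{thm:gamma}, and writes the chain $\continuouspathenergy[\imagecurve]\le\liminf\continuouspathenergy_K[\imagecurve^K]\le\limsup\continuouspathenergy_K[\tilde\imagecurve^K]\le\continuouspathenergy[\tilde\imagecurve]$ \emph{without} checking that the recovery sequence obeys $\tilde\imagecurve^K(1)=u_B$. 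So the endpoint issue you isolate as ``the heart of the matter'' is one the paper simply glosses over; you are being more scrupulous than the authors, and your use of step~(iv) of the $\liminf$ proof to verify admissibility of the weak limit is a genuine addition.

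That said, your proposed fix carries its own gap. You set $\phi_k:=\psi_{t_{k-1},t_k}$ and assert $\phi_k\in\admset$, i.e.\ $\phi_k\in W^{m,2}(D)$; the paper only establishes $\psi\in C^{0,\frac12}([0,1],C^{1,\alpha}(\bar D))$, and the $W^{m,2}$-regularity of the flow is an Ebin--Marsden type statement you would have to import separately. More seriously, your Jensen step is \emph{not} ``exactly as in the $\limsup$ proof.'' There the discrete velocity is the linear Bochner average $v^K_k=K\int_{t_{k-1}}^{t_k}v(s,\cdot)\,\d s$, so convexity of $w\mapsto\int_D L[w,w]\,\d x$ on $W^{m,2}$ yields the bound directly. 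Your $v_k=K(\psi_{t_{k-1},t_k}-\Id)=K\int_{t_{k-1}}^{t_k}v(s,\psi_{t_{k-1},s}(\cdot))\,\d s$ is an average of \emph{compositions}, and since $L$ involves $D^m$, the chain rule drags in spatial derivatives of $\psi_{t_{k-1},s}$ up to order $m$; bounding $\tfrac1K\sum_k\|D^m v_k\|_{L^2}^2$ by $\int_0^1\|D^m v\|_{L^2}^2\,\d t+o(1)$ then requires uniform $W^{m,2}$ control of the flow together with a Fa\`a di Bruno remainder estimate, neither of which is available from the paper. Your treatment of the mismatch term, by contrast, is correct: the identity $\imagecurve^\ast(t_k)\circ\phi_k-\imagecurve^\ast(t_{k-1})=\int_{t_{k-1}}^{t_k}z(s,\psi_{t_{k-1},s}(\cdot))\,\d s$ holds and Cauchy--Schwarz plus a Jacobian bound closes it.
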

\begin{proof}
The proof is standard 
in $\Gamma$-convergence theory (cf. \cite{Br02}). 
Choosing $u^K_k = \tfrac{k}{K} u_B+  (1-\tfrac{k}{K}) u_A$ and $\phi^K_k = \Id$ we obtain an a priori bound for the discrete energy 
$\continuouspathenergy_K$, which implies an a priori bound for $z^K$ in $L^2((0,1)\times D)$. Using \eqref{eq:discreteInt}, the strong convergence of $\psi^K$, and the Cauchy-Schwarz inequality 
we get that for the $u^K_k$, which are associated with the minimizer of $\continuouspathenergy_K$, the estimate
$\|u^K_k\|^2_{L^2(D)} \leq C ( \|u_A\|_{L^2(D)}^2 + \tfrac{k}{K} \|z^K\|_{L^2((0,1)\times D)}^2)$ holds.
From this we deduce that $\imagecurve^K$ is uniformly bounded in $L^\infty((0,1),L^2(D))$.
Hence, there exists a  subsequence, again denoted by $(\imagecurve^K)_{K\in \N}$, with $\imagecurve^K \rightharpoonup \imagecurve$ (weakly) in $L^2((0,1)\times D)$ to some $\imagecurve\in L^2((0,1)\times D)$.
Now, let us assume that there is an image path $\tilde \imagecurve$ with 
$\continuouspathenergy[\tilde \imagecurve] < \continuouspathenergy[u]$.
Then, by the $\limsup$-estimate of Theorem \ref{thm:gamma} there exists a  sequence
$(\tilde \imagecurve^K)_{K\in \N}$ with $\tilde \imagecurve^K \in L^2((0,1)\times D)$ such that $\limsup_{K\to \infty} \continuouspathenergy_K[\tilde \imagecurve^K] \leq 
\continuouspathenergy[\tilde \imagecurve]$ and together with the  $\liminf$-estimate we obtain
\beq
\continuouspathenergy[\imagecurve] \leq \liminf_{K\to \infty} \continuouspathenergy_K[ \imagecurve^K] \leq \limsup_{K\to \infty} \continuouspathenergy_K[\tilde\imagecurve^K] \leq 
\continuouspathenergy[\tilde \imagecurve]\,,
\eeq
which is a contradiction. Hence, $u$ minimizes the continuous path energy
over all admissible image paths. 
\end{proof}
\begin{remark}[Inherited smoothness]
Continuous solutions $\imagecurve$  of the metamorphosis model inherit for all $t\in (0,1)$ the regularity of the 
input images $u_A$ and $u_B$ (up to the H\"older regularity for the exponent $\alpha$). This can be seen as follows.
For a minimizer of the continuous path energy on $L^2((0,1) \times D)$ with $u(0)  \in \imagespace$ and $u(1) \in \imagespace$,
Trouv\'e and Younes give in \cite[Theorem 4]{TrYo05a} and \cite[Theorem 2]{TrYo05a} a direct representation of the intensity function, namely 
$$u(t,\cdot) = u(0, \psi(t)^{-1}(\cdot)) + \left(z_0 \int_0^t (\det D \psi(s))^{-1} \d s \right)  \circ \psi(t)^{-1}$$ 
for some $z_0 \in L^2(D)$ and $\psi(t) = \psi(t,\cdot)$ the underlying flow of diffeomorphisms.
Now, evaluating this equation for $t=1$ gives 
$$z_0 =(u(1,\psi(1,\cdot)) - u(0)) \left(\int_0^1 (\det D \psi)^{-1}(s) \d s \right)^{-1}\,.$$
Hence, $z_0$ is as regular as $u_A$ and $u_B$ (up to the H\"older regularity for the exponent $\alpha$) 
and the same holds true for $u(t,\cdot)$ for all $t\in [0,1]$.
For discrete solutions $\imagecurve^K$, the analog statement is already given in Remark \ref{remarklinear}.
\end{remark}

\section{Spatial discretization} 
We consider a regular quadrilateral grid on a two-dimen\-sional, rectangular  image domain $D$ consisting of cells $\mathcal{C}_m$ with $m\in I_C$, where $I_C$ is the index set of all cells. Based on this grid, we define the finite element space $\V_h$ of piecewise bilinear continuous functions (cf. \cite{Br07}) and denote by
$\set{\basisfct^i}_{i\in I_N}$ the set of basis functions, where $I_N$ is the index set of all grid nodes $x_i$.
Now, we investigate spatially discrete deformations $\Phi_k: D\to D$ with $\Phi_k\in \V_h^2$ ($k=1,\ldots, K$) and spatially discrete image maps $\Image_k:D \to \R$ ($k=0,\ldots, K$) with  $\Image_k\in \V_h$ and $\Image_0 = \Image_A = \Ih \image_A,\quad \Image_K = \Image_B = \Ih \image_B$. Here, $\Ih$ denotes the nodal interpolation operator. Given a finite element function $W \in \V_h$,
we denote by $\bar W = (W(x_i))_{i\in I_N}$ the corresponding vector of nodal values.
Furthermore, we define a fully discrete counterpart $\pathenergy_{K,h}$ of the so far solely time discrete path energy $\pathenergy_{K}$
as follows
\begin{align*}
& \pathenergy_{K,h}[(\Image_0,\ldots, \Image_K)]  :=  \min_{\substack{\Phi_k \in \V_h^2, \, \Phi_k|_{\partial D}= \Id,\\ k=1,\ldots, K}}
\ESPhi{K,h}[(\Image_0,\ldots, \Image_K),(\Phi_1,\ldots, \Phi_K)]\,.
\end{align*}
Here, $\ESPhi{K,h}[(\Image_0,\ldots, \Image_K),(\Phi_1,\ldots, \Phi_K)]$ is the discrete counterpart of $\ESPhi{K}$ and obtained by approximating the integrals of $\ESPhi{K}$ on each cell with the Simpson quadrature rule. Here, the standard 3-point Simpson quadrature rule in 1D is extended to 2D with 9 points using the tensor product. In our numerical experiments, this $9$--point quadrature rule performed well. 
In particular, compared to lower order quadrature rules, it avoids blurring effects in the vicinity of image edges. Let us remark that due to the concatenation with the deformation 
an exact integration with standard quadrature rules is not possible.

Next, we study the numerical minimization of the fully discrete energy $\ESPhi{K,h}$ for fixed $(\Phi_1,\ldots, \Phi_K)$.
For $m\in I_C$  the Simpson quadrature takes into account nine quadrature points. Let $x_q^m$ denote the $q$-th quadrature point in $\mathcal{C}_m$ and $w_q^m$ the corresponding quadrature weight for $q\in \set{0,\ldots, 8}$.
Then, the entries of the weighted mass matrix $\mass_h[\Phi,\Psi]= \left(\mass_h[\Phi,\Psi]_{i,j}\right)_{i,j\in I_N}$
with basis functions being transformed via deformations $\Phi, \Psi$ and evaluated via quadrature are given by
\begin{align*}
\mass_h[\Phi,\Psi]_{i,j} :={}& \sum_{l\in I_C}\sum_{q=0}^8 w_q^l(\basisfct^i\circ \Phi)(x_q^l)\,(\basisfct^j\circ \Psi)(x_q^l)\,.
\end{align*}
To evaluate the entries of this matrix numerically, we use cell-wise assembly. 
For $m\in I_C$, let $\basisfct_\alpha^m$ denote the basis function in the cell $\mathcal{C}_m$ with local index $\alpha\in \set{0,1,2,3}$ and $I(m,\alpha)$ the global
index corresponding to the local index $\alpha$ in the cell $\mathcal{C}_m$, \ie $\basisfct_{I(m,\alpha)} = \basisfct_\alpha^m$ on $\mathcal{C}_m$.
The cell-wise assemble procedure works as follows. First, $\mass_h[\Phi,\Psi]$ is initialized as the zero matrix. Then, for the every $l\in I_C$ and every $q\in \set{0,\ldots, 8}$ one
identifies the cells $\mathcal{C}_m$, $\mathcal{C}_{m'}$ with
$\Phi(x^l_q) \in \mathcal{C}_m$ and $\Psi(x^l_q) \in \mathcal{C}_{m'}$, respectively.
Finally, for all pairs of local indices $(\beta,\beta')$ with $\beta, \beta' \in \set{0,1,2,3}$ one adds $w_q^l \basisfct^m_\beta(\Phi(x^l_q)) \basisfct^{m'}_{\beta'}(\Psi(x^l_q))$
to $\mass_h[\Phi,\Psi]_{I(m,\beta),I(m',\beta')}$. 

Now, we are in the position to derive a linear system of equations for the vector $\bar \U = (\bar U_1,\ldots, \bar U_{K-1})$ of images
that describes a minimizer of $\ESPhi{K,h}$ for a fixed vector of spatially discrete deformations $\bar{\mathbf{\Phi}} =  (\bar \Phi_1,\ldots, \bar \Phi_{K})$.
Indeed, we can rewrite the last term in the energy $\ESPhi{K,h}$ as follows
\begin{align*}
&\sum_{k=1}^K\sum_{l\in I_C}\sum_{q=0}^8 w_q^l \left(|\Image_{k} \circ \Phi_k-\Image_{k-1}|^2\right)(x_q^l)\\
&=\sum_{k=1}^K \left(\mass_h[\Phi_k,\Phi_k] \bar \Image_k\cdot \bar \Image_k - 2\mass_h[\Phi_k,\Id] \bar \Image_k\cdot \bar \Image_{k-1}
+ \mass_h[\Id,\Id] \bar \Image_{k-1}\cdot \bar \Image_{k-1} \right).
\end{align*}
From this, we obtain for the variation of the energy  $ \ESPhi{K,h}$ with respect to the $k$-th image map
$$
\partial_{\bar \Image_k}  \ESPhi{K,h} = 2 \left( \mass_h[\Phi_k,\Phi_k] + \mass_h[\Id,\Id] \right) \bar \Image_k - 2 \mass_h[\Phi_k,\Id]^T \bar \Image_{k-1}
- 2 \mass_h[\Phi_{k+1},\Id] \bar \Image_{k+1}
$$
for $k=1,\ldots, K-1$. In the semi-Lagrangian approach for the flow of diffeomorphisms model, a similar computation appears in the context of the single matching penalty with respect to the given end image (cf. \cite{BeMiTr05}). For a fixed set of deformations a necessary condition for $\bar \U$ to be a minimizer of $\ESPhi{K,h}$ is that $\bar \U$ solves 
the block tridiagonal system of linear equations $\MatrixBF[\mathbf{\Phi}] \bar  \U = \Right[\mathbf{\Phi}]$, where
$\MatrixBF[\mathbf{\Phi}]$ is formed by $(K-1)\times(K-1)$ matrix blocks $\MatrixBF_{k,k'} \in  \R^{I_N\times I_N}$ and $\Right[\mathbf{\Phi}]$ consists of $K-1$ vector blocks $\Right_k \in \R^{I_N}$
with
\begin{align*}
& \MatrixBF_{k,k-1} =  -\mass_h[\Phi_{k},\Id]^T\,,\;
\MatrixBF_{k,k} =  \mass_h[\Phi_{k},\Phi_{k}]+\mass_h[\Id,\Id]\,,\;
\MatrixBF_{k,k+1} = -\mass_h[\Phi_{k+1},\Id] \,,\\
& \Right_1 =\mass_h[\Phi_{1},\Id]^T\bar \Image_A \,,\;\Right_2 = \Right_3 = \ldots = \Right_{K-2} = 0\,,\;
\Right_{K-1} =  \mass_h[\Phi_{K},\Id]\bar \Image_B \,.
\end{align*}
The energy $\sum_{l\in I_C}\sum_{q=0}^8 w_q^l \left(|\Image_{k} \circ \Phi_k-\Image_{k-1}|^2\right)(x_q^l)$
is convex in $\Image_{k}$ (as a quadratic function of convex combinations of components of $\Image_{k}$) and strictly convex in $\Image_{k-1}$. 
Here, we use that the quadrature rule integrates affine functions exactly.
Hence, $\ESPhi{K,h}$ is strictly convex in $\U$ and
there is a unique minimizer $\U = \U[\mathbf{\Phi}]$ for fixed $\mathbf{\Phi}$. This implies that $\MatrixBF$ is invertible and by solving the
linear system $\MatrixBF[\mathbf{\Phi}] \bar  \U = \Right[\mathbf{\Phi}]$ one computes this unique minimizer.
Numerically, the corresponding system of linear equations (cf. line \ref{algo:LinearSystem} of Algorithm~\ref{alternatingAlgorithm}) is solved with a conjugate gradient method with diagonal preconditioning.

For fixed $\U$, the deformations $\Phi_1,\ldots,\Phi_K$ are independent of each other and thus can be updated separately.
In the case of bilinear finite elements, we consider only even $m$ and replace the integrand $|D^m v|^2$ by $|\Delta^{\frac{m}{2}} v|^2$ in the quadratic form \eqref{definitionEllipticOperator} and correspondingly $|D^m \phi|^2$ by  
$|\Delta^{\frac{m}{2}} \phi|^2$ in the energy \eqref{eq:WEnerDefinition}. 
By elliptic regularity theory, all of the results above directly transfer to this modified functional.
Furthermore, we use the same quadrature rule as before for the elastic energy and obtain the fully discrete energy 
\begin{eqnarray*}
\ESPhi{K,h}[(\Image_0,\ldots, \Image_K),(\Phi_1,\ldots, \Phi_K)]  &=&
 \sum_{k=1}^K  \Big( \sum_{l\in I_C}\sum_{q=0}^8 w_q^l W(D\Phi_k(x^l_q))  \\
&& \qquad \; + \gamma \, \sum_{n=1,2} \mass_h (\mass_h^{-1} \stiff_h)^{\frac{m}{2}} \bar \Phi^n_k \cdot (\mass_h^{-1} \stiff_h)^{\frac{m}{2}} \bar \Phi^n_k  \\
&& \qquad \;  + \frac1\delta \sum_{l\in I_C}\sum_{q=0}^8 w_q^l \left(|\Image_{k} \circ \Phi_k-\Image_{k-1}|^2\right)(x_q^l) \Big)\,,
\end{eqnarray*}
where
$\stiff_h[\Phi,\Psi]_{i,j} := \sum_{l\in I_C} \sum_{q=0}^8 w_q^l \nabla\Theta^i(x^l_q)\cdot\nabla\Theta^j(x^l_q)$  is the stiffness matrix and $\Phi^n_k$ the $n$-th component of $\Phi_k$.
The actual minimization of $\ESPhi{K,h}$ with respect to $\Phi_k$ (the numerical solution of a simple registration problem) is implemented based on a step size controlled 
Fletcher-Reeves nonlinear conjugate gradient descent scheme with respect to a regularized $H^1$-metric on the space of deformations \cite{SuYeMe07}. 
Thereby, the gradient of the energy $\ESPhi{K,h}$ with respect to the deformation $\Phi_k$ in a direction $\Theta$ is given by
\begin{eqnarray*}
&&<\partial_{\Phi_k}\ESPhi{K,h}[(\Image_0,\ldots, \Image_K),(\Phi_1,\ldots, \Phi_K)],\Theta>  = \\
&&   \sum_{k=1}^K  \Big( \!\sum_{l\in I_C}\sum_{q=0}^8 w_q^l W_{,A}(D\Phi_k(x^l_q))(D \Theta(x^l_q))
 + 2 \gamma \!  \! \sum_{n=1,2}  \! \mass_h (\mass_h^{-1} \stiff_h)^{\frac{m}{2}} \bar \Phi^n_k \cdot (\mass_h^{-1} \stiff_h)^{\frac{m}{2}} \bar \Theta^n  \\
&&  \qquad \; \; + \frac2\delta  \sum_{l\in I_C}\sum_{q=0}^8 w_q^l \left(\Image_{k} \circ \Phi_k-\Image_{k-1}\right)(x_q^l) \left((\nabla \Image_{k}\circ \Phi_k) \cdot \Theta\right)(x_q^l)\Big) \,.
\end{eqnarray*}
Furthermore, we take into account a cascadic approach starting with a coarse time discretization and then successively refine the time discretization. In each step of this approach, we
minimize the discrete path energy and perform a prolongation to the next finer level of the time discretization. The prolongation is based on the insertion of new midpoint images between every pair of consecutive images. To this end, we compute an optimal deformation between a pair of images and insert the middle image of the resulting warp.
To improve the robustness of the algorithm, we additionally use a Gaussian filter with variance $\sigma^2 = \frac54 h$ (color images) or $\sigma^2 = \frac58 h$ (black-and-white images) to pre-filter the input images and damp noise, where $h$ is the mesh size.
The resulting alternating minimization algorithm is summarized in Algorithm \ref{alternatingAlgorithm}.
\begin{algorithm}[t]
\small 
\KwData{input images $\Image_A$ and $\Image_B$}
\KwResult{approximate minimizer $(\Image_A=\Image_0^J,\Image_1^J,\ldots,\Image_K^J=\Image_B)$ of $\pathenergy_{K}$}
smooth $\Image_0^0=\Image_A$ and $\Image_1^0=\Image_B$ with the Gaussian filter with variance $\sigma^2$\;
\For{$j=1$ \KwTo $J$}{
$K=2^{j}$\;
$\Image_{2k}^{j}=\Image_{k}^{j-1}$ for $k=0,1,\ldots,K/2$\;
\For{$k=0$ \KwTo $K/2-1$}{
calculate  $\Phi\in\argmin_{\tilde\Phi\in\V_h^2}\ESPhi{K}[(\Image_{2k}^{j},\Image_{2k+2}^{j}),\tilde\Phi]$\;
$\Image_{2k+1}^{j}=\Image_{2k+2}^{j}\circ(\Id+0.5(\Phi-\Id))$\;
}
\Repeat{$\funcnorm{\bar\U^{j,old}-\bar\U^{j}}{2}\leq {threshold}$}{
$\bar\U^{j,old}=(\Image_1^{j},\ldots, \Image_{K-1}^{j})$\;
compute $\mathbf{\Phi}^{j}=(\Phi_1^{j},\ldots, \Phi_{K}^{j})\in\argmin_{\Phi\in (\V_h^2)^{K}}
\ESPhi{K}[(\Image_A,\bar\U^j,\Image_B),\Phi]$\;
calculate $\bar\U^{j}=(\Image_1^{j},\ldots, \Image_{K-1}^{j})$ via
$\bar  \U^{j} = \Matrix[\mathbf{\Phi}^{j}]^{-1} \Right[\mathbf{\Phi}^{j}]$ \;
\label{algo:LinearSystem}
}
}
\vspace{1ex}

\caption{The alternating gradient descent scheme to compute the geodesic path.}
\label{alternatingAlgorithm}
\end{algorithm}
In the applications, it is frequently appropriate to ensure that deformations are not restricted too much by the
Dirichlet boundary condition $\Phi=\Id$ on $\partial D$. This can practically be obtained by enlarging the computational domain and considering an extension of the image intensities with a constant gray or color value or by taking into account
natural boundary conditions for the deformations. This can theoretically be justified by adding constraints on the
mean deformation and the angular momentum. In our computations, such constraints are usually not required to avoid an unbounded rigid body motion component of the numerical solution.

\section{Numerical Results}
\setlength{\unitlength}{0.05\textwidth}
\begin{figure}[t]
\resizebox{\linewidth}{!}{
\begin{picture}(21,18)
\put(0,18){\footnotesize{$K=4$,}}
\put(0,17.4){\footnotesize{using \eqref{eq:WEnerDefinition}, \eqref{eq:energyDensityExOne}}}
\put(0,16.8){\footnotesize{with $\delta=10^{-2}$,}}
\put(0,16.2){\footnotesize{$\lambda=1$, $\mu=\frac12$,}}
\put(0,15.6){\footnotesize{ $q=r=\frac32$,}}
\put(0,15){\footnotesize{ $s=\frac12$, $\gamma=10^{-5}$}}
\put(3.5,15){\includegraphics[width=0.165\textwidth]{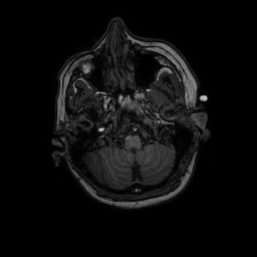}}
\put(7,15){\includegraphics[width=0.165\textwidth]{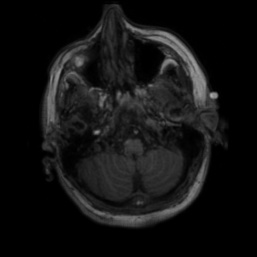}}
\put(10.5,15){\includegraphics[width=0.165\textwidth]{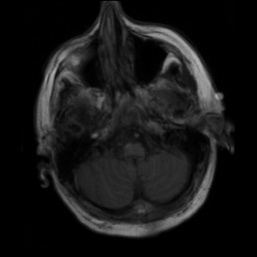}}
\put(14,15){\includegraphics[width=0.165\textwidth]{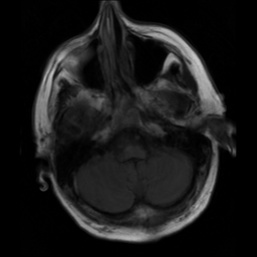}}
\put(17.5,15){{\includegraphics[width=0.165\textwidth]{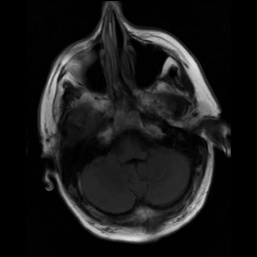}}}
\put(0,14.65){\line(1,0){21}}
\put(0,13.6){\footnotesize{$K=4$,}}
\put(0,13){\footnotesize{using \eqref{eq:WEnerDefinitionsimple}} with}
\put(0,12.4){\footnotesize{$\gamma=10^{-3}$,}}
\put(0,11.8){\footnotesize{$\delta=10^{-1}$}}
\put(3.5,11){\includegraphics[width=0.165\textwidth]{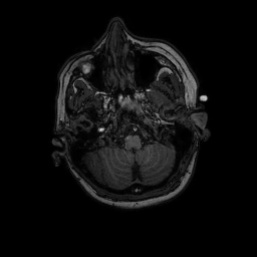}}
\put(7,11){\includegraphics[width=0.165\textwidth]{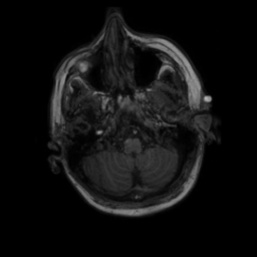}}
\put(10.5,11){\includegraphics[width=0.165\textwidth]{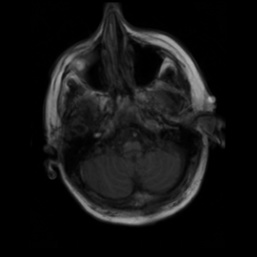}}
\put(14,11){\includegraphics[width=0.165\textwidth]{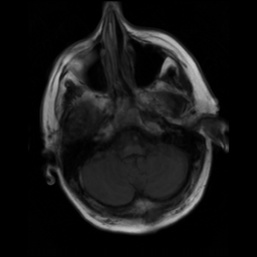}}
\put(17.5,11){{\includegraphics[width=0.165\textwidth]{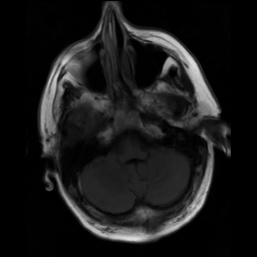}}}
\put(0,10.65){\line(1,0){21}}
\put(0,9.6){\footnotesize{$K=16$,}}
\put(0,9){\footnotesize{using \eqref{eq:WEnerDefinitionsimple}} with}
\put(0,8.4){\footnotesize{$\gamma=10^{-3}$,}}
\put(0,7.8){\footnotesize{$\delta=10^{-1}$}}
\put(3.5,7){\includegraphics[width=0.165\textwidth]{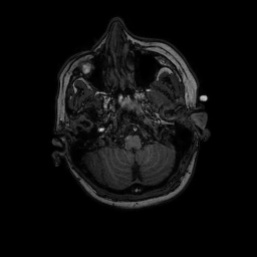}}
\put(7,7){\includegraphics[width=0.165\textwidth]{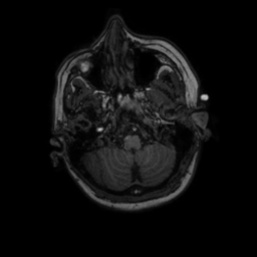}}
\put(10.5,7){\includegraphics[width=0.165\textwidth]{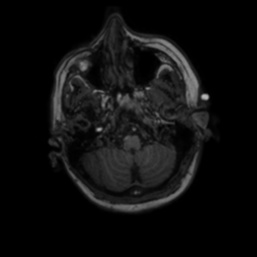}}
\put(14,7){\includegraphics[width=0.165\textwidth]{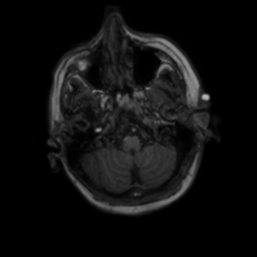}}
\put(17.5,7){\includegraphics[width=0.165\textwidth]{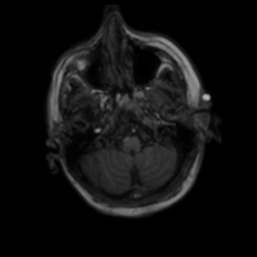}}
\put(0,3.5){\includegraphics[width=0.165\textwidth]{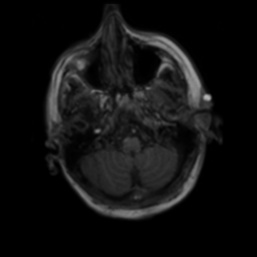}}
\put(3.5,3.5){\includegraphics[width=0.165\textwidth]{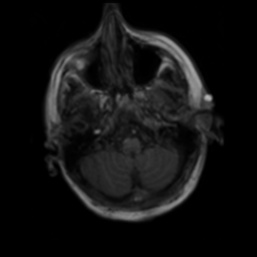}}
\put(7,3.5){\includegraphics[width=0.165\textwidth]{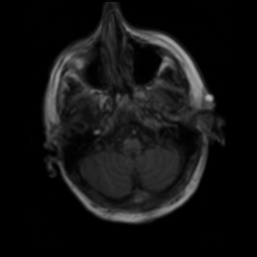}}
\put(10.5,3.5){\includegraphics[width=0.165\textwidth]{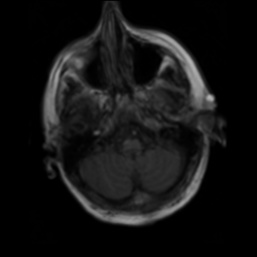}}
\put(14,3.5){\includegraphics[width=0.165\textwidth]{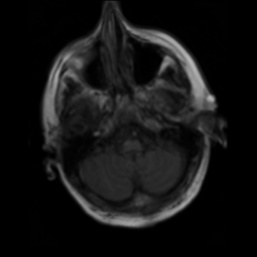}}
\put(17.5,3.5){\includegraphics[width=0.165\textwidth]{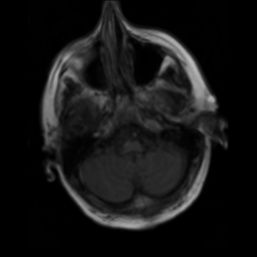}}
\put(0,0){\includegraphics[width=0.165\textwidth]{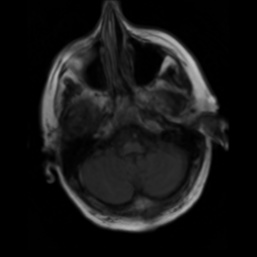}}
\put(3.5,0){\includegraphics[width=0.165\textwidth]{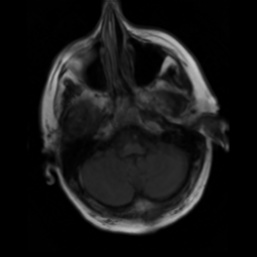}}
\put(7,0){\includegraphics[width=0.165\textwidth]{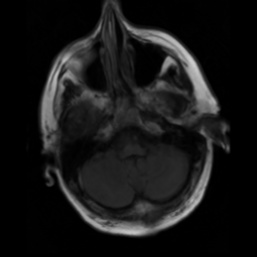}}
\put(10.5,0){\includegraphics[width=0.165\textwidth]{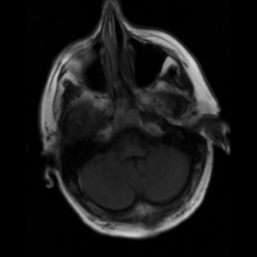}}
\put(14,0){\includegraphics[width=0.165\textwidth]{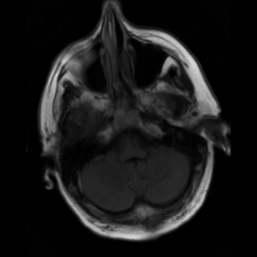}}
\put(17.5,0){\includegraphics[width=0.165\textwidth]{images/MRT-1/U_04_015}}
\end{picture}
}
\caption{Metamorphosis for two slices of a MRT data set  of a human brain (data courtesy of H. Urbach, Neuroradiology, University Hospital Bonn). We compare the original model (first row) with the simplified model and $K=4$ (second row), 
$K=16$ (third to fifth row). }
\label{MRT-1}
\end{figure}

In this section, we discuss numerical results for the metamorphosis model, which are obtained with Algorithm \ref{alternatingAlgorithm} proposed in the preceding section. Besides the original model \eqref{eq:WEnerDefinition}, 
we consider a simplified model, which gives result of comparable visual quality with less computational effort.
In the original model, we use as discussed above for $m=4$ instead of $|D^4 v|^2$ the term $|\Delta^2 v|^2$ in the quadratic form \eqref{definitionEllipticOperator}.
The simplified model is associated with the quadratic form 
\beqn
L[v(t),v(t)] :=   D v : Dv + \gamma  \Delta v \cdot \Delta v\,,
\label{definitionEllipticOperatorsimple}
\eeqn
where $\gamma >0$.
A choice for the discrete energy, which is consistent with this quadratic form, is given by 
\beqn
\WEner[\image,\tilde \image]= \min_{\phi} \int_{D} D \phi: D \phi +  \gamma \Delta \phi \cdot \Delta \phi
+\frac{1}{\delta} |\tilde \image \circ \phi-\image|^2 \d x\,.\label{eq:WEnerDefinitionsimple}
\eeqn 
In fact, this retrieves a very basic model for the registration of the two images
$\image$ and $\tilde \image$ consisting of a simple thin plate spline regularization and the most basic fidelity term (cf. \cite{MoFi03}).

Let us emphasize that in the spatially continuous setting both the existence theory and the $\Gamma$-convergence result require the full set of assumptions. In particular, the definitions \eqref{definitionEllipticOperatorsimple} for $L[\cdot,\cdot]$
and \eqref{eq:WEnerDefinitionsimple} for $\WEner$ (contrary to the full model with $W$ proposed in \eqref{eq:energyDensityExOne}) do not comply with (W2) and (W3). 
However, in case of the definition \eqref{eq:WEnerDefinitionsimple}, the regularization term of the deformation energy 
$\WEner$ is quadratic and enables a significant speedup of the algorithm compared to the theoretically justified fully nonlinear model. We compare both models in our first example and use the simplified model in all other applications.
The parameter $threshold$ is set to $10^{-6}$ in the algorithm.

Figure \ref{MRT-1} depicts a discrete geodesic path obtained with the full model (with parameters $K=4$, $\delta=10^{-2}$, $\lambda=1$, $\mu=\frac12$, $q=r=\frac32$, $s=\frac12$ and $\gamma=10^{-5}$) 
and with the simplified model (with parameters $K\in\{4,16\}$, $\gamma=10^{-3}$, $\delta=10^{-1}$), where $\image_A$ and $\image_B$ are different slices of a 3D magnetic resonance tomography of a human brain. 

Figure \ref{figure:womenExample} shows a geodesic path between two faces from female portrait paintings\footnote{first painting by A. Kauffmann (public domain, see \url{http://commons.wikimedia.org/wiki/File:Angelika_Kauffmann_-_Self_Portrait_-_1784.jpg}), second painting by R. Peale (GFDL, see \url{http://en.wikipedia.org/wiki/File:Mary_Denison.jpg})}
computed with the simplified model with parameters $\gamma=10^{-3}$ and $\delta=10^{-2}$. 
 The local contributions $\ESPhi{K}[(\Image_{k-1},\Image_{k}),\Phi_k]$  for $k=1,\ldots, K$ 
 of the total energy and its components are shown in Figure \ref{figure:womenExamplePlot}. 
 Note that the method seems to prefer an approximate equidistribution of the total path energy in time.
 
\setlength{\unitlength}{0.05\textwidth}
\begin{figure}[t]
\resizebox{\linewidth}{!}{
\begin{picture}(21,14)
\put(0,13.6){\footnotesize{$K=4$,}}
\put(0,13){\footnotesize{using \eqref{eq:WEnerDefinitionsimple}} with}
\put(0,12.4){\footnotesize{$\gamma=10^{-3}$,}}
\put(0,11.8){\footnotesize{$\delta=10^{-2}$}}
\put(3.5,11){\includegraphics[width=0.165\textwidth]{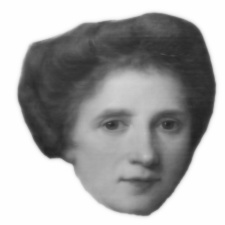}}
\put(7,11){\includegraphics[width=0.165\textwidth]{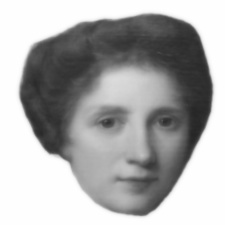}}
\put(10.5,11){\includegraphics[width=0.165\textwidth]{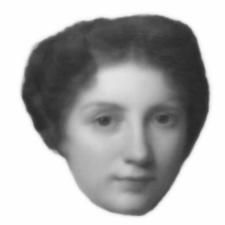}}
\put(14,11){\includegraphics[width=0.165\textwidth]{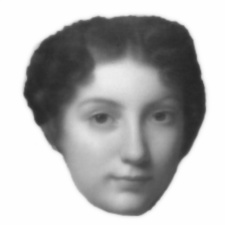}}
\put(17.5,11){{\includegraphics[width=0.165\textwidth]{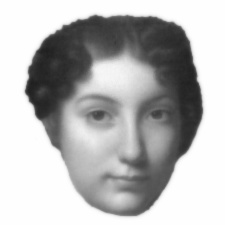}}}
\put(0,10.65){\line(1,0){21}}
\put(0,9.6){\footnotesize{$K=16$,}}
\put(0,9){\footnotesize{using \eqref{eq:WEnerDefinitionsimple}} with}
\put(0,8.4){\footnotesize{$\gamma=10^{-3}$,}}
\put(0,7.8){\footnotesize{$\delta=10^{-2}$}}
\put(3.5,7){\includegraphics[width=0.165\textwidth]{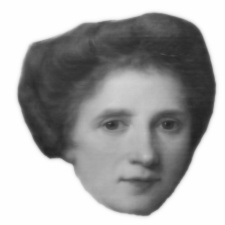}}
\put(7,7){\includegraphics[width=0.165\textwidth]{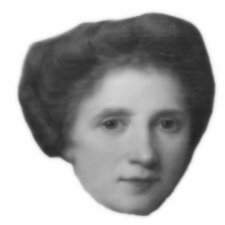}}
\put(10.5,7){\includegraphics[width=0.165\textwidth]{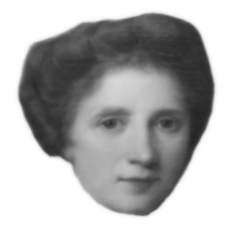}}
\put(14,7){\includegraphics[width=0.165\textwidth]{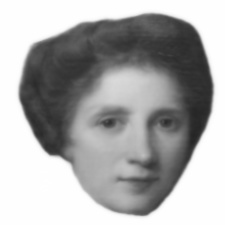}}
\put(17.5,7){\includegraphics[width=0.165\textwidth]{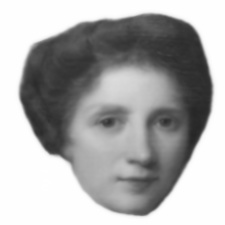}}
\put(0,3.5){\includegraphics[width=0.165\textwidth]{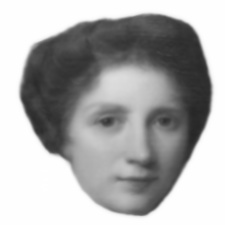}}
\put(3.5,3.5){\includegraphics[width=0.165\textwidth]{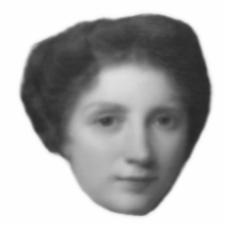}}
\put(7,3.5){\includegraphics[width=0.165\textwidth]{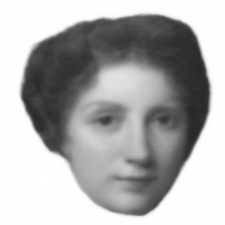}}
\put(10.5,3.5){\includegraphics[width=0.165\textwidth]{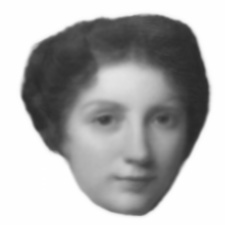}}
\put(14,3.5){\includegraphics[width=0.165\textwidth]{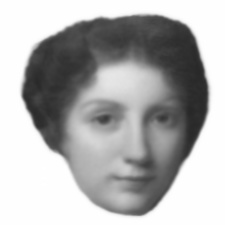}}
\put(17.5,3.5){\includegraphics[width=0.165\textwidth]{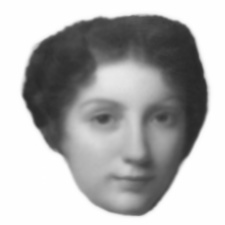}}
\put(0,0){\includegraphics[width=0.165\textwidth]{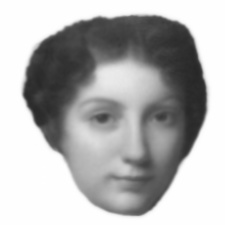}}
\put(3.5,0){\includegraphics[width=0.165\textwidth]{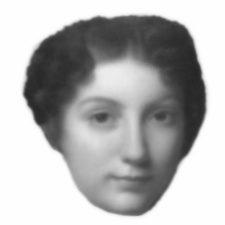}}
\put(7,0){\includegraphics[width=0.165\textwidth]{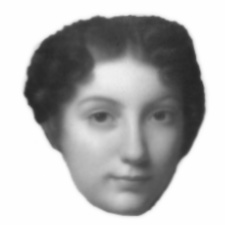}}
\put(10.5,0){\includegraphics[width=0.165\textwidth]{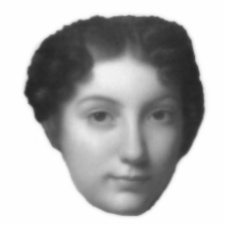}}
\put(14,0){\includegraphics[width=0.165\textwidth]{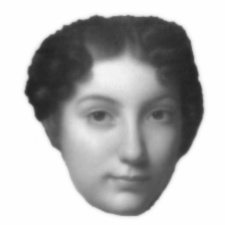}}
\put(17.5,0){\includegraphics[width=0.165\textwidth]{images/K/U_04_015_cropped}}
\end{picture}
}
\caption{Metamorphosis between two faces from female portrait paintings.}
\label{figure:womenExample}
\end{figure}

\begin{figure}[tb]
\begin{center}
\subfigure{\includegraphics[width=0.35\textwidth]{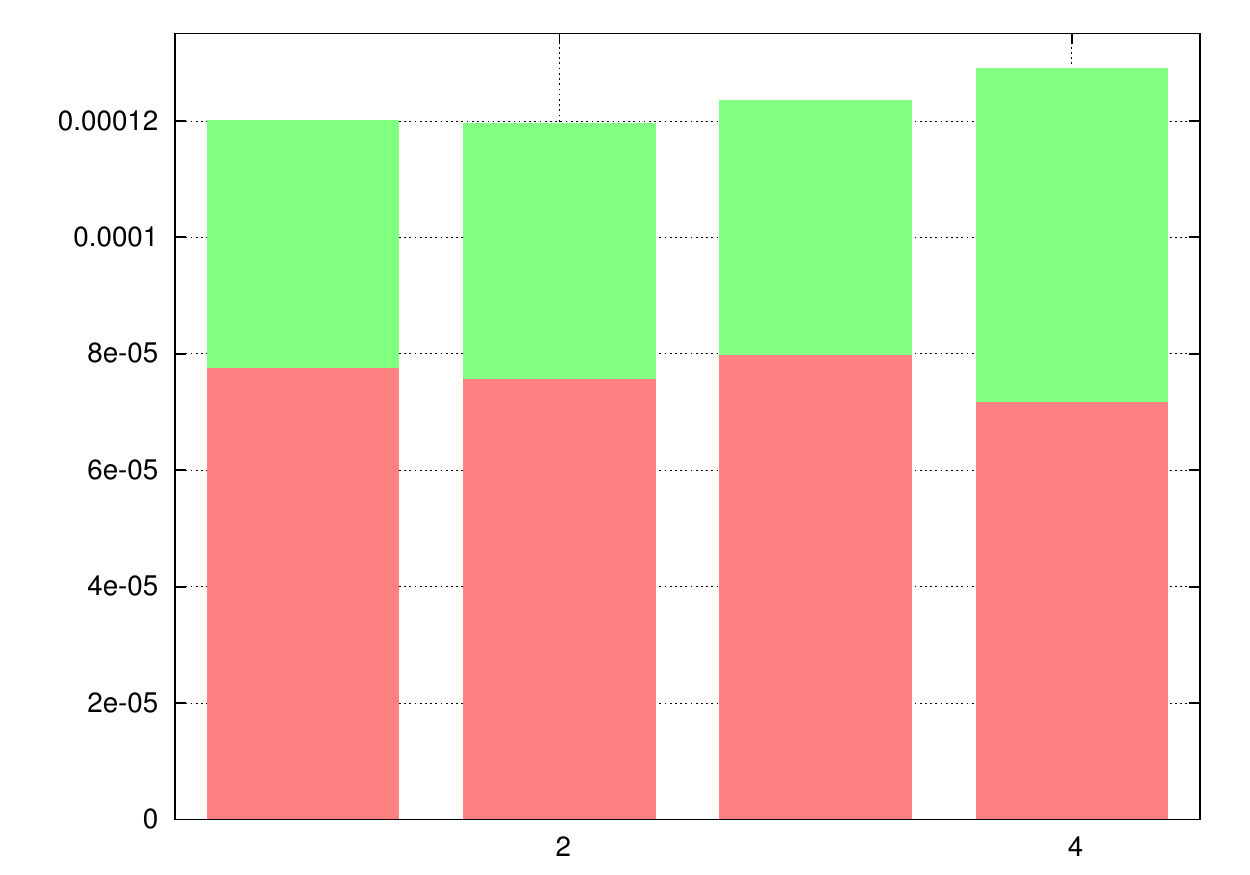}}
\subfigure{\includegraphics[width=0.35\textwidth]{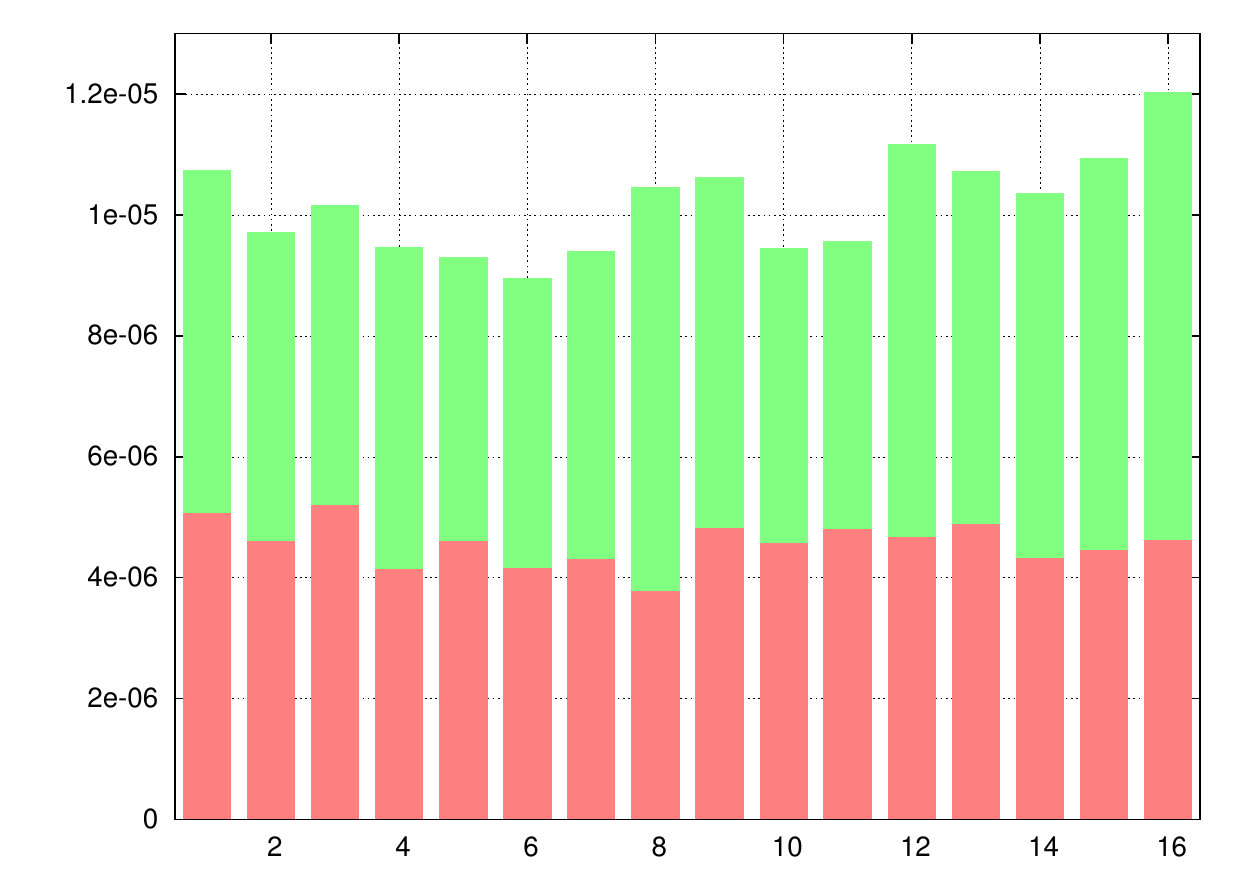}}\\[-2ex]
\caption{Energy contributions of the regularization functional 
$\int_{D} D \Phi_k: D \Phi_k +  \gamma \Delta \Phi_k \cdot \Delta \Phi_k \d x$ (red) and 
the matching functional $\frac{1}{\delta} \int_{D} |\tilde U_k \circ \Phi_k-U_{k-1}|^2 \d x$ (green) for the discrete geodesic path in 
 Figure \ref{figure:womenExample} with $K=4$ (left) and $K=16$ (right).}
\label{figure:womenExamplePlot}
\end{center}
\end{figure}

Finally, we consider time discrete geodesic paths in the space of color images. To this end, we take into account a straightforward generalization of the model for scalar (gray) valued image maps to vector-valued image maps. One can even enhance the model with further channels. Such additional channels  can represent segmented regions of the images, which one would like to ensure to be properly matched by transport and not by blending of intensities.
The only required modification of the method is that $|\image_{k+1} \circ \phi_{k+1}-\image_k|$ is now the Euclidean norm of the (extended) color vector. As an application, we considered the metamorphosis between two self-portraits by van Gogh (see Figure \ref{figure:vanGogh}) 
\footnote{both paintings by V. van Gogh (public domain, \url{http://en.wikipedia.org/wiki/File:SelbstPortrait_VG2.jpg},
\url{http://upload.wikimedia.org/wikipedia/commons/7/71/Vincent_Willem_van_Gogh_102.jpg})}.
Since the background colors of both self-portraits differ considerably in the RGB color space, we adjusted the background color of one of the images (\ie replacing $\tilde\image_B$ by $\image_B^{RGB}$ in Figure \ref{figure:vanGogh}). 
In this application, a fourth (segmentation) channel 
is used to ensure the proper 
\begin{wrapfigure}{r}{0.3\textwidth}
\vspace{-4ex}
\begin{center}
\includegraphics[width=0.8\linewidth]{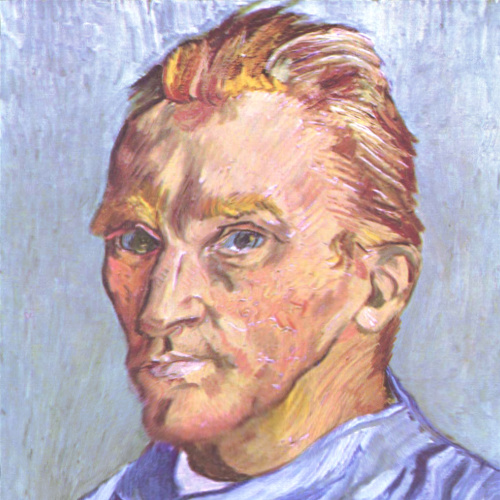}
\end{center}
\vspace{-4.5ex}
\caption{Pullback $\image_B^{RGB}\circ \Phi$ of image map $\image_B^{RGB}$ along the path.}
\vspace{-2ex}
\label{figure:vanGoghPullback}
\end{wrapfigure}
matching of the ears and the clothing.
The time-discrete geodesic path for the van Gogh self-portraits is shown 
in Figure \ref{figure:vanGoghGeodesicPath}
for $K=8$ along with the temporal change of the fourth channel. Again, we used the simplified model with parameters $\gamma=10^{-3}$ and $\delta=10^{-2}$.
Figure \ref{figure:vanGoghPullback} depicts the pullback $\image_B^{RGB}\circ\Phi$ along the flow induced deformation  
$\Phi=\Phi_K\circ\Phi_{K-1}\circ\ldots\circ\Phi_1$ corresponding
 to the geodesics in Figure \ref{figure:vanGoghGeodesicPath}.
Finally, Figure \ref{vzComponentVanGogh} visualizes the deformations and the corresponding accumulated weak material derivative along the discrete geodesic path. The color wheel on the lower left in the first row indicates both the direction and the magnitude of the discrete velocities $K (\Phi_{k}-\Id)$. Obviously, the motion field is not constant in time. Furthermore, to visualize the change of the image intensity along motion paths, the accumulated weak material derivative $Z_l$ ($l=1,\ldots,8$) with
$Z_l=K\sum_{k=1}^l (U_{k}\circ\Phi_{k}-U_{k-1})\circ X_{k-1}$ using the notation \eqref{eq:Xk}
is plotted using an equal rescaling for all $l$.
\begin{figure}
\begin{center}
\resizebox{\linewidth}{!}{
\begin{tikzpicture}
\node (label) at (0,0){\includegraphics[width=0.17\textwidth]{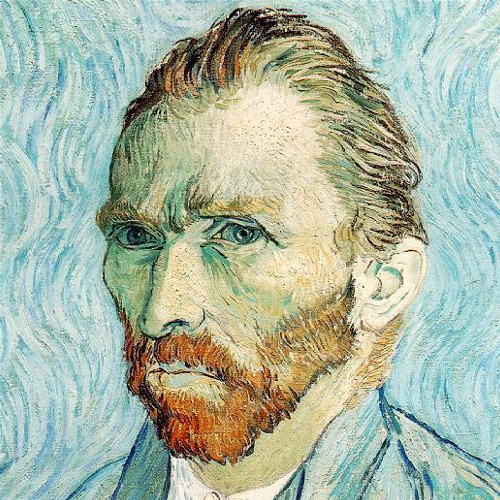}};
\node (label) at (0,-1.8) {$\image_A^{RGB}$};
\node (label) at (2.8,0){\includegraphics[width=0.17\textwidth]{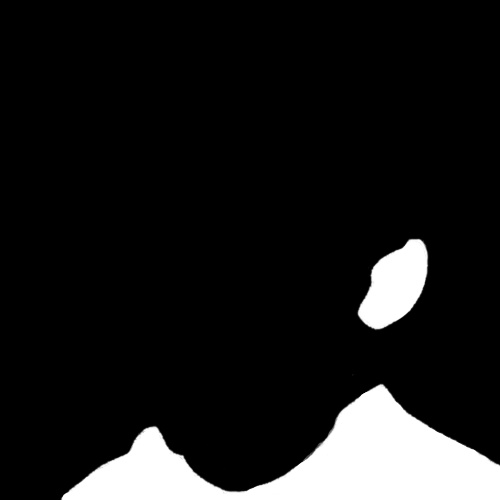}};
\node (label) at (2.8,-1.8) {$\image_A^{S}$};
\node (label) at (6,0){\includegraphics[width=0.17\textwidth]{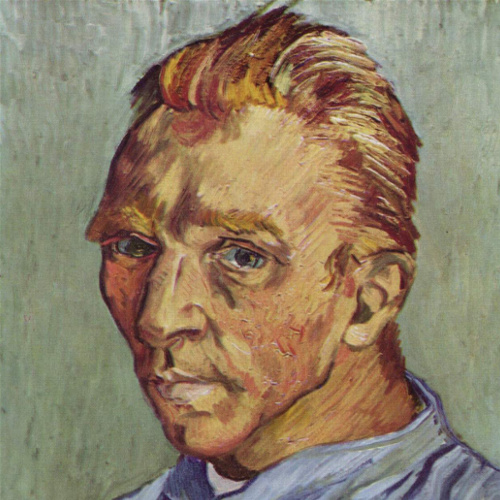}};
\node (label) at (6.0,-1.8) {$\tilde\image_B^{RGB}$};
\node (label) at (8.8,0){\includegraphics[width=0.17\textwidth]{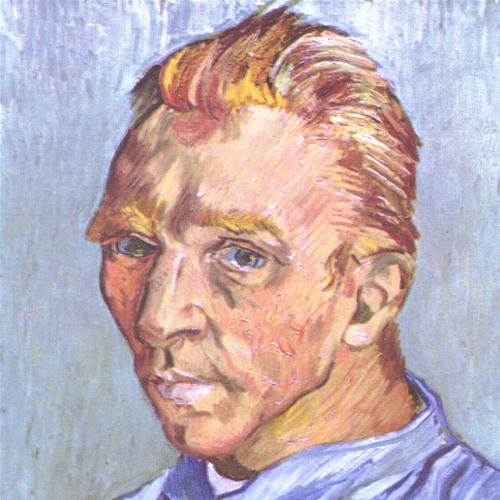}};
\node (label) at (8.8,-1.8) {$\image_B^{RGB}$};
\node (label) at (11.6,0){\includegraphics[width=0.17\textwidth]{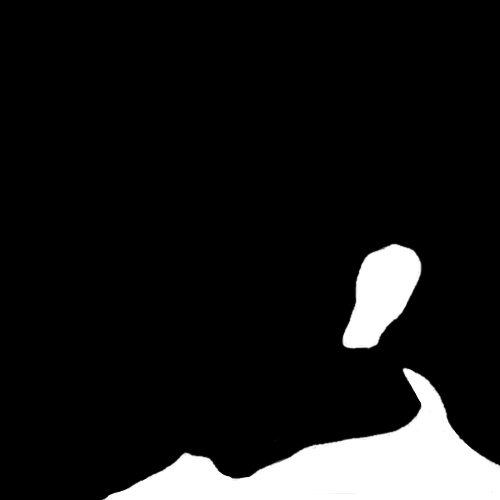}};
\node (label) at (11.6,-1.8) {$\image_B^{S}$};
\end{tikzpicture}
}
\caption{Original van Gogh self-portraits $u^{RGB}_A$, $\tilde u^{RGB}_B$ and the background modulated input image $u^{RGB}_B$ together with the associated fourth channel segmentations $u^{S}_A$ and $u^{S}_B$.}
\label{figure:vanGogh}
\end{center}
\end{figure}

\begin{figure}
\subfigure{\includegraphics[width=0.32\textwidth]{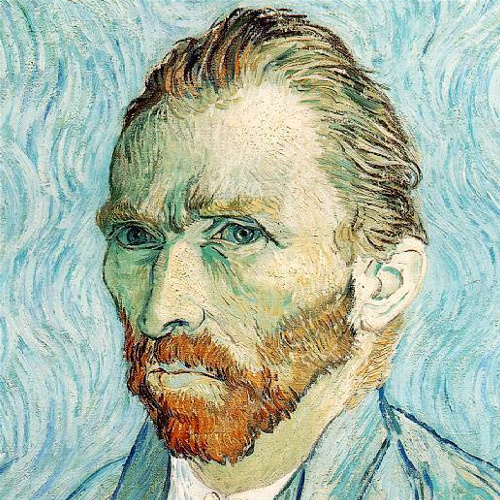}}
\hfill
\subfigure{\includegraphics[width=0.32\textwidth]{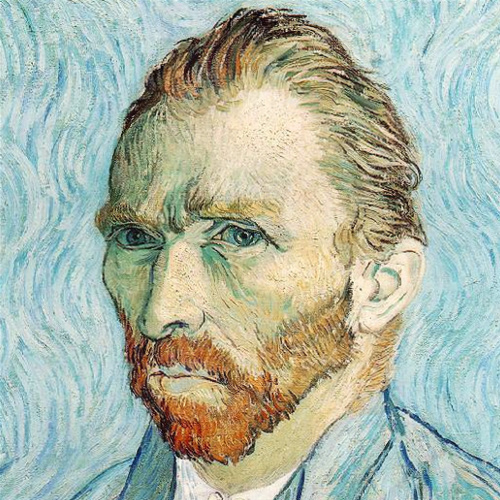}}
\hfill
\subfigure{\includegraphics[width=0.32\textwidth]{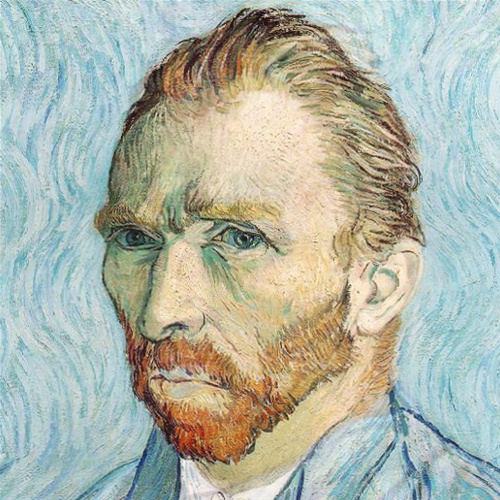}}
\subfigure{\includegraphics[width=0.32\textwidth]{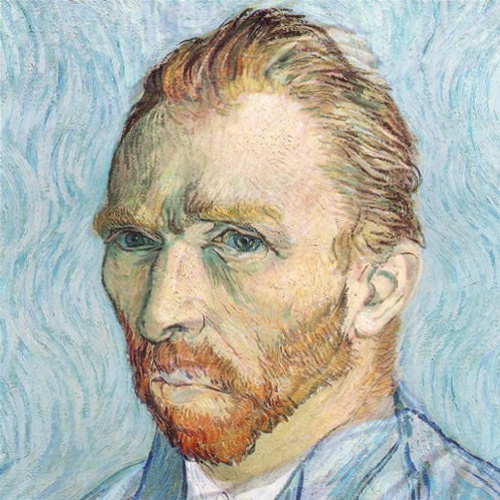}}
\hfill
\subfigure{\includegraphics[width=0.32\textwidth]{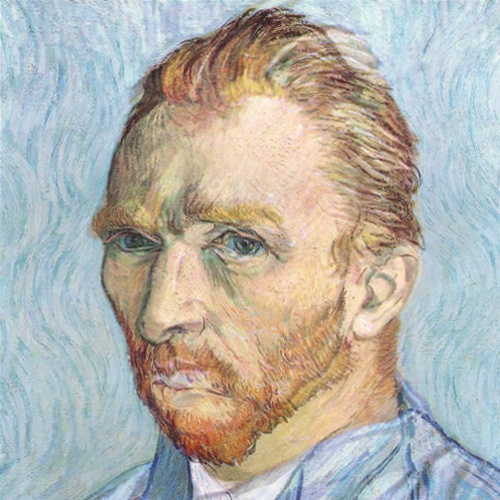}}
\hfill
\subfigure{\includegraphics[width=0.32\textwidth]{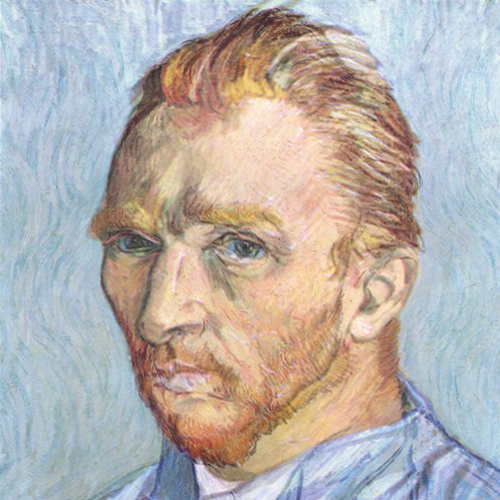}}
\subfigure{\includegraphics[width=0.32\textwidth]{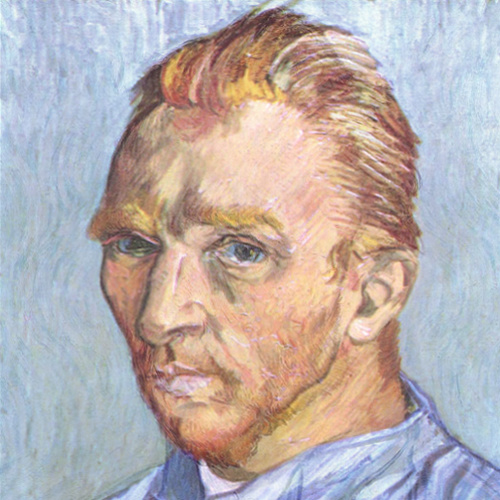}}
\hfill
\subfigure{\includegraphics[width=0.32\textwidth]{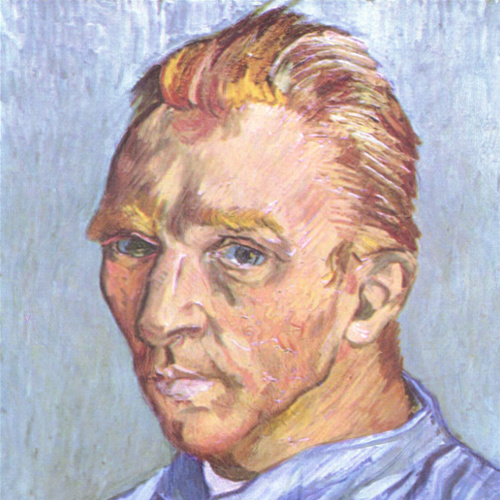}}
\hfill
\subfigure{\includegraphics[width=0.32\textwidth]{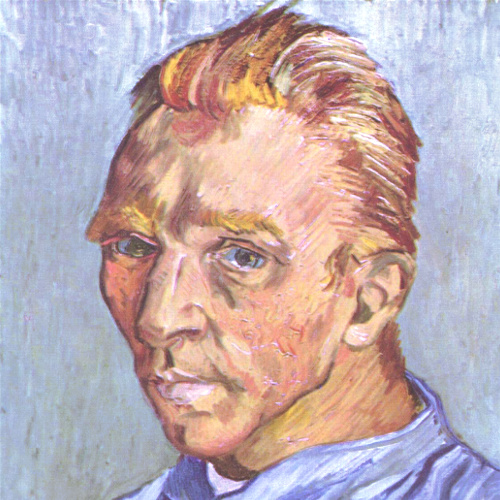}}
\hspace*{-0.01\linewidth}
\resizebox{1.01\linewidth}{!}{
\subfigure{\includegraphics[width=0.10\textwidth]{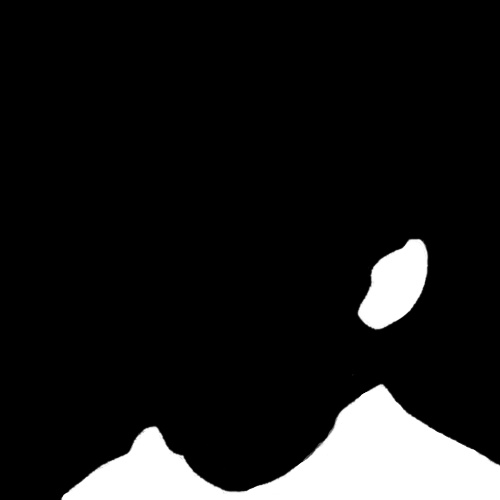}}
\hfill
\subfigure{\includegraphics[width=0.10\textwidth]{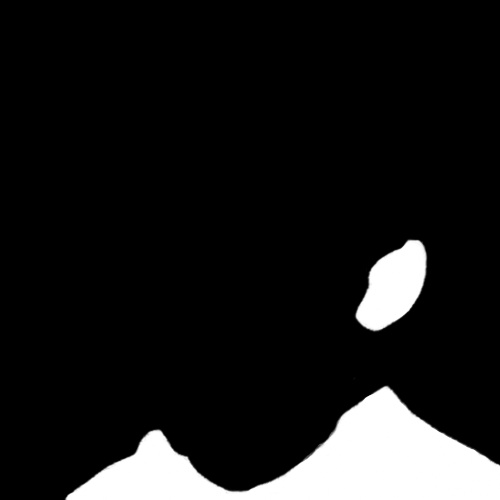}}
\hfill
\subfigure{\includegraphics[width=0.10\textwidth]{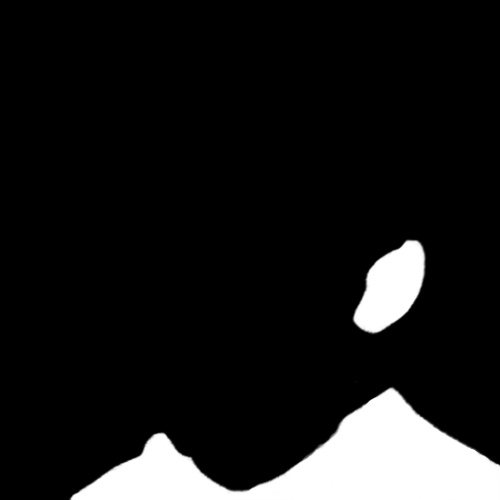}}
\hfill
\subfigure{\includegraphics[width=0.10\textwidth]{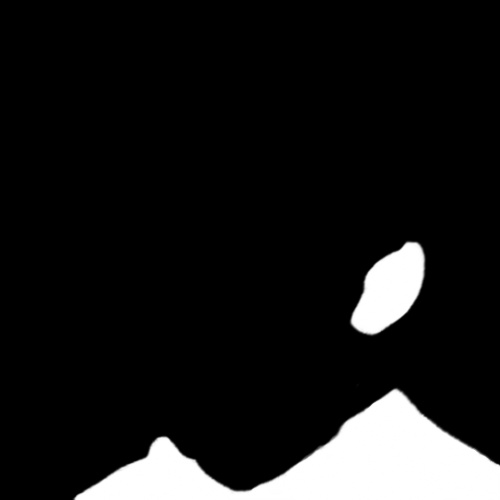}}
\hfill
\subfigure{\includegraphics[width=0.10\textwidth]{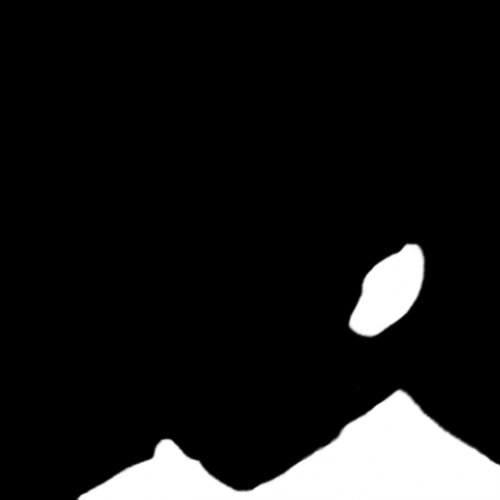}}
\hfill
\subfigure{\includegraphics[width=0.10\textwidth]{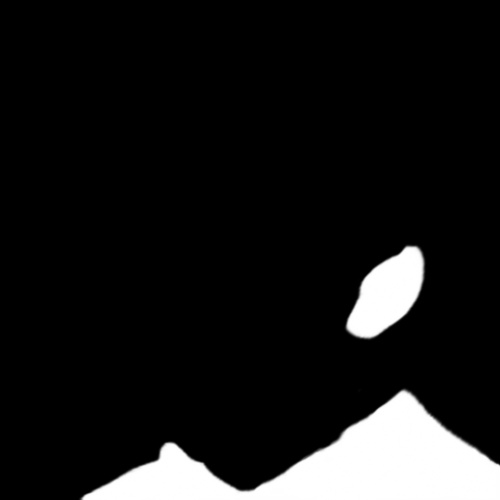}}
\hfill
\subfigure{\includegraphics[width=0.10\textwidth]{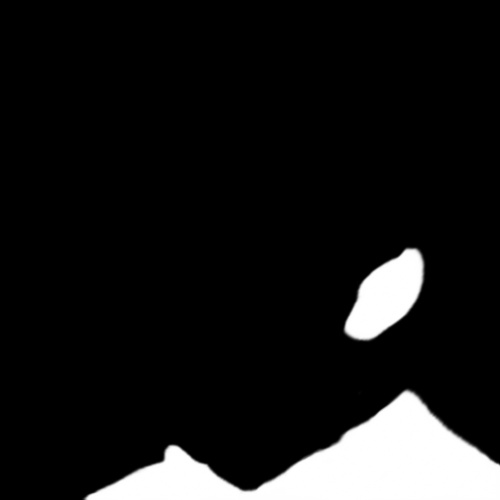}}
\hfill
\subfigure{\includegraphics[width=0.10\textwidth]{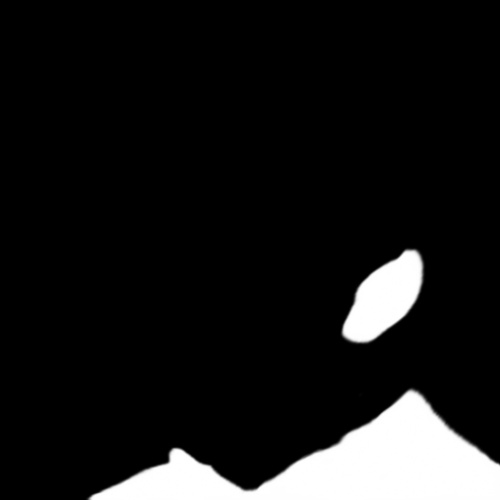}}
\hfill
\subfigure{\includegraphics[width=0.10\textwidth]{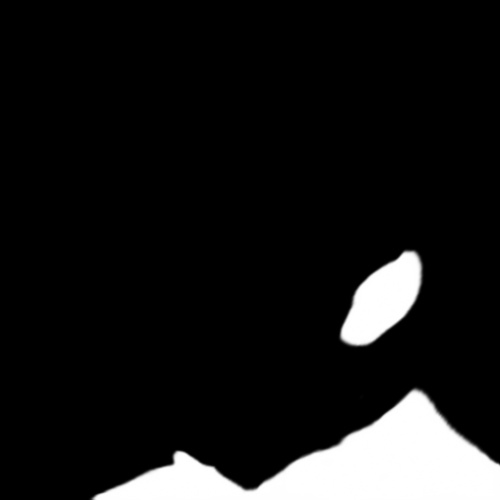}}
}
\caption{Metamorphosis between two ``van Gogh self-portraits'' using the energy \eqref{eq:WEnerDefinitionsimple}
for $K=8$ and $\delta=10^{-2}$ 
including the fourth (segmentation) channel (bottom row).}
\label{figure:vanGoghGeodesicPath}
\end{figure}

\begin{figure}
\resizebox{\linewidth}{!}{
\subfigure{\includegraphics[width=0.11\textwidth]{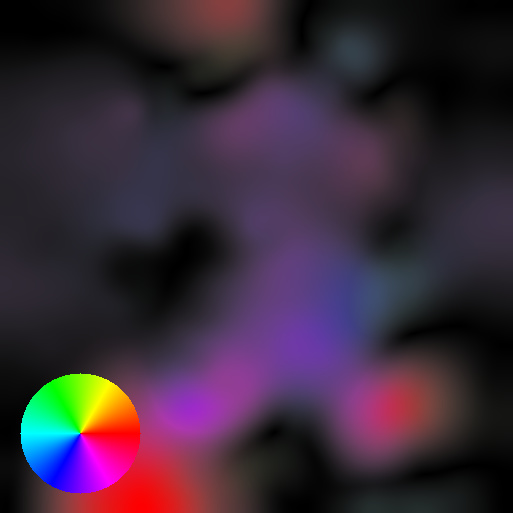}}
\hfill
\subfigure{\includegraphics[width=0.11\textwidth]{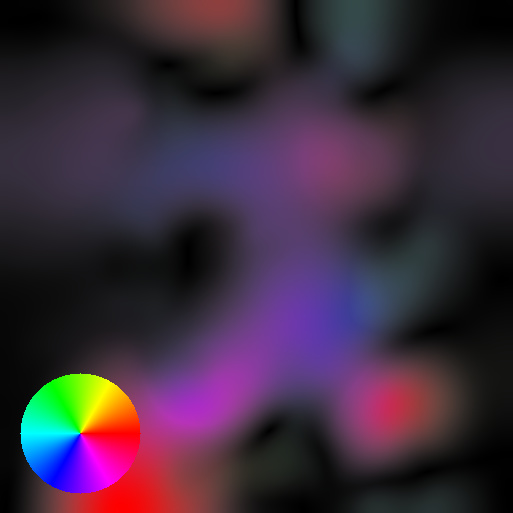}}
\hfill
\subfigure{\includegraphics[width=0.11\textwidth]{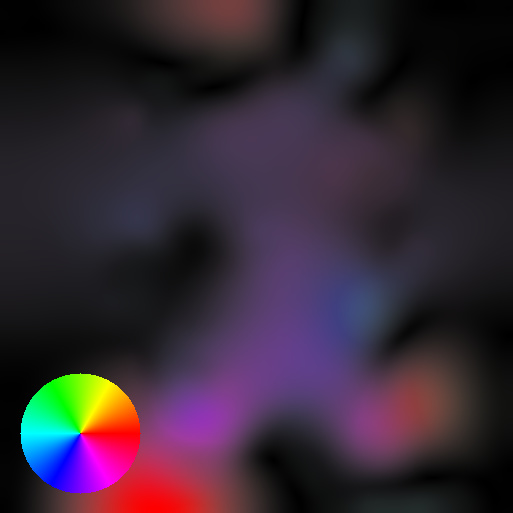}}
\hfill
\subfigure{\includegraphics[width=0.11\textwidth]{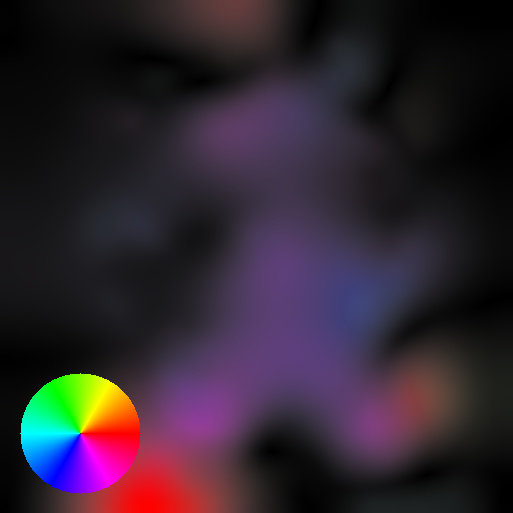}}
\hfill
\subfigure{\includegraphics[width=0.11\textwidth]{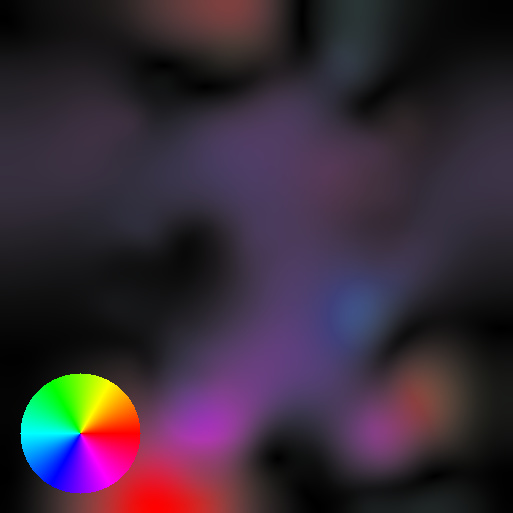}}
\hfill
\subfigure{\includegraphics[width=0.11\textwidth]{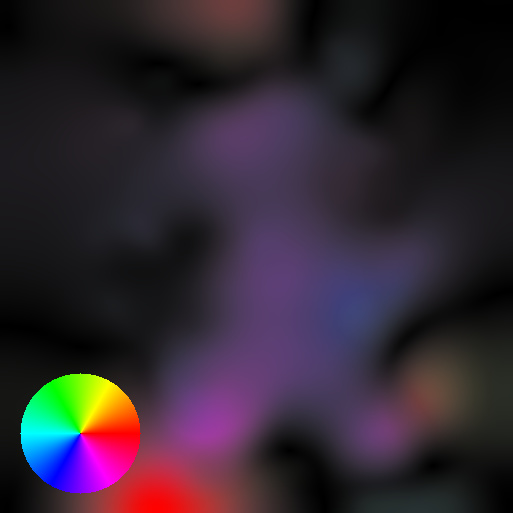}}
\hfill
\subfigure{\includegraphics[width=0.11\textwidth]{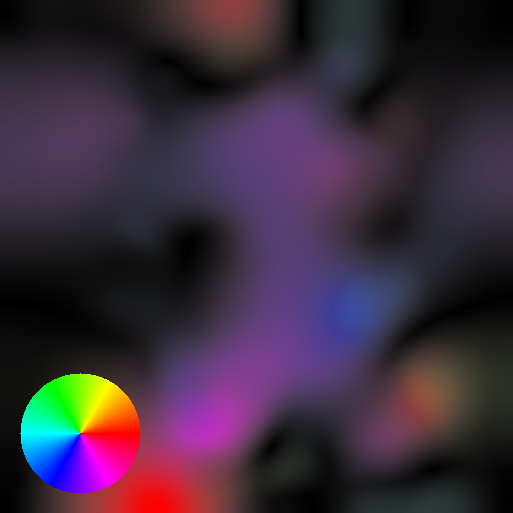}}
\hfill
\subfigure{\includegraphics[width=0.11\textwidth]{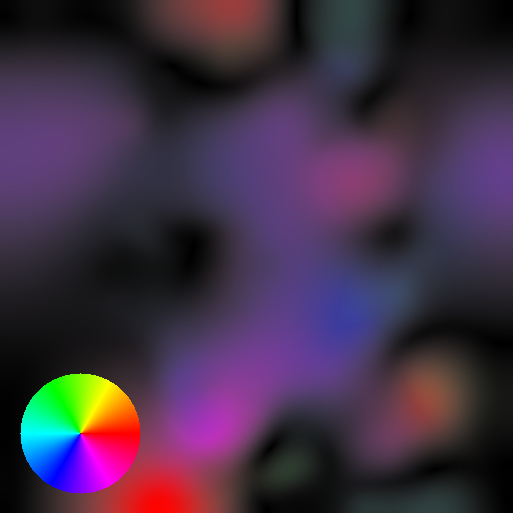}}
}
\resizebox{1.0\linewidth}{!}{
\subfigure{\includegraphics[width=0.11\textwidth]{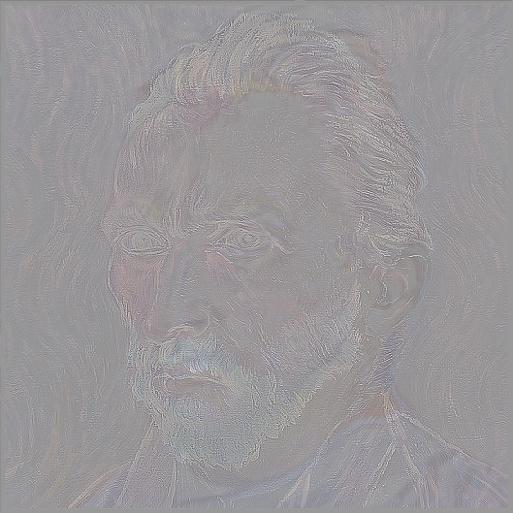}}
\hfill
\subfigure{\includegraphics[width=0.11\textwidth]{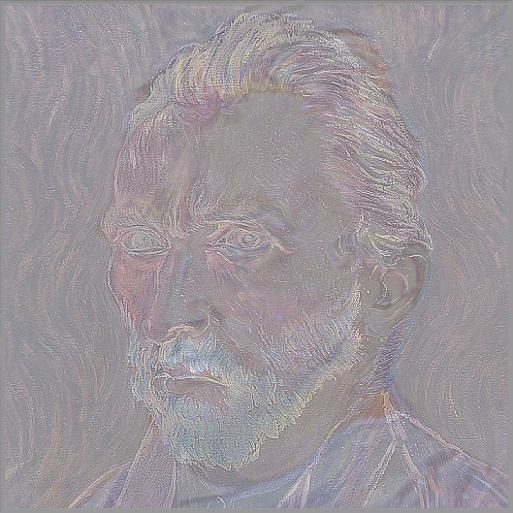}}
\hfill
\subfigure{\includegraphics[width=0.11\textwidth]{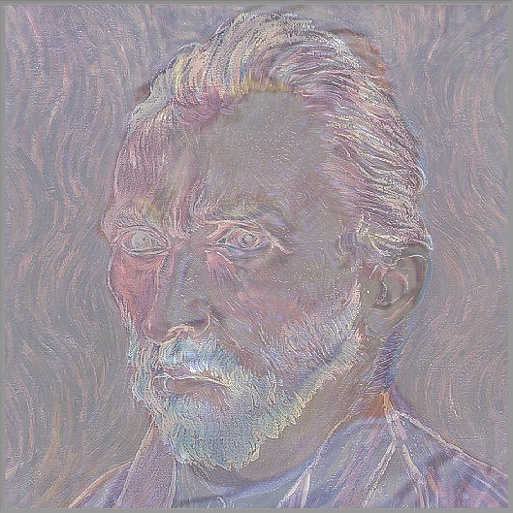}}
\hfill
\subfigure{\includegraphics[width=0.11\textwidth]{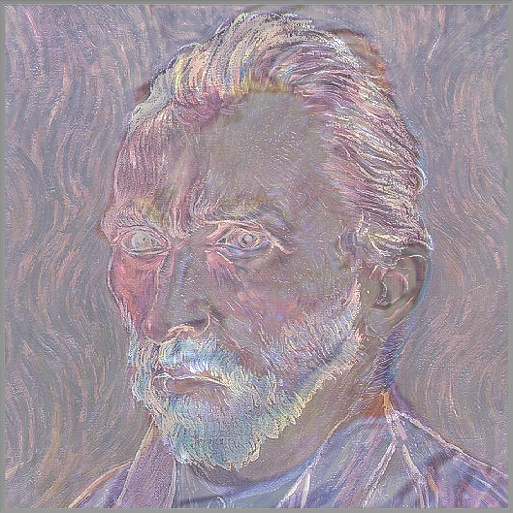}}
\hfill
\subfigure{\includegraphics[width=0.11\textwidth]{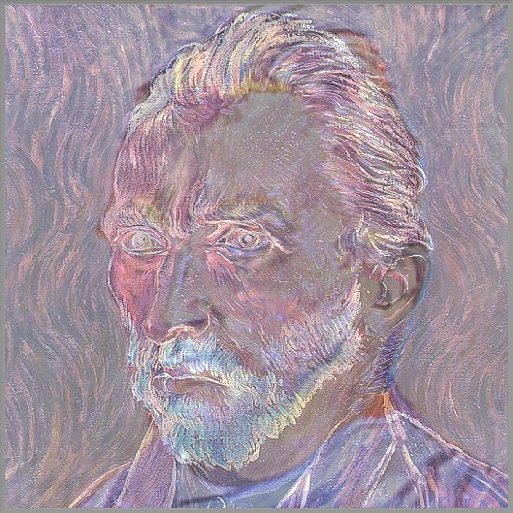}}
\hfill
\subfigure{\includegraphics[width=0.11\textwidth]{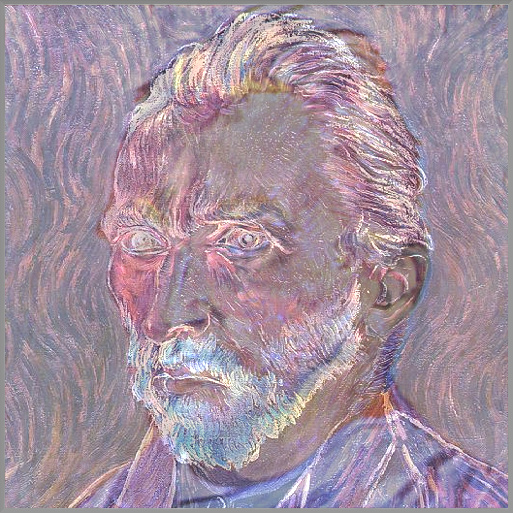}}
\hfill
\subfigure{\includegraphics[width=0.11\textwidth]{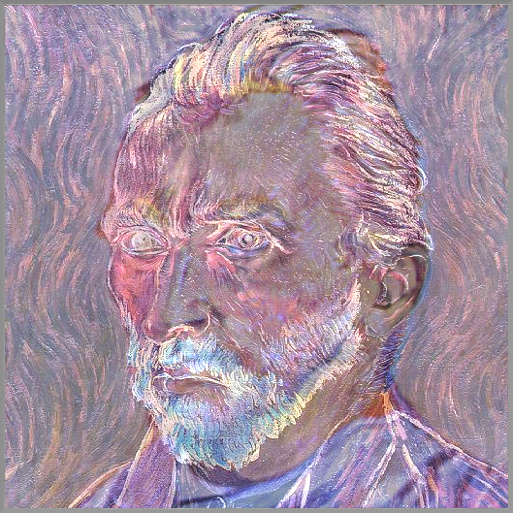}}
\hfill
\subfigure{\includegraphics[width=0.11\textwidth]{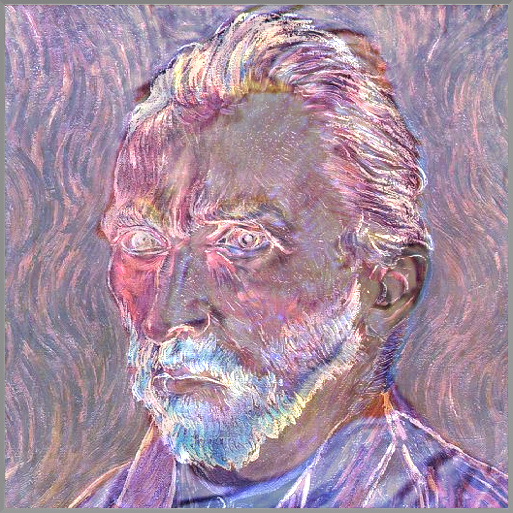}}
}
\caption{Discrete motion fields $K(\Phi_k-1)$  (first row) and accumulated weak material derivative $Z_l$ (second row) for $k= 1,\ldots, 9$. }
\label{vzComponentVanGogh}
\end{figure}

\section{Conclusions and outlook}
We have developed a robust and effective time discrete approximation for the metamorphosis approach to compute shortest paths in the space of images.
Thereby, the underlying discrete path energy is a sum of classical image matching functionals. 
The approach allows for edge type singularities in the input images. We have proven existence of 
minimizers of the discrete path energy and convergence 
of minimizing discrete paths to a continuous path, which minimizes the continuous path energy.
This analysis is based on a combination of the variational perspective of (discrete) geodesics as minimizers of the continuous \eqref{eq:DefinitionPathenergy} and discrete path energy \eqref{eq:pathenergy}, respectively,
with the continuous (\eqref{eq:ODE}, \eqref{eq:Int}) and discrete flow perspective  (\eqref{eq:discreteODE}, \eqref{eq:discreteInt}). 
In particular, this combination is the basis of a compensated compactness argument for the weak material derivative.
Indeed, using the flow perspective \eqref{eq:Int}, we are able to compensate for the loss of compactness in time, when 
trying to pass to the limit in the weak definition of the discrete material derivative \eqref{eq:weakmaterial}.
Using a finite element ansatz for the spatial discretization, a numerical algorithm 
has been presented to compute discrete geodesic paths. Qualitative properties of the algorithm are discussed for three different examples including an application to multi-channel images.
Particularly interesting future research directions are 
\begin{itemize}
\item[-] the use of duality techniques in PDE constraint optimization to derive a Newton type scheme for the simultaneous optimization of the set of deformations and the set of images associated with the discrete path,
\item[-] a full-fledged discrete geodesic calculus based on the general procedure developed in \cite{RuWi12, RuWi12b} and including a discrete logarithmic map, a discrete exponential map, and a discrete parallel transport, and
\item[-] a concept for discrete geodesic regression and geometric, statistical analysis in the space of images.
\end{itemize}
Furthermore, the close connection to optimal transportation offers interesting perspectives, which should be exploited. 

\section*{Acknowledgements}
The authors acknowledge support of the Hausdorff Center for Mathematics, the Bonn International Graduate School in Mathematics and the Collaborative Research Centre 1060  funded by the German Science foundation. B. Berkels was funded in part by the Excellence Initiative of the German Federal and State Governments.

{\small
\bibliographystyle{alpha}
\bibliography{Bibtex/all,Bibtex/library,Bibtex/own}
}

\end{document}